\newtheorem{theorem}{Theorem}[section]
\newtheorem{lemma}[theorem]{Lemma}
\newtheorem{prop}[theorem]{Proposition}
\newcommand{\PPP}{\mathbb{P}}
\newcommand{\E}{\mathbb{E}}
\newcommand{\R}{\mathbb{R}}
\newcommand{\Z}{\mathbb{Z}}
\newcommand{\TT}{{\mathcal T}}
\newcommand{\DD}{{\mathcal D}}
\newcommand{\HH}{{\mathcal H}}
\newcommand{\JJ}{{\mathcal J}}
\newcommand{\EE}{{\mathcal E}}
\newcommand{\GG}{{\mathcal G}}
\newcommand{\KK}{{\mathcal K}}
\newcommand{\PP}{{\mathcal P}}
\newcommand{\RR}{{\mathcal R}}
\newcommand{\VV}{{\mathcal V}}
\newcommand{\bone}{{\mathbf1}}
\newcommand{\ppf}{\mathcal{H}}
\newcommand{\WW}{\mathcal{W}}
\newcommand{\NN}{\mathcal{N}}
\newcommand{\tree}{\mathbb{T}}
\newcommand{\V}{t}
\newcommand{\wh}{\widehat}
\newcommand{\wt}{\widetilde}
\newcommand{\var}{\operatorname{Var}}
\newcommand{\ol}{\overline}
\begin{document}
\begin{frontmatter}

\title{Markov processes on time-like graphs}
\runtitle{Markov processes on time-like graphs}

\begin{aug}
\author[A]{\fnms{Krzysztof} \snm{Burdzy}\corref{}\thanksref{t1}\ead[label=e1]{burdzy@math.washington.edu}} and
\author[A]{\fnms{Soumik} \snm{Pal}\thanksref{t2}\ead[label=e2]{soumik@math.washington.edu}}
\runauthor{K. Burdzy and S. Pal}
\affiliation{University of Washington}
\address[A]{Department of Mathematics\\
University of Washington\\
Box 354350\\
Seattle, Washington 98195\\
USA\\
\printead{e1}\\
\phantom{E-mail: }\printead*{e2}} 
\end{aug}

\thankstext{t1}{Supported in part by NSF Grant DMS-09-06743 and by
Grant N N201 397137, MNiSW, Poland.}
\thankstext{t2}{Supported partly by NSF Grant DMS-10-07563.}

\received{\smonth{12} \syear{2009}}
\revised{\smonth{4} \syear{2010}}

%
\begin{abstract}
We study Markov processes where the ``time'' parameter is
replaced by paths in a directed graph from an initial vertex to
a terminal one. Along each directed path the process is Markov
and has the same distribution as the one along any other
directed path. If two directed paths do not interact, in a
suitable sense, then the distributions of the processes on the
two paths are conditionally independent, given their values at
the common endpoint of the two paths. Conditions on graphs that
support such processes (e.g., hexagonal lattice) are
established. Next we analyze a particularly suitable family of
Markov processes, called harnesses, which includes Brownian
motion and other L\'evy processes, on such time-like graphs.
Finally we investigate continuum limits of harnesses on a
sequence of time-like graphs that admits a limit in a suitable
sense.
\end{abstract}

%
\begin{keyword}[class=AMS]
\kwd{60J60}
\kwd{60G60}
\kwd{60J99}.
\end{keyword}
\begin{keyword}
\kwd{Harness}
\kwd{graphical Markov model}
\kwd{time-like graphs}.
\end{keyword}

\end{frontmatter}

\section{Introduction}

Classical stochastic processes are families of random variables
$\{X_t, t \in\TT\}$, where $\TT$ is a subset of $\R$, for
example, positive integers or $[0,\infty)$. A notable exception
is the family of Gaussian processes, for which the structure of
the parameter space $\TT$ can be virtually arbitrary. For the
other two most popular families of stochastic processes, that
is, Markov processes and martingales, the situation is much more
complicated. The theories of Markov processes and martingales
with the parameter set $\TT$ equal to an orthant in $\R^d$ are
hard, less developed, less popular and less frequently applied
than the original theories with one-dimensional $\TT$. A~book by
Khoshnevisan \cite{K} is an excellent monograph devoted to this
field of stochastic processes.

This article introduces a class of stochastic processes where
the ``time'' parameter has been replaced by paths in a directed
graph. Our goal is to construct a time structure that matches
the Markov property better than other nonrectilinear time sets
known in the literature. A number of models with tree-like time
parameter sets have been studied, for example, branching
Brownian motion (see \cite{E}, Section~1.1), and its much more
complex version known as Le Gall's Brownian snake (see
\cite{E}, Section 3.6). Other examples include the Brownian web
and the Brownian net (see \cite{FINR,STW,SS}). In all these
models, stochastic processes are defined, in a sense, on random
graphs. In contrast, we will be concerned with a \textit{deterministic}
parameter space.

In statistics, graphical models are widely used (see
\cite{GHKM,L}) in areas such as Bayesian data analysis,
modeling of causal relationship, information retrieval and
language processing. These are probability distributions of a
countable number of random variables indexed by the vertices of
a graph where the graph structure induces a set of conditional
independence constraints (called the graphical Markov property).
One can think of our models as a continuous time analogue of
such discrete graphical models where the classical Markov
property is preserved.

Informally speaking, the following describes our set-up.
Consider the law of a classical Markov process $\PP$ in the
interval $[0,1]$. Consider a finite graph with two distinguished
vertices marked ``$0$'' and ``$1$,'' and every other vertex is
labeled by a real number between zero and one. Consider the
collection of all paths (i.e., a sequence of vertices) starting
at $0$ and ending at $1$ such that successive vertices are
increasing and share an edge in the graph. Such paths can be
seen as homeomorphic images of the unit interval $[0,1]$. Thus
every such path indexes a copy of the Markov process with law
$\PP$. We require the additional constraint that the process is
defined uniquely at every vertex.

Barring trivial example, it is not easy to even claim that such
processes exist. In fact, the existence and uniqueness of the
process depends critically on the structure of the underlying
graph. In Section~\ref{sec:TLG} we define a collection of
graphs which support such stochastic processes. We call these
time-like graphs with no co-terminal cells. In Section
\ref{sec:mkovconstruction} we construct ``natural Markov process
on a time-like graph'' and prove its uniqueness in law. In
Section~\ref{sec:BM} we provide examples of graphs that do not
satisfy our conditions and do not support a ``natural Brownian
motion.''

In the rest of the sections we focus on a class of laws $\PP$
which are called harnesses. Harnesses, defined in Section
\ref{sec:harn}, include all integrable L\'evy processes and
their corresponding bridges. The final Section~\ref{sec:hcomb},
is devoted to Brownian motion on the honeycomb graph, and its
limit as the diameter of hexagonal cells goes to zero.

\section{Time-like graphs}\label{sec:TLG}

Intuitively speaking, a time-like graph is a directed graph with
Jordan arcs as edges. We will first consider graphs with finite
numbers of vertices and edges. We will generalize our
definitions to infinite graphs at the end of Section
\ref{sec:NCC}.
\begin{definition}
A graph $\GG=(\VV,\EE)$ will be called a \textit{time-like graph}
(TLG) if its sets of vertices $\VV$ and edges $\EE$ satisfy the
following properties.

The set $\VV$ contains at least two elements, $\VV= \{\V_0,
\V_1,\ldots, \V_N\}$, where $\V_0=0$, $\V_N=1$, $\V_k \in(0,1)$
for $k=1,\ldots, N-1$ and $\V_k \leq\V_{k+1}$ for $k=0,\ldots,
N-1$. We do not exclude the case $\VV= \{\V_0=0, \V_N=1\}$.

Formally, we should say that elements of $\VV$ have the form
$(k,t_k)$, so that $(k,t_k)$ and $(k+1,t_{k+1})$ are distinct
even if $t_k = t_{k+1}$. This would make the notation very
complicated, so we will write $t_k$ instead of $(k,t_k)$. This
should not cause any confusion.

An edge between $\V_j$ and $\V_k$ will be denoted $E_{jk}$. We
assume that there is no edge between $t_j$ and $t_k$ if
$t_j=t_k$. In particular, a TLG has no loops, that is, edges of
the form $E_{jj}$. By convention, the notation $E_{jk}$
indicates that $\V_j< \V_k$. We assume that the extreme
vertices, $\V_0$ and $\V_N$, have degree 1, and all other
vertices $\V_k$ have degree~3. We assume that for every vertex
$\V_k$, except for $\V_0$ and $\V_N$, there exist edges $E_{jk}$
and $E_{kn}$ with $j< k<n$.

If there is a unique edge between $\V_j$ and $\V_k$, $j< k$,
then it will be denoted $E_{jk}$. If there are two edges between
$\V_j$ and $\V_k$, $j< k$, then they will be denoted $E'_{jk}$
and $E''_{jk}$. We will write $E_{jk}$ to refer to one of the
edges $E'_{jk}$ and $E''_{jk}$ when it is irrelevant which of
the two edges is used.
\end{definition}

See Figures~\ref{fig1},~\ref{fig2} and~\ref{fig4} below for examples
of TLGs. We will next define a ``representation'' of a TLG,
that is, a convenient geometric way to think about such a graph.
The choice of space for the representation is not significant.
We will limit ourselves to representations in $\R^3$ because it
is easy to see that every TLG has a representation in $\R^3$.
\begin{definition}\label{a11.1}
By abuse of notation, let $E_{jk}\dvtx[\V_j , \V_k] \to\R^2$
denote a continuous function. Assume that the images of the open
sets $(\V_j, \V_k)$ under the maps $t\to(t,E_{jk}(t))\in\R^3$,
where $E_{jk} \in\EE$, are disjoint. Suppose that $E_{jk}(t_k)
= E_{kn}(t_k)$ if $E_{jk}, E_{kn} \in\EE$, and $E_{jk}(t_k) =
E_{nk}(t_k)$ if $E_{jk}, E_{nk} \in\EE$. We will call the set
$\RR(\GG)=\{(t,E_{jk}(t)) \in[0,1] \times\R^2\dvtx E_{jk}\in\EE,
t\in[\V_j , \V_k]\}$ a \textit{representation} of $\GG$. We will
say that $\GG_1$ is a subgraph of $\GG_2$ and write $\GG_1 \subset
\GG_2$ if there exist representations of the two TLGs such that
$\RR(\GG_1) \subset\RR(\GG_2)$. We will call $\GG$ a
\textit{planar} TLG if it has a representation $\RR(\GG) \subset
\R^2$.
\end{definition}
\begin{remark} There are many representations for a given TLG,
but there is a unique TLG corresponding to a given
representation.
\end{remark}
\begin{definition}\label{def:2.4}
We will call a sequence of edges $(E_{k_1k_2}, E_{k_2k_3}
,\ldots,\break
E_{k_{n-1}k_n})$ a \textit{time path} if $E_{k_j, k_{j+1}}
\in\EE$ for every $j$; note that according to our conventions,
$k_1 < k_2 <\cdots< k_n$. We will write $\sigma(k_1, k_2,\ldots, k_n) $
to denote a time path $(E_{k_1k_2}, E_{k_2k_3} ,\ldots,
E_{k_{n-1}k_n})$. A time path $\sigma(k_1, k_2,\ldots,
k_n) $ will be called a \textit{full time path} if $k_1=0$ and
$k_n = N$.

Let $\ol t_j = (t_j , E_{jk}(t_j))$ for $j<N$ and $\ol t_N =
(t_N, E_{N-1,N}(t_N))$. Note that the definition does not depend
on the choice of $k$.
\end{definition}
\begin{remark} (i) Note that for every $k \in\{0,\ldots,N\}$,
there exists at least one full time path $\sigma(k_1, k_2,\ldots, k_n)
$ such that $k_m = k$ for some $m$. This follows
easily from the assumption that for every vertex $\V_k$, except
for $\V_0$ and $\V_N$, there exist edges $E_{jk}$ and $E_{kn}$
with $j< k<n$.

\mbox{}\hphantom{i}(ii) If $\GG=(\VV,\EE) \subset\GG_1=(\VV_1,\EE_1)$ and $\V_j
\in\VV_1$, then $\ol t_j \in\RR(\GG)$ does not have the same
meaning as $t_j\in\VV$.

(iii) By abuse of language, for a time path $\sigma_1=
\sigma(k_1, k_2,\ldots, k_n) $, we will call the subset $\{(t,
E_{jk}(t))\dvtx E_{jk}\in\sigma_1, t\in[\V_j , \V_k]\}$ of a
representation $\RR(\GG)$ a time path as well. If $\GG
=(\VV,\EE) \subset\GG_1=(\VV_1,\EE_1)$, $E_{jk} \in\EE_1$ and
$\{(t, E_{jk}(t)), t\in[t_j,t_k]\} \subset\RR(\GG)$, then we
will write $E_{jk} \subset\RR(\GG)$. Note that $E_{jk} \subset
\RR(\GG)$ does not imply that $E_{jk} \in\EE$.
\end{remark}

\subsection{Time-like graphs with no co-terminal cells
(NCC-graphs)} \label{sec:NCC}

We will define a subfamily of time-like graphs, with properties
that fit well with a probabilistic structure, to be presented
later. We will give two definitions of time-like graphs with no
co-terminal cells (NCC-graphs) and then we will show that the
definitions are equivalent. Each definition is more useful than
the other one in some technical arguments.
\begin{definition}\label{def:nci} (i) We will say that time
paths $\sigma(j_1, j_2,\ldots, j_{n})$ and $\sigma(k_1$, $k_2,\ldots,
k_{m})$ are \textit{co-terminal} if $j_1 = k_1$ and $j_{n}
= k_{m}$.

\mbox{}\hphantom{i}(ii) A pair of co-terminal time paths $\sigma(j_1, j_2,\ldots,
j_{n})$ and $\sigma(k_1, k_2,\ldots, k_{m})$ will be called a
\textit{cell} if $\{j_2,\ldots, j_{n-1}\} \cap\{k_2,\ldots,
k_{m-1}\} = \varnothing$. We will call $\V_{j_1}$ the \textit{start}
of the cell and $\V_{j_{n}}$ will be called the \textit{end} of
the cell.

(iii) We will call a cell $(\sigma(j_1, j_2,\ldots, j_{n}),
\sigma(k_1, k_2,\ldots, k_{m}))$ \textit{simple} if there does not
exist time path $\sigma(i_1, i_2,\ldots, i_{r}) $ such that
$i_1\in\{j_2,\ldots, j_{n-1}\}$ and $i_{r} \in\{k_2,\ldots,
k_{m-1}\}$, or $i_1\in\{k_2,\ldots, k_{m-1}\}$ and $i_{r} \in
\{j_2,\ldots, j_{n-1}\}$.

(iv) If $\V_j$ is the start of a cell, let $\V_{j^*}$ be the
smallest time $\V_k$ such that there exists a cell with the
start $\V_j$ and end $\V_k$. A cell with a start $\V_j$ and end
$\V_{j^*}$ will be called \textit{forward-minimal}. Similarly, if
$\V_k$ is the end of a cell, let $\V_{k'}$ be the largest time
$\V_j$ such that there exists a cell with the start $\V_j$ and
end $\V_k$. A cell with a start $\V_{k'}$ and end $\V_k$ will be
called \textit{backward-minimal}. We will call two cells
\textit{minimal co-terminal cells} if either they are
forward-minimal and have different starts and the same end, or
they are backward-minimal and they have different ends but the
same start.

\mbox{}\hphantom{i}(v) We will call a TLG an \textit{NCC-graph} if it does not
contain any minimal co-terminal cells.
\end{definition}
\begin{remark}\label{rem:noni} It is easy to see that if a cell
is forward-minimal or backward-minimal then it is simple. For
example, suppose that a cell $(\sigma(j_1,\break j_2,\ldots, j_{n})$,
$\sigma(k_1, k_2,\ldots, k_{m}))$ is forward-minimal,\vadjust{\goodbreak} and there is
a time path $\sigma(i_1, i_2,\ldots, i_{r}) $ such that
$i_1=j_{n_1}\in\{j_2,\ldots, j_{n-1}\}$ and $i_{r}=k_{m_1} \in
\{k_2,\ldots,\break k_{m-1}\}$. Then $\sigma(j_1,\ldots, j_{n_1},
i_2,\ldots, i_{r})$ and $ \sigma(k_1,\ldots, k_{m_1})$ form a
cell with the end $k_{m_1} < k_{m}$, contradicting the
assumption that $k_1^* = k_{m}$.
\end{remark}

The next definition, of the family of NCC$^*$-graphs, is
inductive and can be explained as follows. The simplest TLG
$\GG$, with a representation $\RR(\GG) = [0,1] \times\{0\}$, is
included in this family. If a graph $\GG_1$ already belongs to
the family of NCC$^*$-graphs, then we add a time path to
$\RR(\GG_1)$, such that the endpoints of this new path lie on a
time path already in $\RR(\GG_1)$ and neither endpoint is a
vertex already present in $\RR(\GG_1)$. Thus amended
representation corresponds to a TLG $\GG_2$ which we add to the
family of NCC$^*$-graphs.
\begin{definition}\label{def:ind}
We will define \textit{NCC$^*$-graphs} in an inductive way.

{\smallskipamount=0pt
\begin{longlist}
\item The minimal graph $\GG= (\VV, \EE)$, with $\VV= \{\V_0=0,
\V_N=1\}$ and $\EE= \{E_{0N}\}$, is an NCC$^*$-graph.

\item Suppose that a graph $\GG_1= (\VV_1, \EE_1)$ is NCC$^*$,
where $\VV_1 = \{\V_0, \V_1,\ldots,\break \V_N\}$. Suppose that $\V_j,
\V_k \notin\VV_1$, $\V_j < \V_k$, and for some $E_{j_1j_2},
E_{k_1k_2}\in\EE_1$, we have $\V_{j_1} < \V_j < \V_{j_2}$ and
$\V_{k_1} < \V_k < \V_{k_2}$. Let $\VV_2 = \VV_1 \cup\{\V_j,
\V_k\}$. Assume that there exists a time path $\sigma(m_1, m_2,\ldots,
m_n) $ such that $j_1 = m_{n_1},
j_2=m_{n_1+1},k_1=m_{n_2}$, and $k_2=m_{n_2+1} $, for some
$1\leq n_1\leq n_2 \leq n_2+1 \leq n$. If $n_1 < n_2$, then we
let $\EE_2 = (\EE_1 \cup\{E_{jk}, E_{j_1 j}, E_{jj_2},E_{k_1
k}, E_{kk_2}\}) \setminus\{E_{j_1j_2}, E_{k_1k_2}\}$. If
$n_1=n_2$, then we let $\EE_2 = (\EE_1 \cup\{E'_{jk}, E''_{jk},
E_{j_1 j}, E_{kj_2}\}) \setminus\{E_{j_1j_2}\}$. We add $\GG_2=
(\VV_2, \EE_2)$ to the family of NCC$^*$-graphs.

\item We will say that a sequence $\{\GG_j\}_{1\leq j \leq k}$
is a \textit{tower} of NCC$^*$-graphs if all graphs in the sequence
are NCC$^*$, and for every $j>1$, $\GG_j$ is constructed from
$\GG_{j-1}$ as in part (ii) of the definition.
\end{longlist}}
\end{definition}
\begin{theorem}\label{t:s25.1} \textup{(i)} A TLG is an NCC-graph if and
only if it is an NCC$^*$-graph.

\mbox{}\hphantom{i}\textup{(ii)} Every planar TLG is NCC.

\textup{(iii)} There exists a nonplanar NCC-graph.

\textup{(iv)} There exists a non-NCC-graph.
\end{theorem}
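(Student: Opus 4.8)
The plan is to prove the four parts of Theorem~\ref{t:s25.1} largely independently, with part~(i) being the substantial structural result and parts~(ii)--(iv) being shorter arguments that either invoke (i) or construct explicit examples.

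For part~(i), the equivalence of NCC and NCC$^*$ graphs, I would prove the two implications separately. For the direction ``NCC$^*$ $\Rightarrow$ NCC'', I would argue by induction on the tower construction in Definition~\ref{def:ind}. The base case $\RR(\GG)=[0,1]\times\{0\}$ trivially has no cells at all, hence no minimal co-terminal cells. For the inductive step, assuming $\GG_1$ is NCC, I must show that inserting a new time path (the two vertices $\V_j,\V_k$ and the connecting edge(s)) as prescribed cannot create a pair of minimal co-terminal cells. The key observation should be that any newly created cell must use the inserted path, and because the endpoints of the inserted path sit in the interiors of existing edges lying on a common time path $\sigma(m_1,\dots,m_n)$, any new cell is ``anchored'' to that path in a controlled way; a pair of forward-minimal cells with distinct starts and common end (or the backward-minimal mirror image) would force a configuration already forbidden in $\GG_1$. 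The harder direction, ``NCC $\Rightarrow$ NCC$^*$'', requires showing every NCC-graph is reachable by a tower. Here I would induct on the number of edges (or vertices) and show that any NCC-graph $\GG$ with more than one edge admits a ``last inserted path'': a time path whose interior vertices have degree forcing them to be removable, so that deleting it yields a smaller NCC-graph $\GG_1$ from which $\GG$ is obtained by the step in Definition~\ref{def:ind}(ii). The absence of minimal co-terminal cells is exactly what should guarantee the existence of such a removable path and prevent the deletion from violating the TLG degree conditions.

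For part~(ii), I would argue that a planar representation forces an orientation/ordering on the cells that rules out minimal co-terminal cells. Concretely, suppose for contradiction that a planar TLG contains two forward-minimal cells with distinct starts $\V_{j_1}\neq\V_{k_1}$ and common end $\V_e$. Using the planar representation $\RR(\GG)\subset\R^2$ together with the time-coordinate ordering (the first coordinate is increasing along every edge), I would derive that the four time paths bounding the two cells must cross in the interior, contradicting the disjointness of edge interiors required in Definition~\ref{a11.1}; a Jordan-curve-type argument on the planar picture should make this precise. The backward-minimal case is symmetric.

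For parts~(iii) and~(iv), I would exhibit explicit graphs. For~(iv) the task is easiest: I would write down the smallest configuration that does contain a pair of minimal co-terminal cells --- for instance two forward-minimal cells sharing their end but starting at two distinct vertices --- and verify directly from Definition~\ref{def:nci} that it is a genuine TLG (all degree and monotonicity conditions hold) yet fails the NCC condition. For~(iii) I would take a non-planar TLG, the natural candidate being a $K_5$- or $K_{3,3}$-flavored time-like graph, and verify it is NCC by checking it contains no minimal co-terminal cells (most cleanly by exhibiting it as the top of a tower, invoking part~(i)); the non-planarity would follow from a Kuratowski-type obstruction in the underlying undirected graph. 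I expect the main obstacle to be the ``NCC $\Rightarrow$ NCC$^*$'' half of part~(i): identifying a removable last path and proving that its removal both preserves the NCC property and reproduces $\GG$ via an admissible insertion step requires careful bookkeeping of the degree-3 interior-vertex condition and of how cells interact with the candidate path.
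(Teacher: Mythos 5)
Your plan for part~(i) in the direction ``NCC$^*$ $\Rightarrow$ NCC'', and your plans for parts~(ii) and~(iv), are broadly in line with the paper's proof (the paper's version of the tower induction is sharper: it shows that each vertex added in a tower step is the end of exactly one forward-minimal cell and remains so in every later graph of the tower, with time reversal handling the backward-minimal case). The genuine gap is in the direction ``NCC $\Rightarrow$ NCC$^*$''. Your approach is top-down: find a ``removable last path'' in $\GG$, delete it, and induct on the smaller NCC-graph. But the existence of such a path is precisely the hard content of this implication, and your justification --- that the absence of minimal co-terminal cells ``should guarantee'' a removable path --- is exactly what needs proof, and it fails in the naive form. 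The paper's own remarks immediately following Theorem~\ref{t:s25.1} provide the counterexamples: the graph of Fig.~\ref{fig4} is NCC, yet deleting the edges $E_{34}$ and $E_{78}$ from it produces a non-NCC graph (so deleting paths can destroy the NCC property), and there are pairs of NCC-graphs $\GG_1 \subset \GG_2$ (obtained from Fig.~\ref{fig2} by removing edges) for which re-inserting the removed edge is not a legal step of Definition~\ref{def:ind}~(ii), because its two endpoints do not lie on a common time path of the smaller graph. So a candidate ``last path'' must be chosen so that both (a) the graph after deletion is still NCC and (b) the endpoints of the deleted edge lie on a common time path of that graph; neither is automatic and your sketch gives no criterion ensuring either. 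The paper avoids this minefield by arguing bottom-up instead: it grows a tower inside $\RR(\GG)$ starting from the minimal graph, at each step selecting the \emph{time-maximal} vertex $t_{j_1}$ of $\GG$ lying in the current graph $\GG_k$ that has an edge of $\GG$ leaving $\RR(\GG_k)$, and adjoining a path of $\GG$ from $t_{j_1}$ to $t_{j_1^*}$, the end of a forward-minimal cell of $\GG$ with start $t_{j_1}$; the NCC hypothesis, combined with the maintained invariant that every vertex of $\GG_k$ with two incoming edges is the end of a forward-minimal cell of $\GG$, is what guarantees the added path meets $\RR(\GG_k)$ only at its endpoints and that $t_{j_1^*}$ is not already a two-incoming-edge vertex of $\GG_k$. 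Some such selection rule plus invariant is the missing idea; without it your induction cannot be carried out.

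A secondary point concerns part~(iii). ``Planar'' here means existence of a representation in $\R^2$ that respects the time coordinate (Definition~\ref{a11.1}), which is strictly stronger than planarity of the underlying undirected graph. The paper's example (Fig.~\ref{fig1}: a full time path on $t_0,\dots,t_7$ plus chords $E_{14}$, $E_{25}$, $E_{36}$) has underlying graph equal to $K_{3,3}$ minus one edge plus two pendant edges --- abstractly planar --- yet it admits no time-respecting planar representation. Your Kuratowski-based route is sound in principle (an underlying-graph obstruction certainly precludes a representation in $\R^2$), but it forces you to build an NCC TLG whose underlying graph genuinely contains a $K_{3,3}$ subdivision; note $K_5$ itself is impossible since non-terminal TLG vertices have degree exactly $3$. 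This can be done --- for instance, extend the Fig.~\ref{fig1} graph by one more tower step joining a new vertex inside $E_{01}$ to a new vertex inside $E_{67}$, which supplies the missing sixth branch path --- but verifying NCC then rests on part~(i), and the whole construction is heavier than the paper's, which simply exhibits a graph that fails the stronger, time-respecting notion of planarity.
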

\begin{pf}
(i) \textit{Step} 1 (\textit{NCC $\Rightarrow$ NCC$^{*}$}). Suppose
that an
NCC-graph $\GG= (\VV, \EE)$ is given. We will show how to
construct it in an inductive way.

We let $\GG_1= (\VV_1, \EE_1)$ be the minimal graph with $\VV_1
= \{\V_0=0, \V_N=1\}$ and $\EE_1= \{E_{0N}\}$. Note that $\GG_1$
is an NCC$^{*}$-graph and we can find representations for $\GG$
and $\GG_1$ such that $\RR(\GG_1) \subset\RR(\GG)$.

Suppose that an NCC$^{*}$-graph $\GG_k= (\VV_k, \EE_k)$ has been
constructed and there exist representations such that
$\RR(\GG_k) \subset\RR(\GG)$. Moreover, assume that if two
edges $E_{m_1m}$ and $E_{m_2m}$ belong to $\GG_k$, then $\V_m$ is
the end of a forward-minimal cell in $\GG$. We will prove that
there exists an NCC$^{*}$-graph $\GG_{k+1} \ne\GG_k$, such that
$\RR(\GG_k) \subset\RR(\GG_{k+1}) \subset\RR(\GG)$. We will
construct $\GG_{k+1}$ in such a way that if two edges $E_{m_1m}$
and $E_{m_2m}$ belong to $\GG_{k+1}$, then $\V_m$ is the end of a
forward-minimal cell in $\GG$. Since $\GG$ has a finite number
of edges, we must have $\RR(\GG_{k+1}) = \RR(\GG)$ for some $k$,
so all we have to do to finish the proof is to complete the
inductive step.

Suppose that $\GG_k \ne\GG$. There exists $\ol t_n \in
\RR(\GG_k)$ such that $t_n \in\VV\setminus\VV_k$ and $E_{nm}
\not\subset\RR(\GG_k)$ for some $m$, because there is at least
one edge in $\RR(\GG) \setminus\RR(\GG_k)$ that is connected to
$\RR(\GG_k)$, and it is impossible for all such edges to leave
$\RR(\GG_k)$ in the negative direction. Let $\V_{j_1}$ be the
largest $\V_n$ with this property. Let $\V_{j_1^*}$ be defined
as in Definition~\ref{def:nci}(iv), relative to $\GG$.

\textit{Case} (a). Suppose that $\ol\V_{j_1^*}\in\RR(\GG_k)$. Then
there exists a time path $\sigma_1 = \sigma(q_0, q_1,\ldots,
q_{n_1})$ in $\GG$, with $q_0=j_1$, $ q_{n_1} = j_1^*$, and
$E_{q_0 q_1} \not\subset\RR(\GG_k)$.

We will now show that $\sigma_1$ does not intersect
$\RR(\GG_k)$, except for its endpoints. Suppose otherwise. Then
$\sigma_1$ intersects $\RR(\GG_k)$ at some $\ol t_m$ such that
$t_{j_1} < t_m < t_{j_1^*}$, and for some $t_{m_1}$, we have
$E_{mm_1} \in\sigma_1$ and $E_{mm_1} \not\subset\RR(\GG_k)$.
Let $t_{m_2}$ be the largest $t_m$ with these properties. Then
$\ol t_{m_2} \in\RR(\GG_k)$, $t_{m_2} \in\VV\setminus\VV_k$
and $E_{m_2m_3} \not\subset\RR(\GG_k)$ for some $m_3$. Since
$t_{j_1} < t_{m_2}$, this contradicts the definition of
$t_{j_1}$.

We add $\{(t, E_{q_rq_{r+1}}(t)), 0\leq r\leq n_1-1, t\in
[t_{q_r}, t_{q_{r+1}}]\}$ to $\RR(\GG_k)$, and we let this new
set to be the representation of $\GG_{k+1}$.

We have assumed that if two edges $E_{m_1m}$ and $E_{m_2m}$
belong to $\GG_k$, then $\V_m$ is the end of a forward-minimal
cell in $\GG$. This implies that $\V_{j_1^*}$ cannot be a vertex
of $\GG_k$ because it is the end of a forward-minimal cell in
$\GG$ which is not in $\GG_k$, and the assumption that $\GG$ is
NCC implies that there are no two forward-minimal cells in $\GG$
with the same endpoint.

The TLG $\GG_{k+1}$ is an NCC$^{*}$-graph because it was
constructed from an NCC$^{*}$-graph as in Definition
\ref{def:ind}(ii). It is clear that $\RR(\GG_{k+1}) \subset
\RR(\GG)$. Since $\V_{j_1^*}$ is the end of a forward-minimal
cell in $\GG$, all vertices in $\GG_{k+1}$ satisfy the property
that if two edges $E_{m_1m}$ and $E_{m_2m}$ belong to
$\GG_{k+1}$ then $\V_m$ is the end of a forward-minimal cell in
$\GG$.

\textit{Case} (b). Next suppose that $\ol\V_{j_1^*} \notin
\RR(\GG_k)$. The vertex $\V_{j_1^*}$ is the end of a cell
$(\sigma_3,\sigma_4)$ in $\GG$, with the start at $\V_{j_1}$.
Since $\ol\V_{j_1} \in\RR(\GG_k)$, one and only one of the time
paths $\sigma_3$ and $\sigma_4$ (say, $\sigma_3$) has an edge
$E_{j_1, j_2}$ that belongs to $\RR(\GG_k)$. Let $E_{j_3,j_4}$
be the first edge in $\sigma_3$ that does not belong to
$\RR(\GG_k)$. Then $\ol t_{j_3} \in\RR(\GG_k)$, $t_{j_3} \in
\VV\setminus\VV_k$ and $E_{j_3j_4} \not\subset\RR(\GG_k)$.
Since $t_{j_3} > t_{j_1}$, this contradicts the definition of
$t_{j_1}$. Hence, it cannot happen that $\ol\V_{j_1^*} \notin
\RR(\GG_k)$.

This completes the proof of the inductive step and shows that
NCC-graphs are NCC$^{*}$-graphs.

\textit{Step} 2 (\textit{NCC$^{*}$ $\Rightarrow$ NCC}). The proof
will be inductive. The minimal graph $\GG= (\VV, \EE)$, with
$\VV= \{\V_0=0, \V_N=1\}$ and $\EE= \{E_{0N}\}$, is an
NCC$^*$-graph, and it is also an NCC-graph.

In Definition~\ref{def:ind}, new graphs in the family of
NCC$^{*}$-graphs are created from other graphs in the same
family. Suppose that $\GG_1$ is the minimal graph defined above,
and $(\GG_1, \GG_2,\ldots, \GG_M)$ is any tower of
NCC$^{*}$-graphs.
Suppose that a vertex $\V_k$ is added to $\GG_{n-1}$ so that
$\GG_n$ is the first graph in the sequence that has the
vertex~$\V_k$. Suppose that $\V_k$ is the end of a cell. We will show
that $\V_k$ is the end of only one forward-minimal cell in
$\GG_n$, and that it will not be the end of any other
forward-minimal cell in any graph $\GG_m$ for $n\leq m \leq M$.

Note that the definitions of NCC$^*$-graphs and NCC-graphs are
invariant under time reversal, so the analysis of
forward-minimal cells can be applied to backward-minimal cells.
Hence, we will limit our argument to forward-minimal cells.

Let us recall the construction given in Definition
\ref{def:ind}(ii). Suppose that $\GG_{n-1}= (\VV_{n-1}, \EE_{n-1})$ is
NCC$^*$, where $\VV_{n-1} = \{\V_0, \V_1,\ldots, \V_N\}$. There
exist $\V_j, \V_k \notin\VV_{n-1}$, $\V_j < \V_k$ such that
$E_{j_1j_2}, E_{k_1k_2}\in\EE_1$ for some $\V_{j_1} < \V_j <
\V_{j_2}$ and $\V_{k_1} < \V_k < \V_{k_2}$. We have $\GG_{n}=
(\VV_{n}, \EE_{n})$, $\VV_n = \VV_{n-1} \cup\{\V_j, \V_k\}$.
There exists a time path $\sigma(m_1, m_2,\ldots, m_n) $ such
that $j_1 = m_{n_1}, j_2=m_{n_1+1},k_1=m_{n_2}$, and
$k_2=m_{n_2+1} $, for some $1\leq n_1\leq n_2 \leq n_2+1 \leq
n$. There are two possible cases: (a) If $n_1 < n_2$, then $\EE_2
= (\EE_1 \cup\{E_{jk}, E_{j_1 j}, E_{jj_2},E_{k_1 k},
E_{kk_2}\}) \setminus\{E_{j_1j_2}, E_{k_1k_2}\}$; (b) if
$n_1=n_2$, then $\EE_2 = (\EE_1 \cup\{E'_{jk}, E''_{jk}, E_{j_1
j}, E_{kj_2}\}) \setminus\{E_{j_1j_2}\}$.\vspace*{1pt}

First we will show that $\V_k$ is the end of only one
forward-minimal cell in~$\GG_n$. In case (b), it is obvious that
there is only one cell $(\{E'_{jk}\},\{ E''_{jk}\})$ that is
forward-minimal and has the end at $\V_k$. Consider case (a) and
let $\sigma_1 = \sigma(j, m_{n_1+1},\ldots,m_{n_2},k)$. The cell
$(\sigma_1, \{E_{jk}\})$ is forward-minimal and has the end at
$\V_k$. Suppose that some other cell $(\sigma_2, \sigma_3)$ in
$\GG_n$ has $\V_k$ as its end, and call its start~$\V_r$. Then
one of the time paths $\sigma_2$ or $ \sigma_3$ (say,
$\sigma_2$) must pass through $\V_j$, and the other one,
$\sigma_3$, must pass through $\V_{k_1}$. Recall that $\sigma_1$
is a time path that goes through $\V_j$ and $\V_{k_1}$. Let the
concatenation of the part of $\sigma_2$ between $\V_r$ and
$\V_j$ and the part of $\sigma_1$ between $\V_j$ and $\V_{k_1}$
be called $\sigma_4$. The time path $\sigma_4$ starts at $\V_r$
with an edge different from the first edge of $\sigma_3$. The
paths $\sigma_3$ and $\sigma_4$ contain $\V_{k_1}$ so a
forward-minimal cell with start $\V_r$ must have the end at
$\V_{k_1}$ or an earlier time. Therefore, it cannot have the end
at $\V_k> \V_{k_1}$.

Next we will show that $\V_k$ cannot be the end of two
forward-minimal cells in any graph $\GG_m$, $m>n$. Suppose to
the contrary that $\V_k$ is the end of two different
forward-minimal cells in $\GG_q$ for some $q>n$, but it is not
the end of two different forward-minimal cells in $\GG_m$,
$m<q$. Recall that, according to our construction, $\V_k =
\V_{j^*}$, that is, when we added $\V_k$ to the set of vertices,
we also created a forward-minimal cell with start $\V_j$ and end
$\V_k$. Suppose that $\GG_q$ was constructed by adding an edge
$E_{\ell_1\ell_2}$ to $\GG_{q-1}$, and this procedure created a
new forward-minimal cell $(\sigma_1,\sigma_2)=(\sigma(q_1,\ldots,
q_{s_1}, k), \sigma(r_1,\ldots, r_{s_2}, k))$ in $\GG_q$
with $q_1=r_1 \ne j$. The edge $E_{\ell_1\ell_2}$ must belong to
one of the time paths in this cell, say, $\ell_1 = q_{s_3}$,
$\ell_2 = q_{s_3+1}$ and $q_{s_3+1} \ne k$. According to
Definition~\ref{def:ind}(ii), $\GG_{q-1}$ must contain either a
time path $\sigma(q_{s_3-1}, u_1,\ldots, u_{s_4}, q_{s_3+2})$ or
$\sigma(q_{s_3-1}, q_{s_3+2})$. Then $\GG_{q-1}$ contains the
cell $(\sigma(q_1,\ldots, q_{s_3-1}, u_1,\ldots, u_{s_4},
q_{s_3+2},\ldots, q_{s_1}, k), \sigma(r_1,\ldots, r_{s_2}, k))$,
possibly with $u_1,\ldots, u_{s_4}$ missing in the first path.
If this is a forward-minimal cell, then this contradicts the
assumption that there is only one forward-minimal cell in
$\GG_{q-1}$ with end $\V_k$. If this cell is not forward-minimal,
then $q_1^*$, defined as in Definition~\ref{def:nci}(iv),
satisfies $q_1^* < k$, relative to $\GG_{q-1}$. This implies
that $q_1^* < k$, relative to~$\GG_{q}$, which contradicts the
assumption that $(\sigma_1,\sigma_2)$ is a forward minimal cell.
This completes the proof of part (i).

\mbox{}\hphantom{i}(ii) It is easy to see that if a TLG $\GG$ is planar, then the
region enclosed by $\RR(\GG)$ is divided by $\RR(\GG)$ into
nonintersecting cells that are both forward-minimal and
backward-minimal. Therefore every vertex, except $\V_0$ and
$\V_N$, is either the start or the end of a single cell that is
forward-minimal and backward-minimal.

(iii) Let $\GG= (\VV, \EE)$, where $\VV= \{\V_j =j/7, j=0,1,\ldots, 7\}
$ and
\[
\EE= \{ E_{0, 1 }, E_{1 ,2 },
E_{2 ,3 }, E_{3 ,4 }, E_{4 ,5 }, E_{5 ,6 }, E_{6 ,7}, E_{1 ,4 },
E_{2 ,5 }, E_{3 ,6 }\}.
\]
It is elementary to check
that $\GG$ is an NCC-graph and that it is not planar. See
Figure~\ref{fig1}.

%
\begin{figure}

\includegraphics{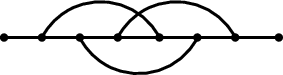}

\caption{A nonplanar NCC-graph.}\label{fig1}
\end{figure}

(iv) Let $\GG= (\VV, \EE)$, where $\VV= \{\V_j =j/7, j=0,1,\ldots, 7\}$ and
\[
\EE= \{ E_{0, 1 }, E_{1 ,2 },
E_{1 ,3 }, E_{2 ,4 }, E_{2 ,5 }, E_{3 ,4 }, E_{3 ,5 }, E_{4 ,6
}, E_{5 ,6 }, E_{6 ,7}\}.
\]
The cells
$(\sigma(3,4,6), \sigma(3,5,6))$ and $(\sigma(2,5,6),
\sigma(2,4,6))$ are minimal and co-terminal. Hence, $\GG$ is not
%
%
\begin{figure}[b]

\includegraphics{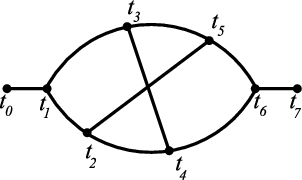}

\caption{A non-NCC-graph.}\label{fig2}
\end{figure}
an NCC-graph. See Figure~\ref{fig2}.
\end{pf}
\begin{remark}
The analysis of NCC-graphs is somewhat complicated due to the
following facts.

\mbox{}\hphantom{i}(i) If $\GG_1$ and $\GG_2$ are NCC-graphs and $\RR(\GG_1)
\subset\RR(\GG_2)$, then it does not necessarily follow that
$\GG_1$ and $\GG_2$ belong to a tower of NCC-graphs. For example,
let $\GG_1$ be obtained from the graph in Figure~\ref{fig2} by
removing $E_{34}$ and $E_{25}$, and let $\GG_2$ be obtained from
the graph in Figure~\ref{fig2} by removing $E_{34}$. Adding
$E_{25}$ to $\GG_1$ does not conform to the rules of Definition
\ref{def:ind}.

(ii) It is quite obvious that there exist TLGs $\GG_1$ and
$\GG_2$ such that $\RR(\GG_1) \subset\RR(\GG_2)$, $\GG_1$ is
NCC and $\GG_2$ is not NCC. For example, take $\GG_1$ to be a
single full path and $\GG_2$ to be the graph in Figure~\ref{fig2}.
It is less obvious that there exist TLGs $\GG_1$ and~$\GG_2$
such that $\RR(\GG_1) \subset\RR(\GG_2)$, $\GG_2$ is NCC and
$\GG_1$ is not NCC. For example, let $\GG_2$ be the graph in
Figure~\ref{fig4}. To see that $\GG_2$ is NCC, note that one can
%
%
\begin{figure}

\includegraphics{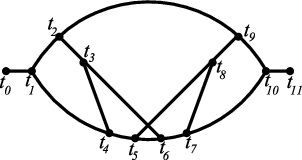}

\caption{An NCC-graph that contains a non-NCC-graph.}\label{fig4}
\end{figure}
construct it as in Definition~\ref{def:ind} by starting with the
full path $\sigma( t_0,t_1, t_4, t_5, t_6, t_7, t_{10}, t_{11})$
and adding edges in this order: $\sigma(t_1,t_2,t_3,t_4)$,
$\sigma(t_3,t_6)$, $\sigma(t_7,t_8,t_9,t_{10})$,
$\sigma(t_5,t_8)$, $\sigma(t_2,t_9)$. Let $\GG_1$ be the graph
obtained by removing $E_{34}$ and $E_{78}$ from $\GG_2$. The
graph $\GG_1$ is topologically the same as that in
Figure~\ref{fig2} so it is non-NCC.
\end{remark}

We will extend the definition of TLGs to graphs with infinitely
many vertices. First, we present two simple generalizations of
TLGs with finite $\VV$.
It will be convenient to allow TLGs (with finitely many
vertices) in which $t_0$ and $t_N$ take values in $\R\cup
\{-\infty, \infty\}$ with the restriction that $t_0 < t_N$.
Clearly, all theorems proved so far apply to thus enlarged
family of TLGs. Note that allowing $t_0$ and $t_N$ to take
infinite values does not add anything significant to the model
because we can rescale the graph by the
deterministic function $t \to\arctan t$. We allow for infinite
values of $t_0$ and $t_N$ to be able to study standard examples
of Markov processes on the real line.
\begin{definition}\label{m27.1}
(i) Suppose that the vertex set of a graph $\GG= (\VV,\EE)$
is infinite. We will call $\GG$ a \textit{time-like graph} (TLG) if
it satisfies the following conditions. (a) There exists a
sequence of TLGs $\GG_n = (\VV_n, \EE_n)$, $n\geq1$, such that
each $\VV_n$ is finite, and for some representations of
$\GG_n$'s and $\GG$ we have $\RR(\GG_n) \subset\RR(\GG_{n+1})$
for every $n$, and $\bigcup_n \RR(\GG_n) = \RR(\GG)$. (b) The
graph $\GG$ is \textit{locally finite}, that is, it has a
representation $\RR(\GG) \subset\R^3$ such that for any compact
set $K \subset\R^3$, only a finite number of edges intersect
$K$.

(ii) We will call a TLG $\GG$ with infinite vertex set an
NCC-graph if it satisfies the following conditions. (a) The
sequence $\{\GG_n\}_{n\geq1}$ in part (i) of the definition can
be chosen so that it is a tower of NCC-graphs in the sense of
Definition~\ref{def:ind}(iii). (b)~Let $\VV_n = \{ t_{0,n},
t_{1,n},\ldots, t_{N_n, n} \}$. The initial vertices
$t_{0,n}\in\VV_n$ and terminal vertices $t_{N_n,n}\in\VV_n$
are the same for all $\GG_n$, that is, $t_{0,n} = t_{0,m}$ and
$t_{N_n,n} = t_{N_m,m}$ for all $n$ and $m$.
\end{definition}
\begin{remark}
(i) Recall the notation from Definition~\ref{m27.1}(ii). It follows
from conditions (a) and (b) of that definition that
the initial
edges form a decreasing sequence, that is, $E_{t_{0,n}, t_{1,n}}
\subset E_{t_{0,m}, t_{1,m}}$ if $n>m$. Similarly, terminal
edges form a decreasing sequence, that is, $E(t_{N_n-1,n},
t_{N_n,n}) \subset E(t_{N_m-1,m}, t_{N_m,m})$ if $n>m$.

(ii) It is easy to see that if $\GG= (\VV,\EE)$ is a
TLG with infinite number of vertices, then all vertices have
degree 3, except for at most two vertices with degree 1.
\end{remark}

\section{Markov processes on time-like
graphs}\label{sec:mkovconstruction}

Suppose that $\GG=(\VV,\EE)$ is a TLG and $\VV$ is finite. Let $\PP
$ denote
the distribution of a Markov process $\{Y(t)$, $t\in[t_0,t_N]\}$.
We do not assume that the Markov process is necessarily
time-homogeneous, that is, that its transition probabilities are
invariant under time shifts.

The regular conditional distribution of $\{Y(t), t\in
[t_1,t_2]\}$ given $\{Y(t_1)=y_1$, $Y(t_2)=y_2\}$ exists for
$\PP$-almost all values of $(Y(t_1), Y(t_2))$, under mild
assumptions on the state space of $Y$ (see Section 21.4 in
\cite{FG} for a discussion of conditional probabilities). The
conditional distribution of $\{Y(t), t\in[t_1,t_2]\}$ given
$\{Y(t_1)=y_1, Y(t_2)=y_2\}$ will be called a \textit{Markov
bridge}. The Markov bridge is a (time-inhomogeneous) Markov
process on the interval $[t_1, t_2]$.
\begin{definition}
Let $X$ be a collection of random variables $X_{jk}(t)$, for all
$E_{jk} \in\EE$ and $t\in[\V_j, \V_k]$. If $E_{jk}, E_{kn}\in
\EE$, then we assume that $X_{jk}(\V_k) = X_{kn}(\V_k)$, and
similarly, if $E_{jk}, E_{nk}\in\EE$, then $X_{jk}(\V_k) =
X_{nk}(\V_k)$.

Recall that we may have two edges $E'_{jk}$ and $E''_{jk}$ with
the same endpoints $\V_j$ and $\V_k$. Then the collection of
random variables $X_{jk}(t)$ contains separate families
$\{X'_{jk}(t),t\in[\V_j, \V_k]\}$ and $\{X''_{jk}(t),t\in
[\V_j, \V_k]\}$ corresponding to each edge.\vspace*{1pt}

Consider a time path $\sigma_1 = \sigma(k_1, k_2,\ldots, k_n) $
and let $X_{\sigma_1} (t) = X_{k_1, k_2,\ldots, k_n}(t) =
X_{k_j, k_{j+1}}(t)$ for all $j = 1, 2,\ldots, n-1$ and $t\in
[\V_{k_j}, \V_{k_{j+1}}]$. We will call $X$ a
\textit{$\PP$-process on $\GG$} if for every full time path
$\sigma$, the process $\{X_\sigma(t), t\in[t_0,t_N]\}$ has
distribution $\PP$. We will write $X(t)$ instead of $X_{jk}(t)$ or
$X_\sigma(t)$ when no confusion may arise.
\end{definition}

We extend the notion of a $\PP$-process on a TLG (with
finite $\VV$) to processes that are defined for all $t\in E,
E\in\EE$, except $t_0$ and $t_N$. For example, we can take $t_0
= -\infty$, $t_N =\infty$ and let $\PP$ be the distribution of a
two-sided Brownian motion conditioned to have value 0 at time 0.
This extension does not pose any technical problems
but allows us to consider natural examples.

Note that if $X$ is a $\PP$-process and $\sigma_1 = \sigma(k_1,
k_2,\ldots, k_n) $ then conditionally on $X(t_{k_j}) = x_j$, $1\leq
j \leq n$, the path $\{X(t), t\in\sigma_1\}$ has the same
distribution as the concatenation of independent Markov bridges
from $(t_{k_j}, X(t_{k_j}))$ to $(t_{k_{j+1}}$, $X(t_{k_{j+1}}))$,
$1\leq j \leq n$.

For every TLG $\GG$ and every $\PP$, there exists a
$\PP$-process on $\GG$. A trivial example of a $\PP$-process on
a TLG can be constructed by taking a Markov process $\{Y(t),
t\in[t_0,t_N]\}$ with distribution $\PP$ and then letting
$X_{jk}(t) = Y(t)$ for all $E_{jk}\in\EE$ and $t\in[\V_{j},
\V_{k}]$.
\begin{definition}\label{m23.1}
Suppose that $\WW\subset\RR(\GG)$ is a
finite nonempty set such that $\RR(\GG) \setminus\WW$ is
disconnected. Some edges of $\GG$ are cut by $\WW$ into two or
more sub-edges; let us call this new collection of edges
$\EE_0$. Suppose that $\EE_1$ and $\EE_2$ are disjoint sets of
edges with the union equal to $\EE_0$. Each set $\EE_1$ and
$\EE_2$ may consist of several connected components of $\RR(\GG)
\setminus\WW$. We will call a process $X$ on a TLG $\GG$ a
\textit{graph-Markovian} process if for all $\WW, \EE_1$ and
$\EE_2$, the conditional distribution of $\{X(t), t\in E, E\in
\EE_1\}$ given $\{X(t), t\in E, E\in\EE_2\}$ depends only on
$\{X(t), t\in\WW\}$.
\end{definition}
\begin{definition}\label{m23.2}
For a point $t$ in $\GG$, let $F(t)$
(``the future of $t$'') be the set of all points $s \geq t$ such
that there is a full path passing through $t$ and $s$.
Similarly, let $P(t)$ (``the past of $t$'') be the set of all
points $s \leq t$ such that there is a full path passing through
$t$ and $s$. We will say that a process $X$ on a TLG $\GG$ is
\textit{time-Markovian} if for every $t$, the conditional
distributions of $\{X(s), s\in F(t)\}$ and $\{X(s), s\in P(t)\}$ given
$X(t)$ are independent.
\end{definition}
\begin{remark}\label{m22.1}
(i) Suppose that a process $X$ on a TLG $\GG$ is time-Markovian, and
$s$ and $t$ lie on a full path $\sigma$, with $s<t$. It is easy
to see that the conditional distributions of $\{X(u), s\leq
u\leq t, u \in\sigma\}$, $\{X(u), u\in F(t)\}$ and $\{X(u),
u\in P(s)\}$ given $X(s)$ and $X(t)$ are jointly independent. Moreover,
the conditional distribution of $\{X(u), s\leq u\leq t, u \in
\sigma\}$ given $X(s)$ and $X(t)$ is that of a Markov bridge
between $(s, X(s))$ and $(t, X(t))$.

(ii) It is easy to see that if a process $X$ on a TLG $\GG$ is
graph-Markovian, then the families of random variables $\{X(t), t\in
E\}$, $E\in\EE$, are conditionally independent given $\{X(t),
t\in\VV\}$, and for every $E_{jk}\in\EE$, the conditional
distribution of $\{X(t), t\in E_{jk}\}$ given $\{X(t), t\in
\VV\}$ is a Markov bridge between $(t_j, X(t_j))$ and $(t_k,
X(t_k))$.
\end{remark}
\begin{definition}
We will say that a $\PP$-process $X$ on a TLG $\GG$ with finite
vertex set $\VV$ is \textit{natural} if it is time-Markovian and
graph-Markovian.
\end{definition}

Recall that we call a cell $(\sigma(j_1, j_2,\ldots, j_{n_1}),
\sigma(k_1, k_2,\ldots, k_{n_2}))$ simple if there is no time
path $\sigma(m_1, m_2,\ldots, m_{n_3}) $ such that we have
$m_1\in\{j_2,\ldots,\break j_{n_1-1}\}$ and $m_{n_3} \in\{k_2,\ldots,
k_{n_2-1}\}$, or $m_1\in\{k_2,\ldots, k_{n_2-1}\}$ and
$m_{n_3} \in\{j_2,\ldots,\break j_{n_1-1}\}$.
\begin{definition}
We will say that a process $X$ on a TLG $\GG$ is
\textit{cell-Markovian} if for any simple cell consisting of
$\sigma(j_1, j_2,\ldots, j_{n_1})$ and $\sigma(k_1, k_2,\ldots,\break
k_{n_2})$, the processes $\{X_{j_1, j_2,\ldots, j_{n_1}}(t),
t\in[\V_{j_1}, \V_{j_{n_1}}]\}$ and $\{X_{k_1, k_2,\ldots,
k_{n_2}}(t), t\in[\V_{k_1},\break \V_{k_{n_2}}]\}$ are conditionally
independent, given the values of $X_{j_1, j_2,\ldots,
j_{n_1}}(\V_{j_1})$ and $X_{j_1, j_2,\ldots,
j_{n_1}}(\V_{j_{n_1}})$ [these are the same as $X_{k_1, k_2,\ldots,
k_{n_2}}(\V_{k_1})$ and $X_{k_1, k_2,\ldots,
k_{n_2}}(\V_{k_{n_2}})$].
\end{definition}

Note that there is no direct logical relation between the
notions of time-Markovian, graph-Markovian and cell-Markovian
processes.
\begin{theorem}\label{th:nat} \textup{(i)} For every NCC-graph $\GG$ with
finite vertex set $\VV$ and every Markov process $\PP$, there
exists a natural $\PP$-process $X$ on $\GG$, and the
distribution of such a process is unique. The natural
$\PP$-process is cell-Markovian.

\textup{(ii)} Suppose that for some TLG $\GG$ with $\VV= \{t_0=0, t_1,\ldots,
t_N=1\}$, there exist simple
coterminal cells $(\sigma_1,\sigma_2)$ with endpoints $t_1<t_2$,
and $(\sigma_3,\sigma_4)$ with endpoints $t_3<t_4$. Assume that
either $ t_1 < t_3$ or $ t_2 < t_4$. Then there is no natural
Brownian motion on $\GG$.
\end{theorem}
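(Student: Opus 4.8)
The plan is to proceed inductively along the tower of NCC$^*$-graphs guaranteed by Theorem~\ref{t:s25.1}(i), constructing the natural $\PP$-process one graph at a time. For the base case, the minimal graph is a single full path, on which the natural $\PP$-process is simply a copy of the Markov process with law $\PP$; existence and uniqueness are immediate. For the inductive step, suppose $\GG_{n}$ is obtained from $\GG_{n-1}$ by the move in Definition~\ref{def:ind}(ii), and suppose a natural $\PP$-process $X^{(n-1)}$ on $\GG_{n-1}$ already exists and is unique in law. In the generic case ($n_1<n_2$) we adjoin a new time path whose endpoints $\overline t_j$ and $\overline t_k$ lie in the interior of edges already present, splitting those edges and creating exactly one new forward-minimal cell. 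I would define $X^{(n)}$ by keeping $X^{(n-1)}$ on the old part of the representation and, conditionally on the already-defined values $X(\overline t_j)$ and $X(\overline t_k)$, attaching along the new path an independent Markov bridge from $(t_j, X(t_j))$ to $(t_k, X(t_k))$. The degenerate case $n_1=n_2$ (two parallel edges $E'_{jk},E''_{jk}$) is handled the same way, adjoining two conditionally independent bridges between the common endpoints. One must check that this conditional construction is well defined for $\PP$-almost every endpoint pair, which is exactly the a.e. existence of the Markov bridge quoted from \cite{FG}.

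The core of the argument is verifying that $X^{(n)}$ is again natural and cell-Markovian, and that \emph{any} natural process on $\GG_n$ restricts to a natural process on $\GG_{n-1}$ and must agree with the bridge construction on the new path, which gives uniqueness. For uniqueness I would use Remark~\ref{m22.1}: since the new path and the old representation meet only at $\overline t_j$ and $\overline t_k$, the time-Markovian property forces the new segment to be conditionally independent of the rest given $(X(t_j),X(t_k))$ and to have the Markov-bridge law, so its distribution is pinned down. For the graph-Markovian and cell-Markovian properties, the essential point is that a cut-set $\WW$ and a partition $\EE_1\sqcup\EE_2$ of $\GG_n$ induces a corresponding cut of $\GG_{n-1}$ together with the new bridge, and conditional independence propagates from the inductive hypothesis through the independence built into the bridge attachment. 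This is where the NCC hypothesis does the real work: it guarantees, via the structure of forward-minimal cells established in the proof of Theorem~\ref{t:s25.1}(i), that the two branches of each cell are joined only at the shared endpoints and never cross-linked by a third time path, so that splicing in an independent bridge does not create a hidden dependency that would violate graph-Markovianity on some cell further along in the tower. I expect this bookkeeping—showing that every cut of $\GG_n$ decomposes compatibly with the inductive decomposition of $\GG_{n-1}$—to be the main obstacle, since the order in which edges are added need not respect the nesting of cells (cf.\ the cautionary Remark following Theorem~\ref{t:s25.1}).

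**The plan for part (ii): nonexistence for two co-terminal simple cells.**

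For the impossibility statement I would argue by contradiction using the special structure of Brownian motion, namely the identity $\var(X(t_2)-X(t_1)) = t_2 - t_1$ for any pair of times joined along a full path. Assume a natural Brownian motion $X$ exists on $\GG$, and consider the two simple co-terminal cells $(\sigma_1,\sigma_2)$ and $(\sigma_3,\sigma_4)$ with endpoints $t_1<t_2$ and $t_3<t_4$, where we may assume $t_1<t_3$ (the case $t_2<t_4$ is symmetric under time reversal). The idea is to compute the covariance structure of the four endpoint values $X(t_1),X(t_2),X(t_3),X(t_4)$ in two different ways and derive a contradiction. Because each cell is simple, the cell-Markovian property from part~(i) forces the two branches of a cell to be conditionally independent given the endpoint values, which constrains the joint Gaussian law; simultaneously, along each individual full path through the graph the increments must have the Brownian variances, i.e.\ be Gaussian with variance equal to the elapsed time.

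Concretely, I would extract from the cell $(\sigma_1,\sigma_2)$ and the time-Markov property the conditional variance of $X(t_2)$ given $X(t_1)$, and similarly for the other cell, and then examine the covariance $\mathrm{Cov}(X(t_2)-X(t_1),\,X(t_4)-X(t_3))$ along overlapping portions of full paths. The hypothesis $t_1<t_3$ (or $t_2<t_4$) ensures the two cells are genuinely ``staggered'' rather than nested, and this staggering produces two incompatible requirements: the conditional independence forced by one simple cell contradicts the forced Brownian covariance along a full path that threads through both cells. The hard part will be organizing the covariance computation so that the contradiction is visible—that is, exhibiting a single full path (or pair of full paths) along which the Brownian variance identity conflicts with the Gaussian conditional-independence constraints imposed by the two simple cells. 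Once such a path is identified, the contradiction reduces to a finite linear-algebraic incompatibility among the entries of a $4\times 4$ covariance matrix, which I would write out explicitly to conclude that no natural Brownian motion can exist.
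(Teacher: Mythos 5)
Your plan for part (i) follows the paper's proof essentially verbatim: induction along the tower of NCC$^*$-graphs, attaching conditionally independent Markov bridges on the newly added path, and proving uniqueness by showing that the restriction of any natural process on $\GG_n$ to $\GG_{n-1}$ is again natural (the paper spends most of its effort on exactly the bookkeeping you flag, in particular the graph-Markovian property of the restriction). One small correction: the fact that the new segment is conditionally independent of the rest given $(X(t_j),X(t_k))$ and has the Markov-bridge law comes from the \emph{graph}-Markovian property, i.e.\ Remark \ref{m22.1}(ii) with cut set $\{\ol t_j,\ol t_k\}$, not from the time-Markovian property.

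Part (ii) contains a genuine gap. You locate the contradiction in the $4\times 4$ covariance matrix of the endpoint values $X(t_1),X(t_2),X(t_3),X(t_4)$, but no contradiction exists there. In the shared-start case ($t_1=t_3$, $t_2<t_4$, to which one reduces by time reversal), the only constraints that the Brownian-along-full-paths property imposes on these values are $\E X(t_1)X(t_2)=\E X(t_1)X(t_4)=t_1$ together with the variances, and all of these are satisfied by the finite-dimensional distributions of a single standard Brownian motion; the cell-Markovian property adds nothing at the level of the endpoints, because it is a statement about the branch processes \emph{conditionally on} the endpoint values. The contradiction necessarily involves interior points of the cells, and the missing structural observation --- the heart of the paper's argument --- is how to find interior points shared by the two cells: the common start $t_1=t_3$ has degree $3$, hence exactly two outgoing edges, so the first edges of $\sigma_1,\sigma_2$ must coincide (after relabelling) with the first edges of $\sigma_3,\sigma_4$. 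This yields two vertices $\V_k$ and $\V_m$, the far endpoints of those two edges, that lie on branches of \emph{both} cells. The paper then computes $\E\bigl(X(\V_k)X(\V_m)\bigr)$ twice via Proposition \ref{cov} (conditionally independent Brownian bridges over $[\V_1,\V_2]$ for the first cell, over $[\V_1,\V_4]$ for the second), obtaining $\V_1+\frac{(\V_k-\V_1)(\V_m-\V_1)}{\V_2-\V_1}$ and $\V_1+\frac{(\V_k-\V_1)(\V_m-\V_1)}{\V_4-\V_1}$, which differ since $\V_2<\V_4$. Without identifying these common interior vertices, the ``staggering'' intuition never turns into an equation, and the endpoint covariance computation you propose terminates without any contradiction.
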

\begin{remark}\label{m25.1} (i) Part (ii) of Theorem
\ref{th:nat} cannot be generalized to say that ``Then there is
no natural Markov process on $\GG$.'' The reason is that the
process identically equal to 0 is a natural Markov processes on
every TLG. There are also less trivial examples.

\mbox{}\hphantom{i}(ii) If we take $t_2 = 3/7$ instead of $2/7$ in Figure
\ref{fig2}, then we will have an example of a TLG with
coterminal cells for which neither
$t_1 < t_3$ nor $ t_2 < t_4$ holds. The starts $t_2$ and $t_3$
of the two cells correspond to the same time $3/7$.

(iii) If $\GG_1 \subset\GG_2$, both graphs are NCC, $X$ is a
natural $\PP$-process on $\GG_1$ and $X'$ is a natural
$\PP$-process on $\GG_2$, then it is not necessarily true that
the distribution of $X$ is that of $X'$ restricted to $\GG_1$.
To see this, let $\PP$ be the distribution of Brownian motion,
$\GG_2$ be the graph in Figure~\ref{fig4} and let $\GG_1$ be the
graph obtained by deleting the edges $E_{36}, E_{23}$ and
$E_{34}$. One can check that the joint distribution of $(X(t_2),
X(t_4))$ is different from that of $(X'(t_2), X'(t_4))$. This
can be shown by applying Proposition~\ref{cov} to $(X(t_2),
X(t_4))$. To determine the distribution of $(X'(t_2), X'(t_4))$,
note that $t_2$ and $t_4$ lie on a full time path in $\GG_2$.

(iv) Suppose that an NCC-graph $\GG$ is the last element of a
tower of NCC-graphs $\{\GG_k\}_{1\leq k \leq n}$. Then the
restriction of a natural process $X$ on $\GG$ to any $\GG_j$,
$1\leq j \leq n$, is a natural process on $\GG_j$. This follows
from the proof of Theorem~\ref{th:nat} below and from the
uniqueness of the natural $\PP$-process.

\mbox{}\hphantom{i}(v) Does uniqueness in Theorem~\ref{th:nat}(i) hold true if we replace
``natural'' with ``graph-Markovian?'' We leave this as an open problem.
\end{remark}

We will prove part (i) of Theorem~\ref{th:nat} in this section
and part (ii) in the next section.
\begin{pf*}{Proof of Theorem~\ref{th:nat}\textup{(i)}}
We assume that $\VV$ is finite, $t_0 = 0$ and $t_N=1$.
Fix any Markov process distribution $\PP$.
We will use induction, since according to Theorem~\ref{t:s25.1},
the family of all NCC-graphs can be constructed inductively, as
in Definition~\ref{def:ind}.

It is obvious that there exists a unique in law natural
$\PP$-process on the minimal graph $\GG= (\VV, \EE)$, with $\VV
= \{\V_1=0, \V_N=1\}$ and $\EE= \{E_{1N}\}$. It is also easy to
see that this process is cell-Markovian.

Suppose that $\GG_1$ is an NCC-graph. We make the inductive
assumption that there exists a natural $\PP$-process $X$ on
$\GG_1$, it is unique in law and it is cell-Markovian.

Recall how a new graph $\GG_2$ is constructed in part (ii) of
Definition~\ref{def:ind}. Suppose that $\GG_1= (\VV_1, \EE_1)$
with $\VV_1 = \{\V_1, \V_2,\ldots, \V_N\}$. Suppose that $\V_j,
\V_k \notin\VV_1$, $\V_j < \V_k$, and for some $E_{j_1j_2},
E_{k_1k_2}\in\EE_1$, we have $\V_{j_1} < \V_j < \V_{j_2}$ and
$\V_{k_1} < \V_k < \V_{k_2}$. Let $\VV_2 = \VV_1 \cup\{\V_j,
\V_k\}$. Assume that there exists a time path $\sigma(m_1,
m_2,\ldots,\break
m_n) $ such that $j_1 = m_{n_1},
j_2=m_{n_1+1},k_1=m_{n_2}$, and $k_2=m_{n_2+1} $, for some
$1\leq n_1\leq n_2 \leq n_2+1 \leq n$. (a) If $n_1 < n_2$, then
we let $\EE_2 = (\EE_1 \cup\{E_{jk}, E_{j_1 j}, E_{jj_2},\vspace*{1pt}\break E_{k_1
k}, E_{kk_2}\}) \setminus\{E_{j_1j_2}, E_{k_1k_2}\}$. (b) If
$n_1=n_2$, then we let $\EE_2 = (\EE_1 \cup\{E'_{jk}, E''_{jk},\break
E_{j_1 j}, E_{kj_2}\}) \setminus\{E_{j_1j_2}\}$. Then $\GG_2=
(\VV_2, \EE_2)$.

It will suffice to show that there exists a natural
$\PP$-process $X$ on $\GG_2$; it is unique in law and it is
cell-Markovian.

In case (a), we effectively add only one edge $E_{jk}$ to graph
$\GG_1$. Other ``new'' edges $E_{j_1 j}, E_{jj_2},E_{k_1 k}$ and
$E_{kk_2}$ are created by subdividing $E_{j_1j_2}$ and
$E_{k_1k_2}$. Let $Z_1 = X_{j_1j_2} (\V_j)$ and $Z_2 =
X_{k_1k_2} (\V_k)$. We define $\{X'_{jk}(t),t\in[\V_j, \V_k]\}$
to be a Markov bridge between $(\V_j, Z_1)$ and $(\V_k, Z_2)$,
otherwise independent of $\{X(t), t \in\GG_1\}$. In other
words, $X'_{jk}(\V_j)=Z_1$, $X'_{jk}(\V_k)=Z_2$, and the
distribution of $\{X'_{jk}(t),t\in[\V_j, \V_k]\}$ is the same
as that of the process $\{Y(t) , t\in[\V_j, \V_k]\}$ under
$\PP$, conditioned by $Y(\V_j) = Z_1$ and $Y(\V_k) = Z_2$. We
define a process $X'$ on TLG $\GG_2$ by letting it have the same
values as $X$ on $\RR(\GG_1)$, and using the above definition on
$E_{jk}$.

In case (b), we add two edges $E'_{jk}$ and $ E''_{jk}$ to
$\GG_1$. The other new edges $E_{j_1 j}$ and $E_{kj_2}$ are
created by subdividing $E_{j_1j_2}$. Let $Z_1 = X_{j_1j_2}
(\V_j)$ and $Z_2 = X_{j_1j_2} (\V_k)$. We define
$\{X'_{jk}(t),t\in[\V_j, \V_k]\}$ and $\{X''_{jk}(t),t\in
[\V_j, \V_k]\}$ to be independent Markov bridges between $(\V_j,
Z_1)$ and $(\V_k, Z_2)$, otherwise independent of $\{X(t), t \in
\GG_1\}$. We choose the representations $\RR(\GG_1) $ and $
\RR(\GG_2)$ so that they agree on $\RR(\GG_1) $ with the part of
$E_{j_1,j_2}$ between $t_j$ and $t_k$ removed. We define a
process $X'$ on TLG $\GG_2$ by first letting it have the same
values as $X$ on $\RR(\GG_1) \setminus E_{jk}$. The process
$\{X'_{jk}(t),t\in[\V_j, \V_k]\}$ represents the values of $X'$
on the edge $E'_{jk}$ and $\{X''_{jk}(t),t\in[\V_j, \V_k]\}$
represents the values of $X'$ on the edge $E''_{jk}$.

In the rest of the proof, we will focus on case (a). Case (b)
requires minor modifications and is left to the reader.

Recall that $\GG_1$ contains a time path $\sigma(m_1, m_2,\ldots, m_n)
$ with $j_1 = m_{n_1}, j_2=m_{n_1+1},k_1=m_{n_2}$
and $k_2=m_{n_2+1} $, for some $n_1 < n_2$. This implies that
$\GG_2$ must contain a time path $\sigma(m_1,\ldots, j_1, j,
j_2,\ldots, k_1, k , k_2,\ldots, m_n) $. There is a Markov
bridge between $(\V_j, Z_1)$ and $(\V_k, Z_2)$ in the
representation $\RR(\GG_1)$. The construction of
$\{X'_{jk}(t),t\in[\V_j, \V_k]\}$ consists of generating an
independent Markov bridge between the same points. By the Markov
property\vspace*{1pt} of $\PP$, the distribution of $X'_{j_1, j, k , k_2}$ on
the graph $\GG_2$ is the same as the distribution of $X_{j_1,
j_2,\ldots, k_1, k_2}$ on the graph $\GG_1$. This implies that
for every full path $\sigma(r_1,\ldots,j_1, j, k , k_2,\ldots,
r_n)$ in $\GG_2$, the distribution of $X'_{r_1,\ldots,j_1, j, k
, k_2,\ldots, r_n}$ is $\PP$. Hence, $X'$ is a $\PP$-process on
$\GG_2$.

Next we will show that $X'$ is cell-Markovian. Consider a simple
cell $(\sigma_1, \sigma_2)$ in $\GG_2$. Suppose that the paths
$\sigma_1$ and $ \sigma_2$ do not contain the new edge $E_{jk}$.
Then $(\sigma_1, \sigma_2)$ is a simple cell in $\GG_1$. By the
inductive assumption, the processes $X$ on $\sigma_1$ and $X$ on
$\sigma_2$ are conditionally independent given their values at
the end and start of the cell. Since $X'$ is equal to $X$ on
$(\sigma_1, \sigma_2)$, the same claim holds for $X'$.

Now consider a simple cell $(\sigma_1, \sigma_2)$ in $\GG_2$
such that $\sigma_1$ contains $E_{jk}$. Then we have $\sigma_1=
\sigma(r_1,\ldots, j_1, j, k , k_2,\ldots, r_{n_1})$ and
$\sigma_2= \sigma(q_1,\ldots, q_{n_2})$. We will show that
processes $X'_{r_1,\ldots, j_1, j, k , k_2,\ldots, r_{n_1}}$ and
$X'_{q_1,\ldots, q_{n_2}}$ are conditionally independent given
their values at the start and end.

First, we will argue that the cell consisting of time paths $\sigma
_3=\sigma(r_1,\ldots, j_1$, $j_2,\ldots, k_1, k_2,\ldots, r_{n_1})$ and
$\sigma_4
= \sigma(q_1,\ldots, q_{n_2})$ is simple in $\GG_1$. Suppose
otherwise, that is, there exists a time path $\sigma_5 =
\sigma(s_1,\ldots, s_{n_3})$ in $\GG_1$ which connects
$\sigma_3$ and $\sigma_4$. We will consider several cases. If
$s_1 \in\{r_2,\ldots, j_1\}$ and $s_{n_3} \in\{ q_2,\ldots,
q_{n_2-1}\}$ then $\sigma_5$ connects $\sigma_1$ and $\sigma_2$
in $\GG_2$, a contradiction. We arrive at a contradiction for a
similar reason if we assume that $s_1 \in\{k_2,\ldots,
r_{n_1-1}\}$ and $s_{n_3} \in\{ q_2,\ldots, q_{n_2-1}\}$; or if
we assume that $s_1 \in\{q_2,\ldots, q_{n_2-1}\}$ and $s_{n_3}
\in\{ r_2,\ldots, j_1\}$; or $s_1 \in\{q_2,\ldots, q_{n_2-1}\}$
and $s_{n_3} \in\{ k_2,\ldots, r_{n_1-1}\}$. Next suppose that
$s_1 \in\{j_2,\ldots, k_1\}$ and $s_{n_3} \in\{ q_2,\ldots,
q_{n_2-1}\}$. Let $\sigma_6 = \sigma(j_1, \ldots, s_1)$ be the
sub-path of $\sigma_3$. Then the concatenation of $\sigma_6$ and
$\sigma_5$ connects $\sigma_1$ and $\sigma_2$ in $\GG_2$, a
contradiction once again. Finally, suppose that $s_1 \in\{q_2,\ldots,
q_{n_2-1}\}$ and $s_{n_3} \in\{ j_2,\ldots, k_1\}$. Let
$\sigma_7 = \sigma(s_{n_3}, \ldots, k_1)$ be the sub-path of
$\sigma_3$. Then the concatenation of $\sigma_5$ and $\sigma_7$
connects $\sigma_1$ and $\sigma_2$ in $\GG_2$, which is a
contradiction.

By the\vspace*{1pt} inductive assumption, $X_{r_1,\ldots, j_1, j_2,\ldots,
k_1, k_2,\ldots, r_{n_1}}$ and $X_{q_1,\ldots, q_{n_2}}$ are
conditionally independent given the values at the start and end
of the corresponding cell. This and the Markov property imply
that the process $\{X_{r_1,\ldots, j_1, j_2,\ldots, k_1, k_2,\ldots,
r_{n_1}}(t)$, $t\in[\V_j, \V_k]\}$ is conditionally
independent from the processes $\{X_{r_1,\ldots, j_1, j_2,\ldots, k_1,
k_2,\ldots, r_{n_1}}(t), t\in[\V_{r_1},
\V_{r_{n_1}}]\setminus[\V_j, \V_k]\}$ and\vspace*{1pt} $X_{q_1,\ldots,
q_{n_2}}$ given the values of $X_{r_1,\ldots, j_1, j_2,\ldots,
k_1, k_2,\ldots, r_{n_1}}(\V_j)$ and\vspace*{1pt} $X_{r_1,\ldots, j_1, j_2,\ldots,
k_1, k_2,\ldots, r_{n_1}}(\V_k)$. The claim remains valid
if we replace $\{X_{r_1,\ldots, j_1, j_2,\ldots, k_1, k_2,\ldots,
r_{n_1}}(t), t\in[\V_j, \V_k]\}$ with $\{X'_{jk} (t),
t\in[\V_j, \V_k]\}$, and this in turn shows that the joint
distribution of $X'_{r_1,\ldots, j_1, j, k,k_2,\ldots, r_{n_1}}$
and $X'_{q_1,\ldots, q_{n_2}}$ is the same as that of $X_{r_1,\ldots,
j_1, j_2,\ldots, k_1, k_2,\ldots, r_{n_1}}$ and $X_{q_1,\ldots,
q_{n_2}}$. Hence, $X'_{r_1,\ldots, j_1, j, k , k_2,\ldots, r_{n_1}}$
and $X'_{q_1,\ldots, q_{n_2}}$ are
conditionally independent given their values at the start and
end. We have shown that $X'$ is cell-Markovian.

Next we will show that $X'$ is time-Markovian. Suppose that $t
\notin E_{jk}$. If $F(t)$ and $P(t)$ in $\GG_2$ are the same as
$F(t)$ and $P(t)$ in $\GG_1$, then the time-Markov property
obviously holds for $t$ in $\GG_2$. Suppose that the future
$F_2(t)$ of $t$ in $\GG_2$ is the union of the future $F(t)$ of
$t$ in $\GG_1$ and $E_{jk}$. Since $\{X'(t), t\in E_{jk}\}$ is
the Markov bridge between $(t_j, X(t_j))$ and $(t_k, X(t_k))$
otherwise independent of $\{X'(t), t \in\EE_2 \setminus
\EE_1\}$, and, by the inductive assumption, $\{X(s), s\in
F(t)\}$ and $\{X(s), s\in P(t)\}$ are conditionally independent
given $X(t)$, it follows easily that $\{X'(s), s\in F_2(t)\}$
and $\{X'(s), s\in P(t)\}$ are conditionally independent given
$X'(t)$. A similar argument applies when the past $P_2(t)$ of
$t$ in $\GG_2$ is the union of the past $P(t)$ of $t$ in $\GG_1$
and~$E_{jk}$.

Consider the case when $t \in E_{jk}$. Let $\sigma$ be a full
path disjoint from $E_{jk}$ except for $t_j$ and $t_k$. By
Remark~\ref{m22.1}, the conditional distributions of $\{X(u),
t_j\leq u\leq t_k, u \in\sigma\}$, $\{X(u), u\in F(t_k)\}$ and
$\{X(u), u\in P(t_j)\}$ are independent given $X(t_j)$ and
$X(t_k)$. Moreover, the conditional distribution of $\{X(u),
t_j\leq u\leq t_k, u \in\sigma\}$ is that of a Markov bridge
between $(t_j, X(t_j))$ and $(t_k, X(t_k))$. This and the fact
that $\{X'(t), t\in E_{jk}\}$ is the Markov bridge between
$(t_j, X(t_j))$ and $(t_k, X(t_k))$ otherwise independent of
$\{X'(t), t \in\EE_2 \setminus\EE_1\}$ imply that the joint
distribution of $\{X(u), t_j\leq u\leq t_k, u \in E_{jk}\}$,
$\{X(u), u\in F(t_k)\}$ and $\{X(u), u\in P(t_j)\}$ is the same
as the joint distribution of $\{X(u), t_j\leq u\leq t_k, u \in
\sigma\}$, $\{X(u), u\in F(t_k)\}$ and $\{X(u), u\in P(t_j)\}$.
By the inductive assumption, the time-Markovian property holds
for $X$, $\GG_1$ and the point $t_*\in\sigma$ with the same
time coordinate as $t$, so we conclude that the time-Markovian
property holds for $X'$, $\GG_2$ and $t$. This completes the
proof of the time-Markovian property for $X'$.

We will now show that $X'$ is graph-Markovian. Suppose that
$\WW\subset\RR(\GG_2)$ is finite, and $\EE_0, \EE_1$ and $\EE_2$
are as in Definition~\ref{m23.1}. Let $\WW^* = \WW\cap
\RR(\GG_1)$, and, assuming that $\WW^* \ne\varnothing$, let
$\EE_0^*, \EE_1^*$ and $\EE_2^*$ be defined relative to $\GG_1$
as in Definition~\ref{m23.1}. By the induction assumption, the
conditional distribution of $\{X'(t), t\in E, E\in\EE_1^*\}$
given $\{X'(t), t\in E, E\in\EE_2^*\}$ depends only on
$\{X'(t), t\in\WW^*\}$. Since $\{X'(t), t\in E_{jk} \}$ is a
Markov bridge between $t_j$ and $t_k$, independent of the values
of $X'$ except for $X'(t_j)$ and $X'(t_k)$, it is easy to see
that the conditional distribution of $\{X(t), t\in E, E\in
\EE_1\}$ given $\{X(t), t\in E, E\in\EE_2\}$ depends only on
$\{X(t), t\in\WW\}$. If $\WW^* = \varnothing$, the same
conclusion is also evident. Hence, $\GG_2$ is graph-Markovian.

It remains to prove uniqueness in law of a natural $\PP$-process
on an NCC-graph. Once again, we use induction. The distribution
of a natural $\PP$-process on the ``minimal'' graph described
above is obviously unique. Suppose that we have shown uniqueness
in law for natural $\PP$-processes on all NCC-graphs with the number of
edges equal to $1+ 3r$ or less. Any NCC-graph $\GG_2=(\VV_2,
\EE_2)$ with $1+3(r+1)$ edges can be constructed from an
NCC-graph $\GG_1=(\VV_1, \EE_1)$ with $1+3r$ edges by adding an
edge, say $E_{jk}$, as in Definition~\ref{def:ind}(ii).
Consider a natural $\PP$-process $X'$ on $\GG_2$. Its
restriction $X$ to $\GG_1$ is a $\PP$-process. We will argue
that $X$ is a natural $\PP$-process on $\GG_1$.

First, we will prove that $X$ on $\GG_1$ is time-Markovian.
Consider any point $t$ in~$\GG_1$, and let $F_1(t)$ and $P_1(t)$
be the future and past of $t$ relative to $\GG_1$, defined as in
Definition~\ref{m23.2}. Let $F_2(t)$ and $P_2(t)$ be the future
and past of $t$ relative to $\GG_2$ and note that $F_1(t)
\subset F_2(t)$ and $P_1(t) \subset P_2(t)$. Since $X'$ on
$\GG_2$ is assumed to be natural, the conditional distributions
of $\{X'(s), s\in F_2(t)\}$ and $\{X'(s), s\in P_2(t)\}$ are
independent given $X'(t)$. This clearly implies that the
conditional distributions of $\{X(s), s\in F_1(t)\}$ and
$\{X(s), s\in P_1(t)\}$ are independent given $X(t)$. We see
that $X$ on $\GG_1$ is time-Markovian.

Next we will show that $X$ is graph-Markovian on $\GG_1$. Let
$\WW\subset\RR(\GG_1)$ be as in Definition~\ref{m23.1} and let
$\wh\EE_1$ and $\wh\EE_2$ play the roles of $\EE_1$ and
$\EE_2$ in the same definition (in this proof, $\EE_1$ and
$\EE_2$ denote the sets of edges of $\GG_1$ and $\GG_2$). Since
$X'$ is natural, the conditional distribution of $\{X'(t), t \in
E_{jk}\}$ given $\{X'(t), t\in E, E \in\EE_2 \setminus\{
E_{jk}\}\}$ is that of a Markov bridge between $(t_j, X'(t_j))$
and $(t_k, X'(t_k))$. For future reference, let us call this
property (A).

Suppose that $t_j\in E_1, t_k \in E_2$ for some $E_1,E_2\in\wh
\EE_1$, $t_j,t_k \notin\WW$ and let $\wt\EE_1 = \wh\EE_1 \cup
\{E_{jk}\}$. We have assumed that $X'$ is graph-Markovian, so the
conditional distribution of $\{X'(t), t\in E, E\in\wt\EE_1\}$
given $\{X'(t), t\in E, E\in\wh\EE_2\}$ depends only on
$\{X'(t), t\in\WW\}$. This and (A) easily imply that the
conditional distribution of $\{X(t), t\in E, E\in\wh\EE_1\}$
given $\{X(t), t\in E, E\in\wh\EE_2\}$ depends only on
$\{X(t), t\in\WW\}$.

The same argument applies when $t_j\in E_1, t_k \in E_2$ for
some $E_1,E_2\in\wh\EE_1$, $t_j \notin\WW$ and $t_k \in\WW$,
and also in the case when $t_j\in E_1, t_k \in E_2$ for some
$E_1,E_2\in\wh\EE_1$, $t_j \in\WW$ and $t_k \notin\WW$.

Consider the case when $t_j\in E_1, t_k \in E_2$ for some
$E_1,E_2\in\wh\EE_2$ and let $\wt\EE_2 = \wh\EE_2 \cup
\{E_{jk}\}$. We have assumed that $X'$ is graph-Markovian so the
conditional distribution of $\{X'(t), t\in E, E\in\wh\EE_1\}$
given $\{X'(t), t\in E, E\in\wt\EE_2\}$ depends only on
$\{X'(t), t\in\WW\}$. This and (A) easily imply that the
conditional distribution of $\{X(t), t\in E, E\in\wh\EE_1\}$
given $\{X(t), t\in E, E\in\wh\EE_2\}$ depends only on
$\{X(t), t\in\WW\}$.

Note that if $t\in\WW$, then $t\in E_1$ for some $E_1\in\wh
\EE_1$ and $t\in E_2$ for some $E_2\in\wh\EE_2$. Hence, the
only case that remains to be analyzed is when $t_j\in E_1, t_k
\in E_2$ for some $E_1 \in\wh\EE_1 ,E_2\in\wh\EE_2$,
$t_j,t_k \notin\WW$. Since $t_j\in E_1$ for some $E_1 \in\wh
\EE_1$, $t_j \notin\WW$,\vspace*{1pt} and taking into account how $E_{jk}$
was added to $\GG_1$, it follows that there exist $E_{t_1, t_j},
E_{t_j, t_2} \in\wh\EE_1$ such that $t_1, t_j, t_2, t_k$ lie
on a full time path $\sigma_1$. Since the process $X'$ is
natural, the conditional distributions of $\{X'(u), t_1\leq
u\leq t_2, u \in\sigma_1\}$, $\{X'(u), u\in F(t_2)\}$ and
$\{X'(u), u\in P(t_1)\}$ are independent given $X'(t_1)$ and
$X'(t_2)$, and, moreover, the conditional distribution of
$\{X'(u), t_1\leq u\leq t_2, u \in\sigma_1\}$ given $X'(t_1)$ and
$X'(t_2)$ is that of a Markov bridge between $(t_1, X'(t_1))$ and
$(t_2, X'(t_2))$. We will need the following two facts in the next step
of the argument. The first is property (A) defined above. The
second is an application of the graph-Markovian property for~$X'$.
Let $\EE_3$ be the union of all edges that comprises $\{u\dvtx
t_1\leq u\leq t_2, u \in\sigma_1\}$, $F(t_2)$, $P(t_1)$ and
$E_{jk}$. Let $\EE_4$ be the union of all edges such that the
union of $\EE_3$ and $\EE_4$ represents the whole graph $\GG_2$,
and $\WW_1$ is a finite set of points that $\EE_3$ and $\EE_4$
have in common. Note that $t_j, t_k \notin\WW_1$ because all
edges that end at these points belong to $\EE_3$. By the
graph-Markovian property of $X'$, the conditional distribution
of $\{X'(t), t\in E, E\in\EE_4\}$ given $\{X'(t), t\in E, E\in
\EE_3\}$ depends only on $\{X'(t), t\in\WW_1\}$.

Let $\DD_1$ be the distribution of $\{X'(t), t\in P(t_1) \cup
F(t_2)\}$. Let $\DD_2(x_1,x_2)$ be the conditional distribution
of $\{X'(u), t_1\leq u\leq t_2, u \in\sigma_1\}$ given
$\{X'(t_1) = x_1, X'(t_2) = x_2\}$. Let $\DD_3(x_j,x_k)$ be the
conditional distribution of $\{X'(u), u \in E_{jk}\}$ given
$\{X'(t_j) = x_j, X'(t_k) = x_k\}$. Let $\DD_4(\ol x)$ be the
conditional distribution of $\{X'(u), u \in E, E\in\EE_4\}$
given the sequence $\ol x$ of values of $X'$ at all points in
$\WW_1$.

We can construct a process $Y$ on $\GG_2$ with the same
distribution as $X'$ as follows. First, define a process $\{Y(t), t\in
P(t_1) \cup F(t_2)\}$ with
distribution $\DD_1$ on some probability space.
Then define a process $\{Y(u), u \in E, E\in\EE_4\}$ with
distribution $\DD_4(\ol y)$, independent of
$\{Y(t), t\in P(t_1) \cup F(t_2)\}$, except that $\ol y$ is the already
generated sequence of values of $Y$ on $\WW_1$.
Next define an independent (except for the
endpoints) Markov bridge $\{Y(u), t_1\leq u\leq t_2, u \in
\sigma_1\}$ between $(t_1, Y(t_1))$ and $(t_2, Y(t_2))$. This
process has distribution $\DD_2(Y(t_1), Y(t_2))$. Finally define an
independent (except for the endpoints) Markov bridge $\{Y(u), u
\in E_{jk}\}$ between $(t_j, Y(t_j))$ and $(t_k, Y(t_k))$. This
process has distribution $\DD_3(Y(t_j), Y(t_k))$. It follows from
our earlier remarks that $Y$ has the same distribution as $X'$
on $\GG_2$. The point of this construction is that it shows that
given $\{Y(t_1), Y(t_2)\}$, the distribution of $Y$ on $P(t_1)
\cup F(t_2) \cup\EE_4$ is independent of $\{Y(u), t_1\leq u\leq t_2, u
\in\sigma_1\}$. Hence, the
distribution of $X$ on $P(t_1) \cup F(t_2) \cup\EE_4$ is
independent of $\{X(u), t_1\leq u\leq t_2, u \in\sigma_1\}$ given $\{
X(t_1), X(t_2)\}$.

Let $\wt\EE_2 = \wh\EE_2 \cup\{E_{jk}\}$. Since\vspace*{1pt} $X'$ is
graph-Markovian, the conditional distribution of $\{X'(t), t\in
E, E\in\wt\EE_2\}$ given $\{X'(t), t\in E, E\in\wh\EE_1\}$
depends only on $\{X'(t), t\in\WW\cup\{t_j\}\}$. It follows
that the conditional distribution of $\{X(t), t\in E, E\in\wh
\EE_2\}$ given $\{X(t), t\in E, E\in\wh\EE_1\}$ depends only
on $\{X(t), t\in\WW\cup\{t_j\}\}$.
Note that the values of
$\{X(t), t\in E, E\in\wh\EE_1\}$ include the values of
$X(t_1)$ and $ X(t_2)$. Since the
distribution of $X$ on $P(t_1) \cup F(t_2) \cup\EE_4$ is
independent of $\{X(u), t_1\leq u\leq t_2, u \in\sigma_1\}$ given $\{
X(t_1), X(t_2)\}$, we conclude\vspace*{1pt} that the conditional
distribution of $\{X(t), t\in E, E\in\wh\EE_1\}$ given
$\{X(t), t\in E, E\in\wh\EE_2\}$ depends only on $\{X(t), t\in
\WW\}$. This completes the discussion of the last
remaining case of graph-Markovian property for $X$ on~$\GG_1$.

By assumption, the families of random variables $\{X'(t), t\in
E\}$, $E\in\EE_2$, are conditionally independent given
$\{X'(t), t\in\VV_2\}$, and for every $E_{jk}\in\EE_2$, the
conditional distribution of $\{X'(t), t\in E_{jk}\}$ given
$\{X'(t), t\in\VV\}$ is a Markov bridge between $(t_j,
X'(t_j))$ and $(t_k, X'(t_k))$. It is obvious that this implies
that the analogous property holds for $X$ on $\GG_1$. We have
already shown that $X$ is time-Markovian and graph-Markovian on
$\GG_1$, so $X$ is natural on $\GG_1$. By the induction
assumption, $X$ has a unique distribution. Rephrasing what we
said earlier in this paragraph, for $E_{jk}\in\EE_2\setminus
\EE_1$, the conditional distribution of $\{X'(t), t\in E_{jk}\}$
given $\{X'(t), t\in E, E\in\EE_1\}$ is a Markov bridge between
$(t_j, X'(t_j))$ and $(t_k, X'(t_k))$. This determines the
distribution of $X'$ uniquely.
\end{pf*}

Suppose that $\GG= (\VV,\EE)$ is an NCC TLG, and $\VV$ is
infinite. According to the definition of an NCC TLG with an
infinite vertex set, there exists a tower of NCC-graphs $\GG_n =
(\VV_n, \EE_n)$, $n\geq1$, such that each $\VV_n$ is finite,
$\VV= \bigcup_{n\geq1} \VV_n$ and $\EE= \bigcup_{n\geq1}
\EE_n$. Let $X_n$ be the natural $\PP$-process on $\GG_n$. By
Remark~\ref{m25.1}(iv), the restriction of $X_n$ to $\GG_k$,
for $k<n$, has the same distribution as that of $X_k$. A routine
application of Kolmogorov's consistency theorem shows that there
exists a $\PP$-process $X$ on $\GG$ such that its restriction to
any $\GG_k$ has the same distribution as that of $X_k$. Note
that the distribution of $X$ may depend, in principle, on the
sequence $\{\GG_n\}$. We will show that it does not if $\GG$ is
planar. We conjecture that the result holds for all NCC TLGs with
infinite $\VV$.
\begin{theorem}\label{m25.3}
Suppose that $\GG= (\VV,\EE)$ is a planar NCC TLG, and $\VV$ is
infinite. If $X$ and $X'$ are two $\PP$-processes on $\GG$
constructed using two towers of NCC-graphs $\{\GG_n\}_{n\geq1}$
and $\{\GG'_n\}_{n\geq1}$, then $X$ and $X'$ have the same
distributions.
\end{theorem}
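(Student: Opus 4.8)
The plan is to show that the law of $X$ restricted to any fixed finite NCC-subgraph $\GG_0 \subset \GG$ is independent of the tower used in the construction; once this is established for all such $\GG_0$, a routine application of Kolmogorov's consistency theorem (the same one used to construct $X$ and $X'$ in the first place) forces $X$ and $X'$ to have the same distribution, since the finite-dimensional distributions determine the law. So the crux is purely a finite statement: if $\GG_0$ is a finite NCC-subgraph of $\GG$ and $\GG_n, \GG'_m$ are two elements of the respective towers with $\GG_0 \subset \GG_n$ and $\GG_0 \subset \GG'_m$, then the natural $\PP$-process on $\GG_n$ and the natural $\PP$-process on $\GG'_m$ induce the same distribution on $\GG_0$. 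In view of the uniqueness in Theorem \ref{th:nat}(i), it suffices to produce a single finite NCC-graph $\GG_*$ containing $\GG_0$ together with \emph{both} $\GG_n$ and $\GG'_m$ as subgraphs in the sense of a tower, because then the restriction of the (unique) natural process on $\GG_*$ to $\GG_n$ and to $\GG'_m$ agrees with $X_n$ and $X'_m$ respectively (by Remark \ref{m25.1}(iv)), and these in turn agree on $\GG_0$.

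The main obstacle is the construction of this common refinement $\GG_*$, and this is exactly where planarity is essential. Recall from Remark after Theorem \ref{t:s25.1} that the relation $\RR(\GG_1) \subset \RR(\GG_2)$ between NCC-graphs does \emph{not} in general mean they sit inside a common tower. Using planarity, however, I would argue as follows. Choose representations $\RR(\GG), \RR(\GG_n), \RR(\GG'_m)$ all inside $\RR^2$, and moreover inside a fixed planar representation of $\GG$ itself, so that $\RR(\GG_n) \subset \RR(\GG)$ and $\RR(\GG'_m) \subset \RR(\GG)$ as planar sets. Let $\GG_*$ be the TLG whose representation is $\RR(\GG_n) \cup \RR(\GG'_m)$ (adding, as new vertices, all the finitely many points where edges of $\GG_n$ cross edges of $\GG'_m$, together with the endpoints of each graph that lie on edges of the other). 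Because everything lives in a planar representation of the single graph $\GG$, this union is itself (a representation of) a finite planar TLG, hence NCC by Theorem \ref{t:s25.1}(ii). The hard work is verifying that $\GG_*$ so defined really is a legitimate finite TLG (degree and monotonicity conditions) and that $\GG_n, \GG'_m, \GG_0$ are all subgraphs of it in a way compatible with towers.

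For the tower-compatibility I would invoke Theorem \ref{t:s25.1}(i): since $\GG_*$ is a finite NCC-graph, it \emph{is} an NCC$^*$-graph, so it is the top of some tower. What I actually need is the slightly stronger fact, furnished by the \emph{proof} of Step 1 of Theorem \ref{t:s25.1}(i), that any NCC-subgraph of a finite NCC-graph $\GG_*$ whose vertices include the end of a forward-minimal cell can be reached as an intermediate stage of a tower culminating in $\GG_*$; inspecting that inductive construction shows one may prescribe that a given NCC-subgraph appear along the way. Applying this with $\GG_n$ (and separately with $\GG'_m$) as the prescribed subgraph, Remark \ref{m25.1}(iv) yields that the natural process on $\GG_*$ restricts on $\GG_n$ to a natural process on $\GG_n$, which by Theorem \ref{th:nat}(i) must be $X_n$; likewise for $\GG'_m$ and $X'_m$. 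Hence $X_n$ and $X'_m$ agree on the common subgraph $\GG_0$, as desired.

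I expect the genuinely delicate point to be the claim that the prescribed intermediate subgraph can always be inserted into a tower, i.e.\ that ``$\RR(\GG_n) \subset \RR(\GG_*)$ with both NCC'' can be upgraded to ``$\GG_n$ and $\GG_*$ lie in a common tower'' under planarity; the cited remark warns this fails without extra care, so I would lean on the planar structure (every vertex is the start or end of a unique forward- and backward-minimal cell, per the proof of part (ii)) to carry out the insertion cell by cell. Everything else — the reduction to finite subgraphs, the Kolmogorov consistency conclusion, and the appeal to uniqueness — is routine.
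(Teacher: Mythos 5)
Your overall architecture---agree on finite tower elements via a common finite NCC refinement and uniqueness of the natural process (Remark \ref{m25.1}(iv)), then let the finite subgraphs grow---is the same as the paper's. But the key step is not merely ``delicate''; the principle you invoke to carry it out is false. You need $\GG_n$ and your union graph $\GG_*$ to lie in a common tower, and you propose to extract this from Step 1 of the proof of Theorem \ref{t:s25.1}(i), claiming that a given NCC-subgraph can be prescribed as an intermediate stage of a tower culminating in $\GG_*$. The paper's Remark (i) after Theorem \ref{t:s25.1} refutes exactly this, and its counterexample is \emph{planar}: $\GG_1$ (Fig.~\ref{fig2} minus $E_{34}$ and $E_{25}$) reduces, after absorbing degree-2 vertices, to a single simple cell, while $\GG_2$ (Fig.~\ref{fig2} minus $E_{34}$) is that cell plus the chord $E_{25}$ joining interior points of its two branches; both are planar TLG's, hence NCC by Theorem \ref{t:s25.1}(ii), and $\RR(\GG_1)\subset\RR(\GG_2)$, yet no tower contains both, since the only possible addition to $\GG_1$ is $E_{25}$ itself and its endpoints lie on the two parallel branches of the cell, which lie on no common time path as Definition \ref{def:ind}(ii) requires. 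So planarity alone cannot ``carry out the insertion cell by cell.'' What actually rescues the theorem is structure you never use: $\GG_n$ and $\GG'_m$ are not arbitrary NCC subgraphs but come equipped with towers below them \emph{and} continuations above them converging to the same locally finite planar $\GG$ (this is what excludes the cell-plus-chord pathology: a chord between the interiors of the two branches of a cell of a tower element can never be added by tower rules, so no such chord exists in $\GG$). The paper's proof works with exactly this data: it extends the two \emph{given} towers edge by edge, using the planar upper and lower boundary functions $f,g$, and its target is the ``filled'' graph $\KK_{f,g}$ consisting of \emph{all} of $\GG$ between the envelopes---not your bare union $\RR(\GG_n)\cup\RR(\GG'_m)$, since the tower rules can force scaffolding edges lying in neither graph---with the legality of each added step resting on a shielding argument for edges inside the envelopes. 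That construction is the real content of the proof, and it is absent from your proposal.

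A second, independent gap: your $\GG_*$ need not be a TLG at all, so you cannot even quote Theorem \ref{t:s25.1}(ii) to conclude it is NCC. Nothing in the hypotheses forces the initial (or terminal) edges of the two towers to overlap; for instance when $t_0=-\infty$ (as for the honeycomb lattice) the two towers' initial edges may run to $-\infty$ along disjoint paths, and then the union has two ``initial'' ends, violating the requirement that a TLG have a single initial vertex of degree~1. The paper devotes the entire last portion of its proof to this: first the case $\GG_1=\GG'_1$, then towers whose initial and terminal edges overlap (via a chain of full time paths $\JJ_1,\dots,\JJ_q$ with consecutive unions containing a single cell), and finally the general case by rerouting the initial and terminal parts of one tower and observing that this changes only the second coordinate of the representation, hence not the law on a fixed finite edge set $\EE_*$. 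Your proposal addresses none of these cases.
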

\begin{pf}
Suppose that we can prove that for any $\GG_j=(\VV_j, \EE_j)$
and $\GG'_k= (\VV'_k, \EE'_k)$ there exist $n \geq\max(j,k)$
and graphs $\HH_m$, $m=j+1, \ldots, n$, and $\HH'_m$, $m=k+1,\ldots,
n'$ such that $\HH_n = \HH'_{n'}\subset\GG$ and $\GG_1,\ldots, \GG_j,
\HH_{j+1},\ldots, \HH_n$ and $\GG'_1,\ldots,
\GG'_k, \HH'_{k+1},\ldots, \HH'_{n'}$ are towers of NCC-graphs.
By Theorem~\ref{th:nat}(i) and its proof, we can construct a
natural process $Y$ on $\HH_n$ such that its restriction to
$\GG_j$ is~$X$, and we can construct a natural process $Y'$ on
$\HH'_{n'}$ such that its restriction to $\GG'_k$ is $X'$. By
the uniqueness in distribution of the natural process on an
NCC-graph, the distributions of $Y$ and $Y'$ are identical.
Hence, the distributions of $X$ and $X'$ agree on $\EE_j \cap
\EE'_k$. Letting $j,k \to\infty$, we conclude that the
distributions of $X$ and $X'$ agree on $\EE$.

It remains to prove that we can construct sequences $\{\HH_m\}$
and $\{\HH'_m\}$ with the properties listed above. First suppose
that the initial vertices $t_{0,n}\in\VV_n$ and $t'_{0,n}\in
\VV'_n$ and terminal vertices $t_{N_n,n}\in\VV_n$ and
$t'_{N_n,n}\in\VV'_n$ are the\vadjust{\goodbreak} same for all $\GG_n$ and
$\GG'_n$, that is, $t_0 := t_{0,n} = t'_{0,m} $ and $t_\infty:=
t_{N_n,n} = t'_{N_m,m}$ for all $n$ and $m$. Assume also that
the initial edges for both sequences overlap, that is, $E_{t_{0,n},
t_{1,n}} \subset E_{t'_{0,m}, t'_{1,m}}$ or $E_{t'_{0,m},
t'_{1,m}} \subset E_{t_{0,n}, t_{1,n}}$ for all $n$ and $m$.
Similarly, assume that terminal edges overlap, that is,
$E(t_{N_n-1,n}, t_{N_n,n}) \subset E(t'_{N_m-1,m}, t'_{N_m,m})$
or $E(t'_{N_m-1,m}, t'_{N_m,m}) \subset E(t_{N_n-1,n},
t_{N_n,n})$ for all $n$ and $m$. Moreover, we assume that $\GG_1
= \GG'_1$ is the same full time path.

Consider a (planar) representation $\RR(\GG)$ of $\GG$ and
suppose that $\RR(\GG_j) \subset\RR(\GG)$ and $\RR(\GG'_k)
\subset\RR(\GG)$. Recall that a representation of a planar
graph is a set of points $(t, x) \in\R^2$. It is easy to see
that the upper boundary of $\RR(\GG_j)$ is the graph of a
continuous function $f_j\dvtx[t_0, t_\infty] \to\R$, that is, $f_j(t)
= \max\{x\dvtx(t,x) \in\RR(\GG_j)\}$. We similarly define $f'_k$
relative to $\GG'_k$ and let $f = \max(f_j, f'_k)$. We then
define the lower boundary of $\RR(\GG_j)$ as the graph of a
continuous function $g_j\dvtx[t_0, t_\infty] \to\R$, that is, $g_j(t)
= \min\{x\dvtx(t,x) \in\RR(\GG_j)\}$, $g'_k$ as the lower boundary
of $\RR(\GG'_k)$ and $g = \min(g_j, g'_k)$. For any real
functions $a(t)$ and $b(t)$, let the graph $\KK_{a,b}$ be
defined by $\RR(\KK_{a,b}) = \{(t,x) \in\RR(\GG)\dvtx a(t) \leq x
\leq b(t)\}$. Let $\HH_n = \HH'_{n'} = \KK_{f,g} $.

Since $\GG_j$ is an element of an infinite tower of graphs
$\{\GG_m\}$ such that $\bigcup_{m\geq1} \RR(\GG_m) = \RR(\GG)$
and $\GG$ is locally finite, we must have $ \RR(\KK_{f_j, g_j})
\subset\RR(\GG_{m_1})$ for some $m_1 \geq j$. Let $E_1, E_2,\ldots,
E_{m_2}$ be edges added during the inductive construction
of the tower $\{\GG_m\}_{j \leq m \leq m_1}$ and such that their
representations are in $\RR(\GG_{m_1}) \setminus\RR(\KK_{f_j,
g_j})$, listed in the order in which they are added during the
inductive construction. We construct a tower $\GG_j, \HH_{j+1},\ldots,
\HH_{j+ m_2} = \KK_{f_j, g_j}$ by adding edges $E_1, E_2,\ldots,
E_{m_2}$ in the same order (and no other edges). This
construction can proceed according to the rules of the inductive
construction of NCC graphs because edges $E_1, E_2,\ldots,
E_{m_2}$ are shielded by the graphs of the functions $f_j$ and
$g_j$ from all other edges added during the construction of
$\{\GG_m\}_{j \leq m \leq m_1}$. We construct a tower $\GG'_k,
\HH'_{k+1},\ldots, \HH_{k+ m_3} = \KK_{f'_k, g'_k}$ in an
analogous way.\vspace*{2pt}

It remains to define $\HH_{j+ m_2+1},\ldots, \HH_n$. For future
reference, we label the next part of the proof ``Step (I).'' If
$f'_k(t) \leq f_j(t)$ and $g'_k(t) \geq g_j$
for all $t$ then we let $\HH_{j+ m_2+1} = \HH_{j+ m_2}$.
Otherwise, suppose without loss of generality that $f'_k(t) >
f_j(t)$ for some $t$. Let $t_1, t_2,\ldots, t_{m_4}$ be all vertices
in the graph of $f_j$ such that there is an edge in $\HH_n
\setminus\HH_{j+ m_2}$ ending in $t_r$, for $r=1,\ldots,
m_4$. The first such edge must go from $t_1$ forward in time, and
the last such edge must end in $t_{m_4}$. Hence, there must be a
pair of vertices $t_r$ and $t_{r+1}$ such that there is an edge
$E_r$ in $\HH_n \setminus\HH_{j+ m_2}$ starting from $t_r$ and
an edge $E_{r+1}$ (possibly the same edge) ending in $t_{r+1}$.
By the planarity of $\GG$, there must be a time path $\sigma$
from $t_r$ to $t_{r+1}$ in $\HH_n$ containing $E_r$ and
$E_{r+1}$. We add $\sigma$ (treated as a single edge) to
$\HH_{j+ m_2}$ and thus obtain $\HH_{j+ m_2+1}$.

If $f'_k(t) \geq f_j(t)$ and $g'_k(t) \leq g_j(t)$ for all $t$, then
we let $\HH'_{j+ m_3+1} = \HH'_{j+ m_3}$. Otherwise, we generate
$\HH'_{j+ m_3+1}$ in a way analogous to that used to construct
$\HH_{j+ m_2+1}$.

If $f'_k(t) = f_j(t)$ and $g'_k(t) = g_j(t)$ for all $t$, then we
let $\HH_n = \HH'_{n'} = \HH_{j+ m_2}$. In this case, we are
done. Otherwise, we have constructed towers of NCC-graphs
\[
\GG_1,\ldots, \GG_j,\qquad \HH_{j+1},\ldots, \HH_{j+
m_2+1}
\]
and
\[
\GG'_1,\ldots, \GG'_k,\qquad
\HH'_{k+1},\ldots, \HH'_{k+ m_3+1}
\]
such that either
$\HH_{j+ m_2+1}$ is strictly greater than $\GG_j$ or $\HH'_{k+
m_3+1}$ is strictly greater than $\GG'_k$, or both. Moreover,
the TLG analogous to $\KK_{f, g}$ but defined relative to
$\HH_{j+ m_2+1}$ and $\HH'_{k+ m_3+1}$ in place of $\GG_j$ and
$\GG'_k$ is the same as $\KK_{f, g}$.

We now proceed in an inductive way. Suppose that we constructed
$\HH_{r_1}$ and $\HH'_{r_2}$. Let $f_{r_1}$ represent the upper
boundary of $\HH_{r_1}$, let $f'_{r_2}$ represent the upper
boundary of $\HH'_{r_2}$, let $g_{r_1}$ represent the lower
boundary of $\HH_{r_1}$ and let $g'_{r_2}$ represent the lower
boundary of $\HH'_{r_2}$. We now repeat Step (I) with $f_j$
replaced by $f_{r_1}$, $f'_k$ replaced by $f'_{r_2}$, $g_k$
replaced by $g_{r_2}$ and $g'_k$ replaced by $g'_{r_2}$. This
will generate towers
\[
\GG_1,\ldots, \GG_j,\qquad
\HH_{j+1},\ldots, \HH_{r_1+1} \quad\mbox{and}\quad
\GG'_1,\ldots, \GG'_k, \qquad\HH'_{k+1},\ldots, \HH'_{r_2+1}.
\]
If $\HH_{r_1+1} = \HH'_{r_2+1} = \HH_n = \HH'_{n'}$,
then we are done. Otherwise $\HH_{r_1+1}$ is strictly greater
than $\HH_{r_1}$, or $\HH'_{r_2+1}$ is strictly greater than
$\HH'_{r_2}$, or both. The growth cannot continue forever
because $\KK_{f, g}$ has a finite number of edges, so eventually
we will have $\HH_{r_1+1} = \HH'_{r_2+1} = \HH_n = \HH'_{n'}$.

Next we will argue that one can drop the assumption that $\GG_1
= \GG'_1$ is the same full time path (but we keep the assumption
about overlapping of the initial edges and terminal edges of $\GG_j$'s and
$\GG'_k$'s). Suppose that $\RR(\GG_1) \cup
\RR(\GG'_1)$ contains only one cell. Then the cell has no edges inside.
Then the
argument given above will work under this weakened assumption
because $\RR(\HH_{j+m_2}) \cup\RR(\HH'_{k+m_3})$ will contain
all edges between the graphs of $f$ and~$g$.

A rather easy but tedious argument based on ideas used earlier
in this proof shows that for any two graphs (full time paths)
$\GG_1 $ and $ \GG'_1$ with initial edges and terminal edges
overlapping there exists a sequence of graphs $\JJ_1= \GG_1,
\JJ_2,\ldots, \JJ_q = \GG'_1$ such that $\RR(\JJ_r) \cup
\RR(\JJ_{r+1})$ contains only one cell, for
every $r$. This shows that $X$ and $X'$ have the same
distributions if $\GG_1 $ and $ \GG'_1$ have overlapping initial
and terminal edges.

Finally, we will show how to eliminate the assumption that the
initial and terminal edges of $\GG_1 $ and $ \GG'_1$ are
overlapping. Suppose that $\EE_* \subset\EE$ is a finite set,
and $\{\GG_n\}$ and $\{\GG'_n\}$ are two towers of NCC-graphs
increasing to $\GG$. Let $n_1$ be such that $\RR(\EE_*) \subset
\RR(\GG_{n_1}) \cap\RR(\GG'_{n_1})$. Let $t_*$ be a vertex such
that $t_* \in\VV$, $t_* \in\RR(\GG_1)$, and $t_*$ lies to the
left of all the vertices in $\VV_{n_1}$ and $\VV'_{n_1}$, except
the initial vertices. Note that there exists $n_2$ so large that
$t_* \in\VV'_{n_2} $. This shows that there is a time path
$\sigma'$ with the initial edge overlapping with the initial
edge of $\GG'_1$ and ending at $t_*$. Let $\sigma$ be the
initial part of $\GG_1$, between\vadjust{\goodbreak} $t_0$ and $t_*$. Let
$\GG''_{n_1}$ be the graph $\GG'_{n_1}$ with $\sigma'$ replaced
by~$\sigma$. Note that $\GG''_{n_1}$ is an NCC-graph because we
only changed the second coordinate of the representation of
$\GG'_{n_1}$ for a part of the graph. Let $X''$ be the natural
process on $\GG''_{n_1}$. The distribution of $X''$ on $\EE_*$
is the same as that of $X'$ because, once again, we only changed
the second coordinate of the representation of $\GG'_{n_1}$
for a part of the graph. We now modify the terminal part of
$\GG''_{n_1}$ to obtain an NCC-graph $\GG'''_{n_1}$ such that
the initial and terminal edges of $\GG'''_{n_1}$ and $\GG_{n_1}$
are overlapping. Let $X'''$ be the natural process on
$\GG'''_{n_1}$. The distribution of $X'''$ on $\EE_*$ is the
same as that of $X'$. And this is the same distribution as the
distribution of $X$ on $\EE_*$, by the first part of the proof.
Since $\EE_*$ is an arbitrary finite subset of $\EE$, we see
that $X$ and $X'$ have the same distributions.
\end{pf}
\begin{definition}
If $\GG= (\VV,\EE)$ is an NCC TLG with infinite $\VV$ and $X$
is a process on $\GG$ with the distribution as in Theorem
\ref{m25.3}, then we will call $X$ \textit{natural}.
\end{definition}

\section{Brownian motion on time-like graphs}\label{sec:BM}

In this section $\PP$ refers to the distribution of standard
Brownian motion. We will consider a TLG with a finite vertex set $\VV$,
$t_0 =0$ and $t_N=1$. The $\PP$-process $X$ on a TLG $\GG$ is a mean
zero Gaussian process so it is completely specified by its
covariance structure.
\begin{prop}\label{cov} If $(\sigma(j,j_{n_1},\ldots, j_{n_2},
k, j_{n_3},\ldots, j_{n_4}, n), \sigma(j,j_{n_5},\ldots,
j_{n_6}$, $m,j_{n_7},\ldots, j_{n_8},n))$ is a simple cell
of a TLG $\GG$, and $X$ is a natural Brownian motion on $\GG$,
then
\begin{eqnarray*}
&&\E( X_{j,j_{n_1},\ldots, j_{n_2}, k,
j_{n_3},\ldots, j_{n_4}, n}(\V_k)X_{j,j_{n_5},\ldots,
j_{n_6},m,j_{n_7},\ldots, j_{n_8},n}(\V_m) ) \\
&&\qquad= \V_j +
\frac{(\V_k-\V_j) (\V_m-\V_j)}{ (\V_n-\V_j)}.
\end{eqnarray*}
\upqed\end{prop}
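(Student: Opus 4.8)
The plan is to condition on the values of $X$ at the start $\V_j$ and the end $\V_n$ of the cell and to exploit the structural properties already established for the natural process. Write $\sigma_1$ for the time path of the cell that passes through $\V_k$ and $\sigma_2$ for the one through $\V_m$. By Theorem \ref{th:nat} (i) the natural Brownian motion $X$ is cell-Markovian, so, the cell $(\sigma_1,\sigma_2)$ being simple, the processes $X_{\sigma_1}$ and $X_{\sigma_2}$ are conditionally independent given $X(\V_j)$ and $X(\V_n)$. In particular $X(\V_k)$ and $X(\V_m)$ are conditionally independent given $(X(\V_j),X(\V_n))$, so by the tower property $\E(X(\V_k)X(\V_m)) = \E\bigl[\E(X(\V_k)\mid X(\V_j),X(\V_n))\,\E(X(\V_m)\mid X(\V_j),X(\V_n))\bigr]$.

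Next I would identify the two inner conditional means. Since $\V_j$ and $\V_n$ are the start and end of a cell, the whole of $\sigma_1$ lies on some full time path, along which $X$ is a standard Brownian motion; hence by Remark \ref{m22.1} (i) the restriction of $X$ to $\sigma_1$, given $X(\V_j)$ and $X(\V_n)$, is a Brownian bridge between $(\V_j,X(\V_j))$ and $(\V_n,X(\V_n))$. The conditional mean at the intermediate time $\V_k$ is therefore the linear interpolation $\E(X(\V_k)\mid X(\V_j),X(\V_n)) = X(\V_j) + \frac{\V_k-\V_j}{\V_n-\V_j}\bigl(X(\V_n)-X(\V_j)\bigr)$, and the analogous identity holds for $X(\V_m)$ along $\sigma_2$ with $\V_k$ replaced by $\V_m$.

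I would then record the second moments of the pair $(X(\V_j),X(\V_n))$. These two vertices lie on a common full time path on which $X$ is a standard Brownian motion started at $X(0)=0$, so $\E X(\V_j)=\E X(\V_n)=0$, $\E X(\V_j)^2=\V_j$, $\E X(\V_n)^2=\V_n$ and $\E\bigl(X(\V_j)X(\V_n)\bigr)=\V_j\wedge\V_n=\V_j$. Substituting the two interpolation formulas into the tower-property identity and expanding is then a routine Gaussian computation: writing $\alpha=(\V_k-\V_j)/(\V_n-\V_j)$ and $\beta=(\V_m-\V_j)/(\V_n-\V_j)$, the coefficient of $\V_j$ collapses to $1-\alpha\beta$ and the coefficient of $\V_n$ is $\alpha\beta$, so the expectation equals $\V_j+\alpha\beta(\V_n-\V_j)=\V_j+\frac{(\V_k-\V_j)(\V_m-\V_j)}{\V_n-\V_j}$, which is exactly the claimed value.

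The main obstacle is not the algebra but the two structural inputs. First, one must be sure that simplicity of the cell together with cell-Markovianity genuinely yields the conditional independence of $X(\V_k)$ and $X(\V_m)$ given the endpoint values; second, one must verify that the conditional law of $X$ along $\sigma_1$ (respectively $\sigma_2$) given those endpoint values is a Brownian bridge depending on no further information, which is what guarantees that each inner conditional mean is a function of $(X(\V_j),X(\V_n))$ alone. Both facts are supplied by Theorem \ref{th:nat} (i) and Remark \ref{m22.1} (i), so once these are invoked the remainder reduces to the short moment calculation sketched above.
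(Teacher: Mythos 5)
Your proposal is correct and follows essentially the same route as the paper's proof: both invoke the cell-Markovian property of the natural Brownian motion to decouple the two arms of the simple cell given the endpoint values, identify the conditional laws along each arm as Brownian bridges (so the conditional means are the linear interpolations $(1-\alpha)X(\V_j)+\alpha X(\V_n)$ and $(1-\beta)X(\V_j)+\beta X(\V_n)$), and finish with the same Gaussian moment computation using $\E X(\V_j)^2=\V_j$, $\E(X(\V_j)X(\V_n))=\V_j$, $\E X(\V_n)^2=\V_n$. The only cosmetic difference is that the paper phrases the conditional independence as an explicit representation via two independent Brownian bridges $W$, $W'$, whereas you phrase it through the tower property; the content is identical.
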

\begin{pf}
We will abbreviate $X'= X_{j,j_{n_1},\ldots, j_{n_2}, k,
j_{n_3},\ldots, j_{n_4}, n}$ and
\[
X''= X_{j,j_{n_5},\ldots,
j_{n_6},m,j_{n_7},\ldots, j_{n_8},n}.
\]
By the cell-Markovian property
of the process, we can represent the
joint distribution of $X'$ and $X''$ as follows. Let $W$ and
$W'$ be independent Brownian bridges on the interval
$[\V_j,\V_n]$; in other words, $W$ and $W'$ are independent
Brownian motions conditioned by $W_{\V_j}=W'_{\V_j}= 0$ and
$W_{\V_n}= W'_{\V_n}= 0$. Let $\alpha= (\V_k-\V_j) /
(\V_n-\V_j)$ and $\beta= (\V_m-\V_j) / (\V_n-\V_j)$. Then the
conditional distribution of $X'(\V_k)$ given $\{X'(\V_j) = x_j,
X'(\V_n) = x_n \}$ is the same as the distribution of
$(1-\alpha) x_j + \alpha x_n + W_{\V_k}$. Moreover, the joint
distribution of $(X'(\V_k), X''(\V_m))$ given $\{X'(\V_j) = x_j,
X'(\V_n) = x_n \}$ is the same as the distribution of
$((1-\alpha) x_j + \alpha x_n + W_{\V_k}, (1-\beta) x_j + \beta
x_n + W'_{\V_m})$. Therefore,
\begin{eqnarray*}
&&\E\bigl(
X'(\V_k)X''(\V_m) \mid X'(\V_j) = x_j, X''(\V_n) = x_n\bigr)\\
&&\qquad= \E\bigl( \bigl((1-\alpha) x_j + \alpha x_n + W_{\V_k}\bigr)\bigl( (1-\beta)
x_j + \beta x_n + W'_{\V_m}\bigr)\bigr)\\
&&\qquad= \bigl((1-\alpha) x_j +\alpha
x_n \bigr)\bigl( (1-\beta) x_j + \beta x_n\bigr)
\end{eqnarray*}
and
\begin{eqnarray*}
&&\E( X'(\V_k)X''(\V_m) ) \\
&&\qquad= \E
\bigl(\bigl((1-\alpha) X'(\V_j) +\alpha X'(\V_n) \bigr) \bigl( (1-\beta)
X'(\V_j) + \beta X'(\V_n)\bigr)\bigr)\\
&&\qquad= \bigl((1-\alpha)(1-\beta) +
(1-\alpha)\beta+ \alpha(1-\beta)\bigr) \V_j + \alpha\beta\V_n\\
&&\qquad=
(1-\alpha\beta) \V_j + \alpha\beta\V_n = \V_j + \alpha\beta
(\V_n - \V_j)\\
&&\qquad= \V_j + \frac{(\V_k-\V_j) (\V_m-\V_j)}{
(\V_n-\V_j)^2} (\V_n - \V_j) \\
&&\qquad= \V_j + \frac{(\V_k-\V_j)
(\V_m-\V_j)}{ (\V_n-\V_j)}.
\end{eqnarray*}
\upqed\end{pf}
\begin{pf*}{Proof of Theorem~\ref{th:nat}\textup{(ii)}}
Suppose that for a TLG $\GG$, there exist simple coterminal
cells $(\sigma_1,\sigma_2)$ with endpoints $t_1<t_2$, and
$(\sigma_3,\sigma_4)$ with endpoints \mbox{$t_3<t_4$}. Moreover, either
$t_1 < t_3$ or $t_2 < t_4$. Assume that there exists a natural
Brownian motion $X$ on $\GG$. We will show that this assumption
leads to a contradiction.

We will assume without loss of generality that $\V_{2}<\V_{4}$
and $\V_1=\V_3$. The first edge of $\sigma_1$, say, $E_{1k}$,
must be the same as the first edge of $\sigma_3$ or the first
edge of~$\sigma_4$. Suppose without loss of generality that the
first edge of $\sigma_1$ is the same as the first edge of
$\sigma_3$. Then the first edge of $\sigma_2$, say $E_{1m}$, is
the same as the first edge of~$\sigma_4$. Then we can use
Proposition~\ref{cov} to express the covariance of $X$ at
vertices $\V_k$ and~$\V_m$. If we use the formula relative to
the cell $(\sigma_1,\sigma_2)$, then the answer is
\[
\V_1 + \frac{(\V_k-\V_1) (\V_m-\V_1)}{ (\V_{2}-\V_1)}.
\]
If we apply the same proposition relative to the
cell $(\sigma_3,\sigma_4)$, then we obtain a different answer,
\[
\V_1 + \frac{(\V_k-\V_1) (\V_m-\V_1)}{
(\V_{4}-\V_1)}.
\]
This contradiction shows that there
is no natural Brownian motion on $\GG$.
\end{pf*}

\section{Graph martingales and Harnesses}\label{sec:harn}

The previous computation of Brownian covariance in Section
\ref{sec:BM} can be extended to a class of processes called
harnesses. This class of processes, which includes all
integrable L\'evy processes and their bridges, was introduced
originally by Hammersley \cite{H67}. We follow the definition
given in the article by Mansuy and Yor \cite{MY05}.
\begin{definition} Suppose that $\TT\subset\R$ is a
bounded or unbounded
interval, and let $\{ H(t) , t \in\TT\}$ be an
integrable process for all $t$ whose sample paths are RCLL (right
continuous with left limits)
almost surely.\vadjust{\goodbreak}
Consider a past-future
filtration $(\ppf_{t,T}, t < T; t,T\in\TT)$, with the
property that
\[
\sigma\{ H(s); s \le t \mbox{ and }
s \ge T \} \subset\ppf_{t,T} \quad\mbox{and}\quad
\ppf_{t_1, T_1} \subseteq\ppf_{t, T},\qquad t_1 \le t < T \le T_1.
\]
The process $H$ is said to be a \textit{harness} with respect to
the filtration $(\ppf_{t,T}, t < T; t,T\in\TT)$ if, for all
$a < b < c < d$, we have
%
%
\begin{equation}\label{eq:hness} \E\biggl( \frac{H(c)
- H(b)}{c - b} \biggm|\ppf_{a,d} \biggr) = \frac{H(d) -
H(a)}{d-a}.
\end{equation}
\end{definition}

The equality in (\ref{eq:hness}) may also be reformulated as:
$H$ is a harness if and only if for all $s < t < u$, we get
%
%
\begin{equation}\label{eq:hness2} \E( H(t) \mid\ppf_{s,u}
) =
\frac{t-s}{u-s} H(u) + \frac{u-t}{u-s} H(s).
\end{equation}
The following lemma
establishes more path properties.
\begin{lemma}\label{lem:propharness} Let $\{ Y(t), t \in
[0,1] \}$ be a harness with respect to some past-future
filtration $(\ppf_{t,T}, t < T; t,T\in[0,1])$. Then the
following properties hold:
\begin{longlist}
\item The set of
random variables $\{ Y(s), 0\le s \le1\}$ is uniformly
integrable;
\item $Y$ is continuous in probability, that is,
for any $0 \le t \le1$ we have
%
%
\begin{equation}\label{eq:cont} P\Bigl(
\lim_{s\rightarrow t} Y(s)=Y(t) \Bigr)=1.
\end{equation}
\end{longlist}
\end{lemma}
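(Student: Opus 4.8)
The plan is to distill from the harness identity \eqref{eq:hness2} two auxiliary processes that are honest martingales for the past--future filtration, and then to read off both assertions from elementary martingale theory. Recall from the definition that $Y(s)$ and $Y(T)$ are $\ppf_{s,T}$-measurable. I would set
\[
\phi(s)=\frac{Y(1)-Y(s)}{1-s},\qquad \psi(u)=\frac{Y(u)-Y(0)}{u}.
\]
A one-line computation from \eqref{eq:hness2}, using the measurability of $Y(0)$ and $Y(1)$, shows that $\E(\phi(s_2)\mid\ppf_{s_1,1})=\phi(s_1)$ whenever $s_1<s_2$, and $\E(\psi(u_2)\mid\ppf_{0,u_1})=\psi(u_1)$ whenever $u_2<u_1$; that is, $\phi$ is a forward martingale for the increasing filtration $(\ppf_{s,1})_s$ and $\psi$ is a backward martingale for the decreasing filtration $(\ppf_{0,u})_u$.

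For part (i), fix the interior point $1/2$. The martingale relations give the closed representations $\phi(s)=\E(\phi(1/2)\mid\ppf_{s,1})$ for $s\le 1/2$ and $\psi(u)=\E(\psi(1/2)\mid\ppf_{0,u})$ for $u\ge 1/2$, where $\phi(1/2)$ and $\psi(1/2)$ are fixed integrable random variables. Since the conditional expectations of a single integrable variable always form a uniformly integrable family, $\{\phi(s):0\le s\le 1/2\}$ and $\{\psi(u):1/2\le u\le 1\}$ are uniformly integrable; rewriting $Y(s)=Y(1)-(1-s)\phi(s)$ and $Y(u)=Y(0)+u\psi(u)$ and using $0\le 1-s,u\le 1$ transfers uniform integrability to $\{Y(s):s\le 1/2\}$ and $\{Y(u):u\ge 1/2\}$, whose union is $\{Y(t):0\le t\le 1\}$. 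Splitting at an interior point is unavoidable: neither $\phi$ nor $\psi$ is even $L^1$-bounded at its bad endpoint (for Brownian motion $\E|\psi(u)|\sim c/\sqrt{u}$ as $u\downarrow 0$), so no single one of these processes controls the whole interval.

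For part (ii), the RCLL hypothesis provides right-continuity and a left limit $Y(t_0-)$ at each $t_0$, so \eqref{eq:cont} reduces to showing that there is no fixed jump, i.e.\ $Y(t_0-)=Y(t_0)$ a.s.\ for $t_0\in(0,1]$ (the point $t_0=0$ being covered by right-continuity alone). I would apply \eqref{eq:hness2} with conditioning field $\ppf_{s,t_0}$: for $s<t<t_0$,
\[
\E\bigl(Y(t)\mid\ppf_{s,t_0}\bigr)=\frac{t-s}{t_0-s}\,Y(t_0)+\frac{t_0-t}{t_0-s}\,Y(s).
\]
Letting $t\uparrow t_0$, part (i) together with a.s.\ left-continuity of the paths gives $Y(t)\to Y(t_0-)$ in $L^1$, so the left side tends to $\E(Y(t_0-)\mid\ppf_{s,t_0})$ while the right side tends to $Y(t_0)$; hence $\E(Y(t_0-)\mid\ppf_{s,t_0})=Y(t_0)$ for every $s<t_0$. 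Now let $s\uparrow t_0$. The fields $\ppf_{s,t_0}$ increase, and $Y(t_0-)$ is measurable with respect to $\bigvee_{s<t_0}\ppf_{s,t_0}$ (each $Y(r)$ with $r<t_0$ lies in $\ppf_{s,t_0}$ as soon as $s\in(r,t_0)$), so L\'evy's upward convergence theorem forces $\E(Y(t_0-)\mid\ppf_{s,t_0})\to Y(t_0-)$, whereas the right side is the constant $Y(t_0)$. Therefore $Y(t_0-)=Y(t_0)$ a.s., which with right-continuity yields \eqref{eq:cont}.

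The routine parts are the two martingale computations and the algebraic rewritings of $Y$ in terms of $\phi$ and $\psi$. The delicate points, where I expect the real work, are: choosing the correct closed-martingale structure and splitting the interval at an interior point in (i); and, in (ii), controlling the filtration limit $\bigvee_{s<t_0}\ppf_{s,t_0}$ together with the measurability of $Y(t_0-)$, and upgrading the a.s.\ path limits to $L^1$ limits (this is exactly where part (i) is used) so that the two sides of the harness identity can legitimately be equated.
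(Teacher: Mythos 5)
Your proof is correct, and while part (i) coincides with the paper's argument, part (ii) takes a cleaner route, so a comparison is worthwhile. Part (i) is the paper's own proof in martingale dress: expanding $\phi(1/2)=2\left(Y(1)-Y(1/2)\right)$, your identity $Y(s)=Y(1)-(1-s)\,\E\left(\phi(1/2)\mid\ppf_{s,1}\right)$ is literally the paper's rearrangement $Y(s)=2(1-s)\,\E\left(Y(1/2)\mid\ppf_{s,1}\right)-(1-2s)Y(1)$, and both texts split $[0,1]$ at $1/2$ and treat the two halves symmetrically. In part (ii) the overall strategy is the same (reduce to the absence of fixed left jumps, use part (i) to upgrade the a.s.\ left limit to an $L^1$ limit, pass to the limit in \eqref{eq:hness2}, then invoke martingale convergence), but the execution differs: the paper conditions on $\ppf_{s,T}$ with $T$ strictly greater than $t$, derives $\E\left(Y(t-)\mid\ppf_{s,T}\right)=\E\left(Y(t)\mid\ppf_{s,T}\right)$, and then needs two successive limit passages ($T\downarrow t$, then $s\uparrow t$), the limit fields $\ppf_{s,t+}$ and $\ppf_{t-,t+}$, and the right-continuity of the paths to see that $Y(t)$ is measurable with respect to the final field. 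You instead condition on $\ppf_{s,t_0}$ itself, where $Y(t_0)$ is measurable by the very definition of a past--future filtration, so after letting $t\uparrow t_0$ the identity already reads $\E\left(Y(t_0-)\mid\ppf_{s,t_0}\right)=Y(t_0)$, and a single application of L\'evy's upward theorem as $s\uparrow t_0$ finishes, since $Y(t_0-)$ is $\bigvee_{s<t_0}\ppf_{s,t_0}$-measurable. What this buys: one limit instead of two, right-continuity used only to reduce \eqref{eq:cont} to left limits, and no need for the field $\ppf_{s,t+}$ --- which the paper writes as $\cap_{T>t}\ppf_{s,T}$ even though the fields $\ppf_{s,T}$ \emph{increase} as $T\downarrow t$, so the intended object there is really a join; your route sidesteps that wrinkle entirely.
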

\begin{pf} To prove (i), consider the collection of random
variables $\{ Y(s),\break 0\le s\le1/2 \}$. By (\ref{eq:hness2}),
for $t=1/2$, $u=1$, we get
\[
\frac{1-2s}{2(1-s)} Y(1) +
\frac{1}{2(1-s)} Y(s) = \E\bigl( Y(1/2) \mid\ppf_{s,1} \bigr).
\]
As $s$ varies between $0$ and $1/2$, the collection of
conditional expectations on the right is clearly uniformly
integrable. Thus, by rearranging terms and noting that $Y_1$ is
integrable, we get $\{ Y(s), 0\le s \le1/2 \}$ is also
uniformly integrable. By a similar argument one gets uniform
integrability of $\{ Y(s), 1/2 \le s \le1\}$, and this shows
uniform integrability of the entire process.

For (ii), recall that we consider only right continuous harnesses.
So it remains to
prove that $Y$ is continuous in probability from the
left at time $t$. By applying (\ref{eq:hness2}), for any $s < u < t <
T$ we get
%
%
\begin{equation}\label{nov29.1}
\E( Y(u) \mid\ppf_{s,T}
) = \frac{u-s}{T-s} Y(T) + \frac{T-u}{T-s} Y(s).
\end{equation}
Now we take $u$ approaching $t$ from the left. By uniform integrability
we get
\begin{eqnarray*}
\lim_{u\uparrow t} \E( Y(u) \mid\ppf_{s,T})&=& \E( Y(t-) \mid\ppf_{s,T})
= \frac{t-s}{T-s} Y(T) + \frac{T-t}{T-s} Y(s)\\
&=& \E( Y(t)
\mid\ppf_{s,T}).
\end{eqnarray*}
In other words, for all $s < t < T$ we get $\E( Y(t-) \mid\ppf_{s,T}
)=\E( Y(t) \mid\ppf_{s,T}
)$.
Now we take $T \downarrow t$ and use martingale convergence theorem
(see \cite{KS}, page~18) to claim $\E( Y(t-) \mid\ppf
_{s,t+}
)=\E( Y(t) \mid\ppf_{s,t+} )$, where $\ppf_{s, t+}=
\bigcap_{T
> t}\ppf_{s,T}$. Finally we take $s \uparrow t$ and the martingale
convergence theorem to claim
\[
\E( Y(t-) \mid\ppf_{t-,t+})=\E( Y(t) \mid\ppf_{t-,t+}
).
\]
Here $\ppf_{t-,t+} = \bigvee_{s < t} \ppf_{s, t+}$. But $Y(t-)$ is
obviously measurable with respect to $\ppf_{t-, t+}$ and $Y(t+)$ is
also measurable due to assumed right continuity. Hence $Y(t-)=Y(t)$
almost surely.
This shows (\ref{eq:cont}) and completes the proof of the lemma.
\end{pf}

To discuss the properties of a harness on a TLG $\GG$ we need to
introduce a few definitions. Recall that a path in a graph is
any sequence of vertices $(k_1, \ldots, k_m)$ such that adjacent
vertices $(k_j, k_{j+1})$ have edges in the graph $\GG$. The
only difference between a path and a time-path is that we do not
require the vertices to be increasing.
\begin{definition}\label{def:spine}
Let $\GG$ be an NCC TLG with finite $\VV$.
Consider a full time path $\sigma^*=\sigma(k_1, \ldots, k_n)$
and a point $t^*\in E_{j^*k^*}$. Consider the subgraph
$\GG^*=(\VV^*, \EE^*)$ where $\VV^*$ consists of all vertices $v
\in\GG$ such that there exists a path starting at $v$ and
ending at $j^*$ or $k^*$, and the path does not include any
vertex in $\sigma^*$. The edges of this subgraph are the edges
in $\GG$ such that both its vertices are included in $\VV^*$.
The full time path $\sigma^*$ will be called a \textit{support}
for $t^*$ if the subgraph $\GG^*$ is a tree. In other words, if
we remove the time path $\sigma^*$ from the graph $\GG$, then
the connected component of the remaining subgraph that contains
$t^*$ is a tree.
\end{definition}

Let $\PP$ denote the law of a Markovian harness in $[0,1]$.
Consider a natural $\PP$-process on an NCC TLG $\GG$. Suppose a full
time path $\sigma^*=\sigma(k_1, \ldots, k_n)$ is a support for a
nonvertex point $t^*$
on an edge $E_{j^*k^*}$. We want to know what $E( X_{j*k*}(t^*)
\mid X(t), t \in\sigma^*)$ is. The answer will be expressed
using a filtration constructed as follows.

Let $\WW_1$ denote the two vertices $\{ t_{j^*}, t_{k^*}\}$. Let
$\GG_1$ denote the subgraph of $\GG$ with the edge $E_{j^*k^*}$
removed. Or, equivalently, in any representation of $\GG$, we
remove the interior of the set $E_{j^*k^*}$. Let $\HH_1$ denote
the $\sigma$-algebra generated by the set of all random
variables $\{X_E(t), t\in E, E \in\GG_1\}$. Note that the
vertices $j^*$ and $k^*$ have degree two in the graph $\GG_1$
since the common edge gets deleted.

Now we proceed by induction. Suppose we have constructed $\WW_m,
\GG_m$, and $\HH_m$ such that every $t_i\in\WW_m$ has degree
two in the graph $\GG_m$. To construct $\WW_{m+1}$, consider
sequentially every vertex $t_i \in\WW_m$. If $t_i$ is a vertex
in $\sigma^*$ (i.e., $t_i$ is one of $\{t_{k_1}, \ldots,
t_{k_n}\}$), then $t_i$ continues to be in $\WW_{m+1}$. This, in
particular, holds true if $t_i$ is $0$ or $1$ which are in
$\sigma^*$. In this case we define the set of
\textit{descendants of $t_i$}, $\NN(t_i)$, as the singleton set\vadjust{\goodbreak}
$\{t_i\}$. Otherwise, $\NN(t_i)$ consists of the two distinct
neighbors of $t_i$ in the graph $\GG_m$. We define the set
$\WW_{m+1}$ as
\[
\WW_{m+1} = \bigcup_{t_i \in\WW_m} \NN(t_i).
\]
The subgraph $\GG_{m+1}$ is obtained from $\GG_m$ by deleting
all the vertices of $\WW_m$ not included in $\WW_{m+1}$ and all
their incident edges. The $\sigma$-algebra $\HH_{m+1}$ is
defined to be the one generated by all the random variables $\{
X_E(t), t \in E, E \in\GG_{m+1} \}$.

We stop the inductive process at the first $K$ when all vertices
in $\WW_K$ are in $\sigma^*$, which gives us a backward
filtration
\[
\HH_K \subset\HH_{K-1} \subset\cdots\subset
\HH_1.
\]

\begin{lemma}\label{a8.1}
Suppose that $\sigma^*$ is a support of $t^*$ so, by definition, $\GG
^*$ is a tree.

\begin{longlist}
\item Unless $t_i$ is in $\sigma^*$, it
cannot have a descendant already present in~$\WW_m$.

\item Any $v \in\WW_{m+1}$ which is not
included in $\sigma^*$ has exactly two neighbors in the graph
$\GG_{m+1}$.

\item There exists a tower of NCC graphs
$(\GG'_1, \GG'_2,\ldots, \GG'_M, \GG_K,\ldots, \GG_{K-1},\ldots,$ $\GG
_{K-2},\ldots, \GG_1, \GG)$, where $\GG'_1$ is a graph with a single
time path. In other words, every graph $\GG_m$, $m = 1,\ldots, K$, is
an element of this tower of NCC graphs. It is not necessarily true that
$\GG_m$'s are consecutive elements in this tower.
\end{longlist}
\end{lemma}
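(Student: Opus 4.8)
The plan is to read the construction as a breadth-first \emph{peeling} of the tree $\GG^*$ outward from $\{t_{j^*},t_{k^*}\}$ toward $\sigma^*$, prove the two structural facts (i) and (ii) about this peeling, and then assemble the tower in (iii) from the inside out, realizing each peeled layer as a short sequence of the elementary moves of Definition \ref{def:ind}(ii).

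First I would record that every vertex of $\WW_m$ off $\sigma^*$ lies in $\VV^*$ and is reached from $\{t_{j^*},t_{k^*}\}$ by a path inside $\GG^*$ avoiding $\sigma^*$, since $\WW_1=\{t_{j^*},t_{k^*}\}$ and the recursion replaces each off-spine vertex by its two neighbors. For (i), suppose $t_i\in\WW_m\setminus\sigma^*$ had a descendant $v\in\NN(t_i)$ already in $\WW_m$; tracing the exploration paths of $v$ and of $t_i$ back to $\{t_{j^*},t_{k^*}\}$ and appending the edge $\{t_i,v\}$ produces a cycle inside $\GG^*$, contradicting that $\GG^*$ is a tree (note that the two subtrees hanging off $t_{j^*}$ and $t_{k^*}$ are disconnected in $\GG_1$ because $E_{j^*k^*}$ has been removed). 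For (ii), a vertex $v\in\WW_{m+1}\setminus\sigma^*$ is a descendant of a \emph{unique} $t_i\in\WW_m\setminus\sigma^*$ (a second parent would again close a cycle), and by (i) it was not itself a frontier vertex earlier, so it still has degree three in $\GG_m$; passing to $\GG_{m+1}$ deletes exactly the edge $\{v,t_i\}$, leaving $v$ with exactly two neighbours. This is precisely the invariant the construction assumes, so it propagates, and the same computation shows that each $\sigma^*$-vertex entering the frontier also has degree two in the next graph.

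For (iii) I would view the filtration as a decreasing chain $\GG_K\subset\GG_{K-1}\subset\cdots\subset\GG_1\subset\GG$, where $\GG_1$ is $\GG$ with the interior of $E_{j^*k^*}$ removed, $\GG_{m+1}$ is $\GG_m$ with the off-spine frontier $\WW_m\setminus\sigma^*$ and its incident edges deleted, and $\GG_K$ is $\sigma^*$ together with the components of $\GG\setminus\sigma^*$ other than $\GG^*$. The tower is then built in the increasing direction. I take the initial segment $\GG'_1,\dots,\GG'_M=\GG_K$ to be a tower building $\GG_K$ from the single full time path $\sigma^*$, which exists once $\GG_K$ is known to be NCC, by Theorem \ref{t:s25.1}(i). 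The heart of the argument is the passage from $\GG_{m+1}$ up to $\GG_m$: by (ii) each restored $t_i\in\WW_m\setminus\sigma^*$ has degree two in $\GG_m$, so it is an interior point $a\to t_i\to b$ of an arc whose endpoints $a<t_i<b$ are themselves degree-two (interior) points of $\GG_{m+1}$. Restoring $t_i$ is therefore exactly one move of Definition \ref{def:ind}(ii): subdivide the edges of $\GG_{m+1}$ carrying $a$ and $b$ (with $a$ and $b$ serving as the new vertices $\V_j,\V_k$) and insert the arc through $t_i$ as the new edge $E_{jk}$, the parallel-edge case $n_1=n_2$ occurring exactly when $a$ and $b$ lie on a common edge. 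By (i) the children across one layer are distinct and new, so the moves within a layer neither interfere nor depend on their order, and the final move from $\GG_1$ to $\GG$ simply reinserts $E_{j^*k^*}$ between the degree-two vertices $t_{j^*},t_{k^*}$.

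The main obstacle is proving that every such move is \emph{legal}, i.e. that the required time path through the two subdivided edges exists and that no minimal co-terminal cell is created, so that the graph obtained remains NCC$^*$; I expect the tree structure of $\GG^*$ together with the global NCC property of $\GG$ to be indispensable here, since the remark following Theorem \ref{t:s25.1} shows that mere nesting of NCC-graphs need not yield a common tower. Concretely, the cell closed off by the arc $a\to t_i\to b$ pairs it with the existing $a$-to-$b$ path routed through $\sigma^*$, and the acyclicity of $\GG^*$ should rule out any competing co-terminal cell at the same endpoints; the companion point, that $\GG_K$ is NCC, reduces to checking that deleting the pendant tree $\GG^*$ (and $E_{j^*k^*}$) from $\GG$ introduces no minimal co-terminal pair, which I regard as the most delicate verification and where I would spend the most care. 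Granting these legality checks, concatenating the initial tower for $\GG_K$ with the layer-by-layer moves up to $\GG$ yields a single tower of NCC-graphs passing through every $\GG_m$, establishing (iii).
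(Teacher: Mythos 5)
Your parts (i) and (ii) are correct and essentially identical to the paper's own arguments: in both cases one closes a loop inside $\GG^*$ (using paths through earlier layers $\WW_1,\dots,\WW_{m-1}$, or a doubly-parented vertex) and contradicts the tree hypothesis. Your plan for (iii) is also the paper's plan: prove $\GG_K$ is NCC, invoke Theorem \ref{t:s25.1}(i) to get a tower $(\GG'_1,\dots,\GG'_M,\GG_K)$ from a single full time path, reverse the peeling one vertex at a time so that each restoration is a single move of Definition \ref{def:ind}(ii), and concatenate. The genuine gap is that you never prove the one statement carrying the weight of part (iii): that $\GG_K$ is NCC. You explicitly defer it (``the most delicate verification and where I would spend the most care'') and then argue ``granting these legality checks''; without it there is no initial tower to concatenate, so your proof is conditional. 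The paper closes this point with a short but essential argument: if $\GG_K$ contained minimal co-terminal cells $(\sigma_1,\sigma_2)$ and $(\sigma_3,\sigma_4)$, then, because the deleted set $A$ (the connected component of $\RR(\GG)\setminus\sigma^*$ containing $t^*$) attaches to the rest of the graph only along $\sigma^*$, these cells remain minimal co-terminal cells after $A$ is re-attached; hence $\GG$ itself would contain minimal co-terminal cells, contradicting the hypothesis that $\GG$ is NCC. This persistence-of-minimality argument is the missing idea; your suggestion that ``acyclicity of $\GG^*$ should rule out any competing co-terminal cell'' points at the wrong mechanism, since what must be shown is that a competing cell in $\GG$ would have to enter and leave $A$ through vertices of $\sigma^*$ and therefore cannot beat a cell already present in $\GG_K$.

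A second, smaller issue: your step ``each restored $t_i$ is an interior point $a\to t_i\to b$ of an arc with $a<t_i<b$'' is asserted, not proved. Nothing in Definition \ref{def:spine} visibly forces the two remaining neighbours of an off-spine vertex to lie on opposite sides of it in time; if they did not, the restored wedge would not be a time path, the restoration would not be a legal move of Definition \ref{def:ind}(ii), and the intermediate graph would not even be a TLG (it would have a degree-two local extremum). To be fair, the paper is equally terse here (``we can reverse the construction \dots\ adding edges one at a time''), so this is a gap you share with the paper rather than one you introduced. Finally, note that your requirement ``no minimal co-terminal cell is created, so that the graph obtained remains NCC$^*$'' needs no separate check: once $\GG_K$ is NCC (hence NCC$^*$) and every move conforms to Definition \ref{def:ind}(ii), every graph produced is NCC$^*$ by definition and NCC by Theorem \ref{t:s25.1}(i). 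The legality issues are exactly two: the monotonicity of each restored arc, and the existence of a time path through the two subdivided edges; everything else is automatic.
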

\begin{pf}
To see (i), consider two vertices $t_1 < t_2$ in $\WW_{m}$.
Note that there is always a path of the form $(t_1, u_1, \ldots,
u_k, t_2)$ such that $\{u_1, \ldots, u_k\}$ is in
$\bigcup_{i=1}^{m-1}\WW_i$. If $t_1$ and $t_2$ are neighbors,
then that creates a loop in the graph $\GG^*$ in Definition
\ref{def:spine}. Since we have assumed the graph $\GG^*$ to be a
tree, this is impossible.

For (ii), note that, any $v\in\WW_{m+1}$ which is not in
$\sigma^*$ is a neighbor to some distinct vertex in $\WW_m$ and
that edge has been deleted in $\GG_{m+1}$. Since the degree of
every nonterminal vertex is three, it remains to show that $v$
cannot be a neighbor to two (or three) vertices in $\WW_m$.

Assume on the contrary that there is a vertex $v\in\WW_{m+1}$
which is a neighbor of both $u_1 < u_2$, where $u_1, u_2 \in
\WW_m$. Since $v \notin\sigma^*$, this produces another loop in
the graph $\GG^*$ the possibility of which has been ruled out by
our assumption.

(iii)
Let $A$ be the connected component of $\RR(\GG) \setminus\sigma^*$
that contains $t^*$. Then
$\RR(\GG) \setminus A = \RR(\GG_K)$, by construction. We can reverse
the construction presented before the lemma based on deleting edges. In
the reversed construction we add edges one at a time, not in batches,
to obtain a tower of graphs
$(\GG_K,\ldots, \GG_{K-1},\ldots, \GG_{K-2},\ldots, \GG_1, \GG)$.
Every graph $\GG_m$, $m = 1,\ldots, K$, is an element of this tower of
graphs, but $\GG_m$'s are not necessarily consecutive elements.\vadjust{\goodbreak}

We will argue that $\GG_K$ is an NCC-graph. Suppose that $\GG_K$ is not
an NCC graph. Then, according to Definition~\ref{def:nci}(v), there
are minimal co-terminal cells
$(\sigma_1, \sigma_2)$ and $(\sigma_3,\sigma_4)$ in $\GG_K$. Since $A$
is a connected component of \mbox{$\RR(\GG) \setminus\sigma^*$}, it is easy
to see that both cells
$(\sigma_1, \sigma_2)$ and $(\sigma_3,\sigma_4)$ will stay minimal if
we add $A$ to $\GG_K$. Hence, these cells will be minimal co-terminal
cells in $\GG$. This contradicts the assumption that $\GG$ is NCC and
finishes the proof that $\GG_K$ is NCC. Hence, there exists a tower of
NCC graphs
$(\GG'_1, \GG'_2,\ldots, \GG'_M, \GG_K)$, where $\GG'_1$ contains only
one full time path. We can concatenate this tower and $(\GG_K,\ldots,
\GG_{K-1},\ldots, \GG_{K-2},\ldots, \GG_1, \GG)$ to obtain a single
tower of NCC graphs
$(\GG'_1, \GG'_2,\ldots, \GG'_M, \GG_K,\ldots, \GG_{K-1},\ldots, \GG
_{K-2},\ldots, \GG_1, \GG)$.
\end{pf}
\begin{prop}\label{prop:mgle} Let $\GG$ be a TLG with a full
time path $\sigma^*$ that is a support for a time point $t \in
E_{j^*k^*}$. Let $X$ be a natural $\PP$-Markovian harness on
$\GG$.

Let $\{\beta(u), u \ge0\}$ be a one-dimensional Brownian
motion independent of the $\PP$-harness $X$, with $\beta(0) = t$. We
define the
sequence of stopping times
\[
\sigma_1 = \inf\bigl\{ u \ge
0\dvtx\beta(u) \in\{ t_{j*}, t_{k*} \} \bigr\},
\]
and then
inductively,
\[
\sigma_{m+1} = \inf\{ u\ge\sigma_m\dvtx
\beta(u) \in\NN( \beta({\sigma_m}) ) \}.
\]

Then, for any $m=1,2,\ldots, K$, we get
\[
\E(
X_{j^*k^*}(t) \mid\HH_m )= \E_{\beta}[ X(
\beta({\sigma_m}) )].
\]
Here $\E_{\beta}$ is the
expectation with respect to the law of $\beta$, when the values
of the process $X$ are given.
\end{prop}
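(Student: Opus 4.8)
The plan is to prove the identity $\E(X_{j^*k^*}(t) \mid \HH_m) = \E_\beta[X(\beta(\sigma_m))]$ by induction on $m$, exploiting the harness property \eqref{eq:hness2} together with the cell-structure encoded in the backward filtration $\HH_K \subset \cdots \subset \HH_1$. The base case $m=1$ is the crucial computation. Here $\WW_1 = \{t_{j^*}, t_{k^*}\}$ and $\HH_1$ is generated by all of $X$ on $\GG_1 = \GG \setminus E_{j^*k^*}$, so in particular $\HH_1$ contains $X(t_{j^*})$ and $X(t_{k^*})$. Since $\sigma^*$ is a support for $t^*$, the component of $\RR(\GG)\setminus\sigma^*$ containing $t^*$ is a tree, and by Lemma \ref{a8.1}(iii) the edge $E_{j^*k^*}$ may be added last in a tower of NCC-graphs; by the uniqueness and cell-Markovian structure of the natural process (Theorem \ref{th:nat}(i)), conditionally on $X(t_{j^*})$ and $X(t_{k^*})$ the harness on $E_{j^*k^*}$ is a Markov bridge independent of $\HH_1$. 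Applying the harness reformulation \eqref{eq:hness2} to the bridge between times $t_{j^*}$ and $t_{k^*}$ gives
\[
\E(X_{j^*k^*}(t) \mid \HH_1) = \frac{t-t_{j^*}}{t_{k^*}-t_{j^*}} X(t_{k^*}) + \frac{t_{k^*}-t}{t_{k^*}-t_{j^*}} X(t_{j^*}),
\]
which is exactly $\E_\beta[X(\beta(\sigma_1))]$, since a Brownian motion started at $t$ hits $t_{k^*}$ before $t_{j^*}$ with the gambler's-ruin probability $(t-t_{j^*})/(t_{k^*}-t_{j^*})$.

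For the inductive step, I would assume $\E(X_{j^*k^*}(t)\mid \HH_m) = \E_\beta[X(\beta(\sigma_m))]$ and pass to $\HH_{m+1} \subset \HH_m$. Because $\HH_{m+1}$ is coarser, I would compute $\E(X_{j^*k^*}(t)\mid\HH_{m+1})$ by taking a further conditional expectation: $\E(\cdot\mid\HH_{m+1}) = \E(\E(\cdot\mid\HH_m)\mid\HH_{m+1}) = \E(\E_\beta[X(\beta(\sigma_m))]\mid\HH_{m+1})$. The point is that each vertex $v \in \WW_m$ that is not on $\sigma^*$ has, by Lemma \ref{a8.1}(ii), exactly two neighbors in $\GG_m$ forming its descendant set $\NN(v)$, and these two neighbors lie at times straddling $v$ on the graph $\GG_m$. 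Passing from $\HH_m$ to $\HH_{m+1}$ removes the information at such a vertex $v$ while retaining the information at its two descendants; applying \eqref{eq:hness2} once more expresses $\E(X(v)\mid\HH_{m+1})$ as the linear interpolation of $X$ at the two descendants of $v$. This is precisely the one-step transition of the random walk $\beta(\sigma_m) \to \beta(\sigma_{m+1})$: from $v = \beta(\sigma_m)$, Brownian motion next reaches one of the two points of $\NN(v)$ with the gambler's-ruin probabilities, and $\E_\beta$ averages over exactly this choice.

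The linearity of the harness interpolation is what makes the argument close cleanly: since $\E_\beta[X(\beta(\sigma_m))]$ is an $\HH_m$-measurable linear combination of values $X(v)$, $v\in\WW_m$, and conditioning each such $X(v)$ on $\HH_{m+1}$ replaces it (for $v\notin\sigma^*$) by the harness-interpolated value at $\NN(v)$ while leaving $X(v)$ unchanged for $v\in\sigma^*$ (where $\NN(v)=\{v\}$ and $X(v)$ is already $\HH_{m+1}$-measurable), the two random-walk steps and the two harness applications match term by term. I expect the main obstacle to be justifying that the harness relation \eqref{eq:hness2} may be applied at the intermediate vertex $v$ with respect to the filtration $\HH_{m+1}$ rather than the original past-future filtration $\ppf_{t,T}$: one must verify that conditioning on $X$ along $\GG_{m+1}$ plays the role of conditioning on the past-and-future of $v$ relative to the two descendants in $\NN(v)$. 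This is where Lemma \ref{a8.1}(i)--(ii) and the tree structure of $\GG^*$ are essential, since they guarantee that the two descendants genuinely separate $v$ from the rest of $\GG_{m+1}$, so that $\HH_{m+1}$ contains the relevant past-future $\sigma$-algebra and the harness identity applies to the natural process restricted to the relevant sub-tree. Once this separation is established, the Markov/harness property transfers to $\HH_{m+1}$ and the induction proceeds.
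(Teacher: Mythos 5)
Your proposal is correct and follows essentially the same route as the paper: induction on $m$, with the base case from the graph-Markovian property plus the harness interpolation \eqref{eq:hness2} identified with gambler's-ruin probabilities, and the inductive step via the tower property, splitting vertices of $\WW_m$ according to whether they lie on $\sigma^*$, and replacing $X(t_i)$ by its harness interpolation at the two descendants. The obstacle you flag at the end is exactly the one the paper resolves, and it does so the way you suggest: Lemma \ref{a8.1}(iii) together with Remark \ref{m25.1}(iv) shows the restriction of $X$ to $\GG_m$ is itself a natural $\PP$-process, so its graph-Markovian property reduces conditioning on $\HH_{m+1}$ to conditioning on $X(v_1),X(v_2)$, after which the harness identity applies.
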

\begin{pf} Consider the case of $m=1$. By the graph-Markovian
property of the process $X$, it is clear that
\[
\E(
X_{j^*k^*}(t) \mid\HH_1 )= \E( X_{j^* k^*} \mid
X(t_{j^*}), X(t_{k^*}) ).
\]
Now, applying the harness
property (\ref{eq:hness2}), we get
\begin{eqnarray*}
\E(
X_{j^*k^*}(t) \mid\HH_1 )&=&
\frac{t-t_{j^*}}{t_{k^*}-t_{j^*}} X(t_{k^*}) +
\frac{t_{k^*}-t}{t_{k^*}-t_{j^*}} X(t_{j^*})\\
&=&\PPP\bigl(
\beta( \sigma_1 )= t_{k^*} \bigr)X(t_{k^*}) +
\PPP\bigl( \beta( \sigma_1 )= t_{j^*}
\bigr)X(t_{j^*})\\
&=& \E_{\beta}[ X( \beta(\sigma_1)
)].
\end{eqnarray*}

We now proceed by induction. Suppose that
%
%
\begin{equation}\label{eq:indmgle}
\E( X_{j^*k^*}(t) \mid\HH_m )= \sum_{t_i \in
\WW_{m}} \PPP\bigl( \beta( \sigma_m)= t_i \bigr)
X(t_i).
\end{equation}
Then, by the the tower property of conditional
expectations, we get
%
%
\begin{equation}\label{eq:tower} \E( X_{j^*k^*}(t)
\mid\HH_{m+1} )= \sum_{t_i \in\WW_{m}} \PPP\bigl(
\beta( \sigma_m)= t_i \bigr) \E( X(t_i) \mid
\HH_{m+1}).
\end{equation}

Now there are two cases to consider. First suppose that $t_i$ is
in the fixed full time path $\sigma^*$, in which case it is
measurable with respect to $\HH_{m+1}$, and thus $\E(
X(t_i) \mid\HH_{m+1})= X(t_i)$.

The other case is when $t_i \notin\sigma^*$. Note that, since
the degree of the vertex $t_i$ is exactly two in the graph
$\GG_m$, there are two vertices $v_1$ and $v_2$ such that if we
remove these two vertices, $t_i$ is disconnected from the rest
of the graph.
By Lemma~\ref{a8.1}(iii) and Remark~\ref{m25.1}(iv), the restriction
of $X$ to $\GG_m$ is a natural $\PP$-process.
Thus, from the graph-Markovian property of $X$ on $\GG_m$ and harness
property
\begin{eqnarray*}
\E( X(t_i) \mid\HH_{m+1} ) &=& \E(
X(t_{i})\mid X_{v_1}, X_{v_2} )\\
&=& \PPP\bigl( \beta(
\sigma_{m+1} )= t_{v_1} \mid\beta(\sigma_m)=t_i \bigr)
X(t_{v_1}) \\
&&{}+ \PPP\bigl( \beta( \sigma_{m+1} )= t_{v_2}
\mid\beta(\sigma_m)=t_i \bigr)X(t_{v_2}).
\end{eqnarray*}
Substituting this expression back in (\ref{eq:tower}) and
(\ref{eq:indmgle}) we get
\[
\E( X_{j^*k^*}(t) \mid
\HH_{m+1} )= \sum_{t_i \in\WW_{m+1}} \PPP\bigl( \beta
(
\sigma_{m+1})= t_i \bigr) X(t_i)=
\E_{\beta}[X(\beta(\sigma_{m+1}))].
\]
This completes
the proof of the proposition.
\end{pf}
\begin{theorem}\label{prop:skorokhod} Let $\mu$ be any
probability distribution on $[0,1]$. Let $Y$ be a Markovian
harness with law $\PP$.

\mbox{}\hphantom{i}\textup{(i)} Given any $\varepsilon>0$ and any metric $\rho$ which induces
the topology of weak convergence, it is possible construct a NCC
TLG $\GG$, a time point $t^* \in E_{j^*k^*}$, and a full time
path $\sigma^*=\sigma(t_{k_1}, t_{k_2}, \ldots, t_{k_n})$ such
that for a natural harness $X$ on $\GG$ with law~$\PP$, the
difference between the laws of the random variables
\[
E(
X_{j^*k^*}(t^*) \mid X_E(s), 0\le s \le1, E\in\sigma^*
) \quad\mbox{and}\quad \int_0^1 Y(s) \mu(ds)
\]
is less
than $\varepsilon$ in the metric $\rho$.

\textup{(ii)} If $\PP$ is the Wiener measure on $[0,1]$, it follows that
for any time point $u\in\sigma^*$, one can make the difference
between
%
%
\begin{equation}
\label{eq:427} E( X_{j*k*}(t^*) X_{\sigma^*}(u) )
\quad\mbox{and}\quad \int_0^u s \mu(ds) + u\mu(u,1]
\end{equation}
smaller than
$\varepsilon$.
\end{theorem}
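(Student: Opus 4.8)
Applying Proposition \ref{prop:mgle} with $m=K$ is the natural starting point. At the final step every vertex of $\WW_K$ lies on $\sigma^*$, so that $\sigma\{X_E(s): E\in\sigma^*\}\subseteq\HH_K$ and the $\HH_K$-conditional expectation produced by the proposition is already measurable with respect to $\sigma\{X_E(s): E\in\sigma^*\}$; hence it coincides with the conditional expectation appearing in the theorem, and
\[
E\left( X_{j^*k^*}(t^*) \mid X_E(s),\; 0\le s\le 1,\; E\in\sigma^* \right) = \sum_{t_i\in\WW_K} \P\left(\beta(\sigma_K)=t_i\right) X_{\sigma^*}(t_i) = \int_{[0,1]} X_{\sigma^*}(s)\,\nu(ds),
\]
where $\nu$ is the law of the time-coordinate of $\beta(\sigma_K)$, a finitely supported measure carried by the vertices of $\sigma^*$. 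Two features drive the argument. First, along the full time path $\sigma^*$ the process has law $\PP$, so $X_{\sigma^*}\stackrel{d}{=}Y$. Second, $\beta$ is a one-dimensional Brownian motion with $\beta(0)=t^*$ and $\beta(\sigma_K)\in[0,1]$, so optional stopping forces $\int s\,\nu(ds)=t^*$: the exit law always has barycenter $t^*$. I would therefore set $t^*=\ol\mu:=\int_0^1 s\,\mu(ds)$ (the degenerate cases $\ol\mu\in\{0,1\}$ being handled directly), which is precisely the matching constraint that makes it possible for $\nu$ to approximate $\mu$.

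With $t^*$ fixed, part (i) becomes an embedding problem: given $\epsilon>0$, construct an NCC TLG $\GG$, an edge $E_{j^*k^*}\ni t^*$, and a supporting full path $\sigma^*$ whose exit law $\nu$ is weakly within $\epsilon$ of $\mu$. The plan is to use a barycentric binary split tree. First approximate $\mu$ weakly by a finitely supported $\nu=\sum_i p_i\delta_{y_i}$, $y_1<\cdots<y_r$, having barycenter exactly $\ol\mu$ (a routine discretize-and-correct construction). Then build an ordered binary tree whose root sits at $\ol\mu=t^*$ and in which each internal node at position $x$, spanning a block of consecutive $y_i$'s, splits into two children placed at the weighted barycenters $x_L<x<x_R$ of the two sub-blocks. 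The martingale-split probabilities $\tfrac{x_R-x}{x_R-x_L}$ and $\tfrac{x-x_L}{x_R-x_L}$ dictated by the harness identity \eqref{eq:hness2} then reproduce the leaf weights $p_i$ exactly, so the exit law of the tree is precisely $\nu$. Because the tree is ordered, the leaf-ranges of distinct subtrees are laminar, so it can be drawn with strictly time-monotone edges as a planar graph hanging off a spine; taking $\sigma^*$ to be that spine (the full path through $0,y_1,\dots,y_r,1$) and gluing each leaf to its spine vertex yields a planar, hence NCC, TLG by Theorem \ref{t:s25.1}(ii). Removing $\sigma^*$ leaves exactly this tree as the connected component of $t^*$, so $\sigma^*$ is a support in the sense of Definition \ref{def:spine}, and the peeling $\{\WW_m\}$ of Lemma \ref{a8.1} reproduces the splits.

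To close (i) I would appeal to Lemma \ref{lem:propharness}: since $\{Y(s)\}$ is uniformly integrable and continuous in probability, any weakly convergent sequence $\nu_n\to\mu$ gives $\int Y\,d\nu_n\to\int Y\,d\mu$ in $L^1$, hence in law; as $X_{\sigma^*}\stackrel{d}{=}Y$, the law of $\int X_{\sigma^*}\,d\nu$ is then within $\epsilon$ in $\rho$ of the law of $\int_0^1 Y\,d\mu$ once $\nu$ is close enough to $\mu$. For part (ii), with $\PP$ the Wiener measure, $X_{\sigma^*}=W$ is standard Brownian motion and $X_{\sigma^*}(u)$ is $\sigma\{X_E(s):E\in\sigma^*\}$-measurable for $u\in\sigma^*$; the tower property together with the display above gives
\[
E\left( X_{j^*k^*}(t^*)\,X_{\sigma^*}(u)\right) = \int_{[0,1]} E\!\left[W(s)W(u)\right]\nu(ds) = \int_{[0,1]}\min(s,u)\,\nu(ds) = \int_{[0,u]} s\,\nu(ds) + u\,\nu(u,1].
\]
Since $s\mapsto\min(s,u)$ is bounded, continuous, and equi-Lipschitz in $u$, weak convergence $\nu\to\mu$ drives the right-hand side to $\int_0^u s\,\mu(ds)+u\,\mu(u,1]$ uniformly in $u$, which is exactly \eqref{eq:427}.

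The two weak-convergence limits and the covariance computation are routine once Lemma \ref{lem:propharness} is in hand. The main obstacle is the middle step: converting the abstract barycentric split tree into a genuine TLG satisfying every clause of the definition (all interior vertices of degree three, strictly time-increasing edges, and distinct vertex times after a harmless perturbation) and for which $\sigma^*$ is truly a support, i.e.\ $\GG^*$ is a tree. The laminar, planar structure of the ordered tree is what makes both the NCC property (via Theorem \ref{t:s25.1}(ii)) and the support property (via Lemma \ref{a8.1}) fall out simultaneously, and the accompanying point requiring care is the weak density of the realizable exit laws among all probability measures on $[0,1]$ with barycenter $t^*$.
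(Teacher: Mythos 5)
Your proposal is correct, and at its core it is the same argument as the paper's: your ``barycentric binary split tree'' is precisely Dubins' solution of the Skorokhod embedding problem, which is exactly what the paper uses (citing \cite{D68}), and both proofs funnel through Proposition \ref{prop:mgle} applied to a splitting tree glued onto a spine $\sigma^*$, with $t^*=\E(\mu)$ placed at the (deleted) root so that it lies in the interior of $E_{j^*k^*}$ --- that deletion, which the paper performs explicitly, is also the one-line fix for the degree-three issue you flag at the root. The differences are in execution, and they cut both ways. First, you discretize $\mu$ to a finitely supported $\nu$ with matching barycenter and embed $\nu$ \emph{exactly} in a finite tree, whereas the paper embeds $\mu$ itself and truncates the Dubins tree at level $N$; the price of your ordering is that the assertion ``$\nu_n\to\mu$ weakly implies $\int Y\,d\nu_n\to\int Y\,d\mu$ in $L^1$'' does not follow from Lemma \ref{lem:propharness} alone: you need a coupling of $\nu_n$ with $\mu$ (e.g.\ Skorokhod representation, or $\nu_n$ given by conditioning on dyadic partitions so martingale convergence applies), independence of that coupling from $Y$, and the a.s.\ continuity of $Y$ at each fixed time from Lemma \ref{lem:propharness}(ii). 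The paper gets this coupling for free, since in Dubins' construction $\beta(\tau_n)\to\beta(\tau)$ almost surely on a single probability space; this is the one step where your sketch under-reports the work needed, though it is routine. Second, your part (ii) is genuinely different and more elementary: you compute the covariance exactly as $\int \min(s,u)\,\nu(ds)$ via the tower property and then invoke weak convergence against the bounded Lipschitz function $\min(\cdot,u)$ (which even yields uniformity in $u$), whereas the paper upgrades weak convergence to $\mathbb{L}^2$ convergence using Gaussianity and finishes with Cauchy--Schwarz. Third, your explicit appeal to planarity and Theorem \ref{t:s25.1}(ii) to certify that the glued graph is NCC, and your observation that optional stopping forces the barycenter constraint $\int s\,\nu(ds)=t^*$ (hence the choice $t^*=\E(\mu)$), make explicit two points the paper leaves implicit; these are small but genuine improvements in completeness.
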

\begin{pf} We use Dubins's solution to the Skorokhod embedding
problem. Please see the original article by Dubins \cite{D68}
for more details, or page 332 in the survey article by Ob\l\'oj
\cite{O04} (which treats the case when $\mu$ is continuous).\vadjust{\goodbreak}

Given a measure $\mu$ with support in $[0,1]$, the Skorokhod
problem asks for a stopping time $\tau$ with respect to the
Brownian filtration such that a standard one-dimensional
Brownian motion $\beta$ stopped at $\tau$ has law $\mu$. The
following is a solution proposed by Lester Dubins.

Consider any probability measure $\nu$ supported on $[0,1]$. For
any finite sequence $s$ of $0$'s and $1$'s starting with $0$, we
will define a probability measure $\nu_s$. Let \mbox{$\nu_{(0)}=\nu$}.
Suppose that $s_1 = (s, 0)$ and $s_2 = (s, 1)$ (this notation is
not quite rigorous but it is quite clear). It will suffice to
define $\nu_{s_1}$ and $\nu_{s_2}$ as functions of $\nu_s$. If
$\nu_s$ is supported on exactly one point then we let $\nu_{s_1}
= \nu_{s_2} =\nu_s$. Otherwise we consider $\nu_s$ restricted to
intervals $[0, \E(\nu_s))$ and $[\E(\nu_s), 1]$. We renormalize
both measures and thus we obtain $\nu_{s_1}$ and $\nu_{s_2}$.

Let $H_n(\mu)$ be the set of all numbers $\E(\nu_s)$ for all
sequences $s$ of length $n+1$, $n\geq0$, where $\nu_{(0)}=\mu$.
The sequence $\{ H_n(\mu), n=0,1,2,\ldots\}$ can be
naturally represented as a tree where every vertex has two
descendants unless it is a vertex that is repeated forever.

Let $\beta$ denote a one-dimensional Brownian motion such that
$\beta(0)=\E(\mu)$. Define $\tau_0\equiv0$ and define the
successive stopping times
\[
\tau_{n+1} = \inf\{ t\ge
\tau_n\dvtx\beta(t) \in H_{n+1}(\mu) \},\qquad
n=0,1,2,\ldots.
\]
Then Dubins shows that the distribution
$\mu_n$ of $\beta(\tau_n)$ is supported on at most $2^n$ many
atoms, and moreover $\mu_n$ converges to $\mu$ weakly as $n$
tends to infinity.

We will later show that
%
%
\begin{equation}\label{eq:n29.2}
\E_{\beta}( Y(\beta(\tau_n)) ) \to\int Y(s) \,d\mu(s)
\end{equation}
weakly as $n$ tends to infinity.

Assuming that (\ref{eq:n29.2}) is true, for any $\varepsilon>0$,
there exist a large enough $N$ such that the $\rho$-distance between
$\int Y(s) \,d\mu_n(s)$ and $\int Y(s) \,d\mu(s)$ is smaller than
$\varepsilon$. This is enough to prove part (i) of the proposition
since we can construct a tree $\tree_N$ with vertices
$\bigcup_{i=0}^N H_i(\mu)$ with an obvious tree structure. We
add to $\tree_N$ a full time path $\sigma^*$ by connecting
$\{0,1\}$ with all the elements in $H_N$. Finally we delete the
vertex at $\E(\mu)$ and name this time point $t^*$. Then
$\sigma^*$ is a support for the point $t^*$. This and
Proposition~\ref{prop:mgle} imply part (i) of the proposition.

For part (ii) we note that when $Y$ is Brownian motion, the weak convergence
(\ref{eq:n29.2}) entails convergence in $\mathbb{L}^2$. This is a
standard result
for linear combination of Gaussian processes that follows by
considering pointwise convergence of
the characteristic function. In other words, one can construct a tree
as above with an $N$
large enough such that the $\mathbb{L}^2$ distance between
\[
\E( X_{j*k*}(t) \mid\mathcal{H}_N ) \quad\mbox
{and}\quad
\int X_{\sigma*}(s) \,d\mu(s)
\]
is appropriately small. Now part (ii) follows by applying the
Cauchy--Schwarz inequality since the right-hand side of
(\ref{eq:427}) is the covariance between
\[
\int X_{\sigma*}(s) \,d\mu(s)\quad\mbox{and}\quad X_{\sigma*}(u).
\]

We return to the proof of (\ref{eq:n29.2}). It follows from
Dubins's construction that there is a limiting stopping time
$\tau$ such that $\lim_{n\rightarrow\infty} \tau_n = \tau$
almost surely, and $\lim_{n\rightarrow\infty}
\beta(\tau_n)=\beta(\tau)$, where $\beta(\tau)$ has law $\mu$.
Since $\beta$ is independent of $Y$ which is continuous in
probability (Lemma~\ref{lem:propharness}), it follows that
\[
\lim_{n\rightarrow\infty} Y(\beta(\tau_n))= Y(\beta(\tau))\qquad
\mbox{with probability one}.
\]
By Lemma~\ref{lem:propharness}
we know that $\{ Y(s), 0\le s \le1\}$ is uniformly
integrable, so the above shows that
\[
\lim_{n\rightarrow
\infty} \E_{\beta}( Y(\beta(\tau_n)) )= \E_{\beta}
Y(\beta({\tau}))\qquad \mbox{with probability one}.
\]
This completes
the proof of the theorem.
\end{pf}

\section{Brownian motion on honeycomb graph}\label{sec:hcomb}

We will prove a limit theorem for natural Brownian motion on the
honeycomb graph, when the diameter of hexagonal cells goes to
zero.
We will use the term ``Brownian motion'' to denote the two-sided
Brownian motion on the real line conditioned to be equal to 0 at
time~0.

Let $\RR(\GG^*_\rho)$ consist of the boundary of a single
hexagon with diameter $\rho>0$, with two of its sides parallel
to the first axis, and the leftmost vertex at $(0,0)$. Let
$\RR(\GG_\rho)$ be the usual hexagonal lattice in the whole
plane, containing $\RR(\GG^*_\rho)$ as a subset. It is easy to
see that there exists a tower $\{\GG^n_\rho\}$ of NCC TLGs with the
limit $\GG_\rho$, satisfying Definition~\ref{m27.1}(ii) so
$\GG_\rho$ is NCC. Hence, there exists a natural Brownian motion
$X$ on $\GG_\rho$.

Recall from Theorem~\ref{m25.3} that the distribution of $X$
does not depend on the tower of NCC-graphs used in the inductive
construction. Images of elements of a tower of NCC-graphs under
the symmetry with respect to the horizontal axis form another
tower converging to $\GG_\rho$. The same can be said about
images under vertical shifts by the hexagonal cell height. This
implies that the natural Brownian motion $X$ on $\GG_\rho$ is
invariant under the symmetry with respect to the horizontal axis
and under vertical shifts.
\begin{theorem}\label{p:n14.1} Let $X$ be the natural Brownian
motion on $\GG_\rho$. Consider $(u,0), (v, x) \in\R^2$ with
$u,v,x>0$. Let $(u_\rho, 0_\rho)$ be one of the vertices in
$\RR(\GG_\rho)$ with the smallest distance to $(u,0)$, and let
$E^\rho_u$ be an edge of $\GG_\rho$ that contains this vertex.
Let $(v_\rho, x_\rho)$ be one of the vertices in $\RR(\GG_\rho)$
with the smallest distance to $(v,4 x/(\sqrt{3}\rho))$, and let
$E^\rho_v$ be an edge of $\GG_\rho$ that contains this vertex.
Let $\Phi$ be the standard normal\vadjust{\goodbreak} cumulative distribution
function, that is, $\Phi(a) = (1/\sqrt{2\pi}) \int_{-\infty}^a
e^{-s^2/2} \,ds$. Then
\begin{eqnarray*}
&&\lim_{\rho\to0}
\E(X_{E^\rho_u}(u_\rho) X_{E^\rho_v}(v_\rho)) \\
&&\qquad= \frac
{\sqrt{5x}} {8\sqrt{\pi}} \bigl( e^{-16(u+v)^2/(5x)} -
e^{-16(u-v)^2/(5x)}\bigr)\\
&&\qquad\quad{} - (1/2) (u-v) \bigl(2
\Phi\bigl(4(u-v)/\sqrt{5x}\bigr) -1\bigr) \\
&&\qquad\quad{} + (1/2) (u+v) \bigl(2
\Phi\bigl(4(u+v)/\sqrt{5x}\bigr)-1\bigr).
\end{eqnarray*}
\end{theorem}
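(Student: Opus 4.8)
The plan is to exploit that $X$ is a centered Gaussian field, reduce the computation of $\E(X_{E^\rho_u}(u_\rho)X_{E^\rho_v}(v_\rho))$ to a hitting problem for a descending random walk, and then pass to the scaling limit. Two exact facts anchor everything. First, every full time path of $\GG_\rho$ carries the law $\PP$ of two-sided Brownian motion pinned to $0$ at time $0$, so whenever two points $p,q$ lie on a common full time path at times $s,s'$ one has $\E(X(p)X(q))=\tfrac12(|s|+|s'|-|s-s'|)$; in particular $\var X_{E^\rho_u}(u_\rho)=u_\rho\to u$ and $\var X_{E^\rho_v}(v_\rho)=v_\rho\to v$. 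Second, by Theorem \ref{m25.3} the law of $X$ does not depend on the approximating tower and, as noted before the theorem, it is invariant under reflection in the horizontal axis and under vertical shifts. I would fix $a$ near $(u,0)$ on the central row and regard $b$ near $(v,4x/(\sqrt3\rho))$ as a high vertex, and condition on the $\sigma$-field generated by $X$ on the closed lower half of the lattice, whose interface is the central full time path.

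First I would produce a harmonic/random-walk representation of $\E(X(b)\mid\text{lower half})$. Since the central path separates $b$ from $a$, the cell- and graph-Markov properties (Remark \ref{m22.1}, Definition \ref{m23.1}) together with Proposition \ref{prop:mgle} show that this conditional expectation is a convex combination $\sum_z p_\rho(b,z)\,X(z)$ of the interface values, the weights $p_\rho(b,z)$ being the absorption law of the descending walk $\beta$ of Proposition \ref{prop:mgle} started at $b$. Hence $\E(X(a)X(b))=\sum_z p_\rho(b,z)\,\E(X(a)X(z))$, and because $a$ and each interface vertex $z$ lie on a common path one may substitute the pinned-Brownian kernel $\tfrac12(|u|+|s_z|-|u-s_z|)$. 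The whole problem is thereby reduced to identifying the scaling limit of the law of the signed absorption time $S_\rho$ of $\beta$, together with the number of rows it must cross.

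Next I would carry out the invariance principle. A hexagon of diameter $\rho$ has height $\sqrt3\rho/2$, so $b$ sits about $8x/(3\rho^2)$ rows above the central path, and each descent step of $\beta$ moves the horizontal coordinate by an amount of order $\rho$ with harness-determined increments that, by the reflection invariance, are asymptotically symmetric. A Donsker-type argument — using vertical-shift invariance to make the per-step increments stationary and the harness relation \eqref{eq:hness2} to compute their moments — shows that $S_\rho$ converges to a limiting law $\nu_{v,x}$ on the scale $\sqrt x$. I expect $\nu_{v,x}$ to be not a single Gaussian but a Gaussian mixture, because the vertical coordinate of $\beta$ is itself asymptotically diffusive so the number of horizontal steps before absorption is random; this is the origin of the two distinct widths in the final formula (the error-function and exponential terms carry Gaussian scales differing by a factor $\sqrt2$). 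Pairing $\nu_{v,x}$ against the kernel $\tfrac12(|u|+|s|-|u-s|)$, and using the reflection about the axis, which forces a method-of-images pairing of a contribution centred at $v$ with its mirror centred at $-v$, produces precisely the two families of terms in $u-v$ and $u+v$; evaluating the resulting Gaussian integrals (instances of $\E|m+N|$ and its primitive) gives the stated closed form, with the constant $5x$ emerging as the limiting variance.

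The main obstacle is exactly this invariance principle with its precise constants. One must control the joint law of the horizontal displacement of $\beta$ and of the random number of rows crossed before absorption — these are correlated through the oriented honeycomb geometry and the harness weights — and show that the limiting absorption law is the precise object whose pairing with the min-kernel reproduces both the constant $5x$ and both Gaussian scales; this is where the $\sqrt3$ cell geometry and the pinning on the vertical line $\{t=0\}$ must be tracked with care. Two subsidiary technical points are the passage from conditioning on the finite cutsets of Definition \ref{m23.1} to the infinite interface, which is handled by the same consistency and approximation arguments already used for Theorem \ref{m25.3}, and the uniform integrability needed to upgrade weak convergence of $\nu_{v,x}$ to convergence of the covariance, for which the uniform-integrability estimate of Lemma \ref{lem:propharness} together with the Gaussianity of $X$ suffices.
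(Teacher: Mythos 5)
Your proposal follows the same skeleton as the paper's argument: use Gaussianity to reduce everything to a covariance; condition the value at the high vertex on the process at and below the central row; represent that conditional expectation, via the harness identity \eqref{eq:hness2} and the graph-Markov property, as the absorption law of a walk that descends one row at a time; prove an invariance principle for that walk; and evaluate the limit by pairing the absorption law with the pinned kernel $u \wedge s$, using images across the vertical axis where $X$ vanishes. Up to that point you are aligned with the paper.

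The genuine gap is the structural claim at the heart of your invariance-principle step: that the vertical coordinate of the walk is itself asymptotically diffusive, so that the number of rows crossed before absorption is random and the limiting absorption law $\nu_{v,x}$ is a Gaussian mixture with two distinct widths. This is false, and a proof organized around it would fail. In the construction (one application of the tower property per row, exactly as in Proposition \ref{prop:mgle} and in the paper's induction), the walk moves down exactly one row per step; the row index is a deterministic clock, so the number of steps to reach the central row is the deterministic row index of the starting vertex, of order $x/\rho^2$. The only randomness is horizontal: the increments form a four-state Markov chain on $\{-3\rho/4,\,-\rho/4,\,\rho/4,\,3\rho/4\}$ whose steps are conditionally centered (martingale differences), and it is the stationary second moment of this chain, multiplied by the number of rows, that produces the variance constant in the theorem --- the reflection symmetry you invoke gives only centering, not the constant. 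Consequently the limit object is a single space-time Brownian motion $(W_s, x-s)$ killed on exiting the first quadrant, and the absorption law is one absorbed Gaussian, not a mixture. The evidence you cite for the mixture --- the factor $\sqrt{2}$ between the scales of the exponential and error-function terms in the displayed formula --- is a red herring: in the paper's own computation both families of terms arise from integrating $u \wedge s$ against a single Gaussian density and its mirror image, hence necessarily carry the same scale, so the mismatch in the display is a transcription slip rather than structure; moreover no mixture could generate such a mismatch either, since each mixture component contributes internally matched scales. A secondary gap: Proposition \ref{prop:mgle} cannot be invoked verbatim for your conditioning step, because the component of the honeycomb lattice above the central path is not a tree, so the support hypothesis fails; the paper instead restricts $X$ to a tower of finite NCC subgraphs bounded by two monotone paths through the high vertex, verifies (via Theorem \ref{m25.3} and Remark \ref{m25.1} (iv)) that these restrictions are natural, and runs the row-by-row tower-property induction there.
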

\begin{remark} The above formula can be slightly simplified, but
we leave it in the present form to show that the expression is
symmetric in $u$ and $v$. This is evident once we recall that
$2\Phi(a) -1$ is an odd function. Symmetry in $u$ and $v$ is
something that we expect because of the invariance of $X$ under
the symmetry with respect to the horizontal axis and invariance
under vertical shifts. Note that the formula does not depend
only on $|u-v|$. This is because $X$ is not invariant under
horizontal shifts. The reason is that $X(t) = 0$, a.s., for
every $t$ of the
form $(0, y) \in\RR(\GG_\rho)$; this is not true for any other vertex.
\end{remark}
\begin{pf*}{Proof of Theorem~\ref{p:n14.1}}
The core of our argument is based on the harness idea, just like
the arguments in Section~\ref{sec:harn}.

In this proof we will distinguish between points in the representation
of a graph and their projections on the real axis. So far, this
distinction was not very helpful, so it was ignored in most of the
paper. We will identify edges $E_{jk}$ with sets $E_{jk} ([t_j, t_k])
\subset\R^2$. Recall the following convention introduced after
Definition~\ref{def:2.4}:
$\ol t_j = (t_j , E_{jk}(t_j))$. As a first application of this
notation, we write $\ol v_\rho= (v_\rho, x_\rho)$ and $\ol u_\rho=
(u_\rho, 0_\rho)$. The meaning of $X(\ol t)$ for $\ol t \in\GG_\rho$
is clear.

Recall that hexagonal cells in $\RR(\GG_\rho)$ have diameter
$\rho$, and one of the vertices is located at $(0,0)$. Let
$\rho_1 = \rho\sqrt{3}/4$.

We will construct a tower of finite NCC graphs.
Let $\Gamma_1 \subset\RR(\GG_\rho)$ be the graph of a nondecreasing
function with a starting point $(y_1,0)$ on the horizontal axis and
endpoint at $\ol v_\rho$. It is easy to check that this defines
$\Gamma
_1$ uniquely. Similarly, let $\Gamma_2 \subset\RR(\GG_\rho)$ be the
graph of a nonincreasing function with the starting point at $\ol
v_\rho$ and an endpoint $(y_2,0)$ on the horizontal axis. Let
$\Gamma_3 \subset\RR(\GG_\rho)$ be the graph of a function on the
interval $(-\infty, \infty)$ with values in $[-\rho_1, 0]$; such a
function is unique. Let $\Gamma_4 = \Gamma_1 \cup\Gamma_2 \cup
\Gamma
_3$ and note that $\RR(\GG_\rho) \setminus\Gamma_4$ has two unbounded
connected components, say, $\Gamma_5$ and $\Gamma_6$. Let $\Gamma_7 =
\RR(\GG_\rho) \setminus(\Gamma_5 \cup\Gamma_6)$.
It is easy to see that $\Gamma_7$ is a representation of a TLG $\GG_*$.

All vertices of $\RR(\GG_\rho)$ lie
on lines $L_j:=\{(t,x)\dvtx x= j \rho_1\}$. Let $\WW_j$ be the set
of all vertices of $\RR(\GG_\rho)$ that lie on $\bigcup_{n\leq
j} L_n$. Let $\GG_j=(\VV_j,\EE_j)$ be the graph obtained from
$\GG_*$ by deleting all vertices in $\RR(\GG_*)$ which are not in
$\WW
_j$ and
all corresponding edges.
Note that $\RR(\GG_j) \cap L_j \ne\varnothing$ but there are no vertices
in $\VV_j$ with representation in $L_j$.

It is easy to see that $\GG_j$ are elements of a tower of NCC graphs
that starts from a graph with a single full time path represented by
$\Gamma_3$ and ends with $\GG_*$. To construct such a tower, we add
edges, one at a time, at the top layer of $\GG_j$, until we obtain
$\GG_{j+1}$.
A similar idea can be used to continue the construction of the tower
beyond $\GG_*$, so that the union of all the graphs in the tower is
$\GG
_\rho$.
This construction, Theorem~\ref{m25.3} and Remark~\ref{m25.1}(iv) show
that the restriction of $X$ to $\GG_j$ is a natural Brownian motion on
$\GG_j$.

Note that $x_\rho$ is within distance $\rho_1$ of
$[\sqrt{3}x/(4\rho)]$. We define $j_*$ by $(v_\rho, x_\rho)\in
L_{j_*}$.
Let $\WW^-$ be the set of all vertices of the
form $(s, x) \in\RR(\GG_\rho)$ with $s\leq0$. Let $\HH_k$ be the
$\sigma$-field generated by $\{X_\sigma, \sigma\in\EE_k\}$ and
by $\{X(t), t\in\WW^-\}$. The family $\{\HH_k\}$ is a filtration, that
is, $\HH_k
\subset\HH_{k+1}$ for all $k$. Consider a vertex $\ol t$ of $\GG_*$
whose representation
belongs to $L_{k}$. Recall that $\ol t$ does not belong to $\VV_k$.
Hence, $\ol t$ is in the interior of an edge in $\EE_k$ connecting two
vertices in $\VV_{k}\cap L_{k-1}$. Let $\ol\NN(\ol t)$ denote the
set of
endpoints of this edge and let $\NN(\ol t) $ be the projection of
$\ol\NN(\ol t)$ on the time axis.

Let $\{\beta_u, u \ge0\}$ be a one-dimensional Brownian
motion independent of $X$, starting at $\beta_0 = v_\rho$. We
define a sequence of stopping times, starting with $\tau_0=0$
and
\[
\tau_1 = \inf\{ u \ge0\dvtx\beta_u \in\NN(
\ol v_\rho) \}.
\]
Let $\ol\beta(\tau_1)$ be
the point in $\ol\NN( \ol v_\rho)$ with the time
coordinate $\beta(\tau_1)$ and note that $\ol\beta(\tau_1) \in
L_{j_*-1}$.

If $\beta(\tau_m) \leq0$, then we let $\tau_{m+1} = \tau_m$.
Otherwise we let
\[
\tau_{m+1} = \inf\{ u\ge\tau_m\dvtx
\beta_u \in\NN( \ol\beta_{\tau_m} ) \}.
\]
Let
$\ol\beta(\tau_{m+1})$ be the point in $\ol\NN(
\ol\beta_{\tau_m} )$ with the time coordinate
$\beta(\tau_{m+1})$; then $\ol\beta(\tau_{m+1}) \in L_{j_*-m-1}$. It
follows from our definition of $\GG_*$ that $\ol\NN( \ol
\beta_{\tau_m} )\subset\RR(\GG_*)$. See Figure~\ref{fig3}.

%
\begin{figure}[b]

\includegraphics{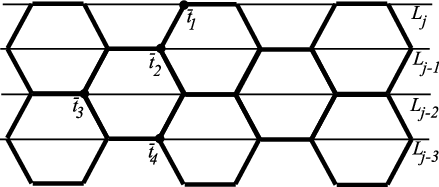}

\caption{The figure represents a fragment of the upper part of $\GG
_j$. The
points $\ol t_k$, $k=1,2,3,4$, represent a possible path of
$\ol\beta_{\tau_m}$, $m= j_*-j, j_*-j+1, \ldots.$}\label{fig3}
\end{figure}

Recall that for a random variable $Z$, $\E_\beta Z$ denotes the
expectation with respect to the law of $\beta$, that is, a
function of $X$.
We have shown that the restriction of $X$ to $\GG_j$ is a unique in law
natural Brownian motion on $\GG_j$.

Let $\ol\NN(\ol v_\rho) = \{\ol t_1,\ol t_2\} \subset\WW_{j_*-1}$,
with $t_1 < t_2$. By the graph-Markovian property of $X$ on the graph
$\GG_{j_*-1}$,
\[
\E( X(\ol v_\rho) \mid\HH_{j_*-1}
) = \E( X(\ol v_\rho) \mid X(\ol t_1), X(\ol t_2) ).
\]
Applying the harness property (\ref{eq:hness2}), we get
\begin{eqnarray*}
\E( X(\ol v_\rho) \mid\HH_{j_*-1})
&=& \frac{v_\rho-t_1}{t_2 - t_1} X(\ol t_2) +
\frac{t_2-v_\rho}{t_2-t_1} X(\ol t_1)\\
&=&\PPP\bigl( \beta(
\tau_1 ) = t_2 \bigr)X(\ol t_2) + \PPP\bigl( \beta(
\tau_1
)= t_1 \bigr)X(\ol t_1) \\
&=&\PPP\bigl( \ol\beta(
\tau_1 ) = \ol t_2 \bigr)X(\ol t_2)
+ \PPP\bigl( \ol\beta( \tau_1
)= \ol t_1 \bigr)X(\ol t_1)
\\
&=& \E_\beta(X(
\ol\beta_{\tau_1} )).
\end{eqnarray*}
We now
proceed by induction. Suppose that
%
%
\begin{equation}\label{eq:n14.3}
\E(
X(\ol v_\rho) \mid\HH_{j_*-m} )= \sum_{\ol t_i \in
\WW_{j_*-m}\cup\WW^-} \PPP\bigl( \ol\beta( \tau_m
)= \ol t_i
\bigr) X(\ol t_i).
\end{equation}
Then, by the the tower property, we get
%
%
\begin{equation}\label{eq:n14.4}
\E( X(\ol v_\rho) \mid\HH_{j_*-m-1}
)= \sum_{\ol t_i \in\WW_{j_*-m} \cup\WW^-} \PPP\bigl(
\ol\beta( \tau_m)= \ol t_i \bigr) \E( X(\ol
t_i) \mid
\HH_{j_*-m-1}).\hspace*{-32pt}
\end{equation}
If $\ol t_i \in\WW^-$ then $\E(
X(\ol t_i) \mid\HH_{j_*-m-1}) = X(\ol t_i)$. If $\ol t_i \in
\WW_{j_*-m}\setminus\WW^-$ then let $\ol\NN(\ol t_i) =
\{\ol t_{i_1},\ol t_{i_2}\} \subset\WW_{j_*-m-1}$. From the
graph-Markovian property and
harness property applied to $X$ restricted to $\GG_{j_*-m}$,
\begin{eqnarray*}
\E( X(\ol t_i) \mid\HH_{j_*-m-1} ) &=&
\E( X(\ol t_{i})\mid X(\ol t_{i_1}), X(\ol t_{i_2}) )\\
&=&
\PPP\bigl( \ol\beta( \tau_{m+1} )= \ol t_{i_1} \mid
\ol\beta(\tau_m)=\ol t_i \bigr) X(\ol t_{i_1}) \\
&&{} + \PPP\bigl( \ol
\beta(\tau_{m+1} )=\ol t_{i_2} \mid\ol\beta(\tau_m)=\ol t_i
\bigr)X(\ol t_{i_2}).
\end{eqnarray*}
Substituting this expression
back in (\ref{eq:n14.3}) and (\ref{eq:n14.4}) we get
\[
\E(
X(\ol v_\rho) \mid\HH_{j_*-m-1} ) = \sum_{\ol t_i \in
\WW_{j_*-m-1}\cup\WW^-} \PPP\bigl( \ol\beta( \tau
_{m+1})=
\ol t_i \bigr) X(t_i) =
\E_\beta(X(\ol\beta_{\tau_{m+1}})).
\]
We are
interested in the case when $j_*-m-1 =0$, that is,
%
%
\begin{equation}\label{eq:n15.1} \E( X(\ol v_\rho) \mid
\HH_{0} ) = \E_\beta(X(\ol\beta_{\tau_{j_*}})).
\end{equation}

For integer $m\geq1$, let $\ol B_{m \rho^2} := \ol\beta_{\tau_{m}}$
and define $\ol B_s$ for other values of $s\geq0$ by $\ol B_s =
\ol B_{[s/\rho^2]\rho^2}$. Let $\ol C_{k} = \ol B_{(k+1)\rho^2} -
\ol B_{k\rho^2}$.
Let $B_s$ be the projection of $\ol B_s$ on the time axis, and
similarly, let $C_k$ be the projection of $\ol C_k$ on the time axis.
Let us ignore for the moment the possibility that
$\ol\beta$ hits $\WW^-$. The random variables $C_k$ are not
independent but they form a Markov chain. By~\cite{B},
Example 2, pa\-ge~167, and \cite{B}, Theorem 20.1, when $\rho\to0$, the
process $B_s =\sum_{k\leq[s/\rho^2]-1} C_{k}$ converges weakly
in the Skorokhod space to Brownian motion $W_s$ with a diffusion
coefficient~$\mathbf s$. We will next calculate $\mathbf s$.

The possible values of $C_k$'s are $-3\rho/4,-\rho/4, \rho/4$
and $3\rho/4$. If we list all states of $C_k$ in this order then
the transition matrix for this Markov chain is
\[
\pmatrix{
1/4 & 0 & 3/4 & 0 \cr1/4 & 0 & 3/4 &
0 \cr0 & 3/4 & 0 & 1/4 \cr0 & 3/4 & 0 & 1/4 }.
\]
The stationary distribution for $C_k$ is
$(1/8, 3/8, 3/8,1/8)$. Hence, in the stationary regime,
\begin{eqnarray*}
\E C_k^2 &=& (-3\rho/4)^2(1/8) + (-\rho/4)^2(3/8) +
(\rho/4)^2(3/8) + (3\rho/4)^2(1/8) \\
&=& 5\rho^2/32.
\end{eqnarray*}
Since the process $B_{k\rho^2}$ is a martingale, it follows that
\[
\var(B_{k\rho^2}-B_0) = \sum_{0\leq n\leq k-1}
\var C_n = \sum_{0\leq n\leq k-1} 5\rho^2/32 = k 5\rho^2/32.
\]
Hence, the diffusion coefficient $\mathbf s$ of $W_s$
is $\sqrt{5/32}$.

Note that although we suppressed $\rho$ in the notation for the process
$B_s$, the distribution of this process depends on $\rho$. Recall that
$\rho_1 = \rho\sqrt{3}/4$, and let $A^\rho_s =x_\rho-s\rho _1$ for
$s\geq0$. Heuristically, $\ol\beta_{\tau_m} = (B_{m\rho^2},A^\rho
_{m})$, $m\in\Z$, $m\geq0$, is a space--time discrete time Markov
chain, with the ``time'' $A^\rho_{m}$ running in the negative direction
along the second axis, starting from $x_\rho$, and the speed of
$\rho_1$ per one step. The ``space'' component $B_{m\rho^2}$ of this
process runs along the first axis, starting from~$v_\rho$. The
right-hand side of (\ref{eq:n15.1}) is evaluated by integrating the
values of $X$ with respect to the hitting distribution of $\WW_0 \cup
\WW^-$ by $(B_{m\rho^2},A^\rho_{m})$. Since $X(\ol t) =0$ for $\ol t$
of the form $(0,y)$, $u_\rho>0$ and $X(\ol\beta_{\tau_{j_*}})\leq0$, we
obtain from the graph-Markovian property $\E(X(\ol u_\rho)
X(\ol\beta_{\tau_{j_*}}) )=0$. For $\ol t_1, \ol t_2 \in L_0$,
$\E(X(\ol t_1) X(\ol t_2)) = t_1 \land t_2$.

Let $A_s =x-s$ for $s\geq0$. When $\rho\to0$, processes $(B_{s\rho
^2},\rho_1 A^\rho_{s})$, $s\geq0$, converge to space--time
Brownian motion $(W_s, A_s)$, $s\geq0$, with $(W_0,A_0) = (v,x)$,
stopped at the exit time from the first quadrant, with the ``time''
component $A_s$ running at
the standard speed and the spatial component having diffusion
coefficient ${\mathbf s} = \sqrt{5/32}$. Let $\tau^* $ be the exit time
from the first quadrant by $(W_s, A_s)$, and let $\tau^{**}$ be the
exit time from the upper half-plane.
Let $K$ be the vertical part of the boundary of the first quadrant.
Let $f$ be the real valued function defined on the boundary of the
first quadrant, with zero values on $K$ and such that $f(t,0) = u \land
t$ for $t\geq0$. Then, by weak convergence, and using (\ref
{eq:n15.1}), $\lim_{\rho\to0} \E(X(\ol u_\rho) X(\ol v_\rho)) =
\E
f(A_{\tau^*}, W_{\tau^*})$.

Let $\phi(s)$ be the density of normal random variable with mean
$v$ and variance $5x/32$, that is,
\[
\phi(s) = \frac1
{\sqrt{5 \pi x/16}} \exp\biggl(- \frac{(s-v)^2}{5x/16}\biggr).
\]
Then using the reflection principle at the hitting time of $K$ we obtain
\begin{eqnarray*}
&&\lim_{\rho\to0} \E(X(\ol u_\rho) X(\ol v_\rho)) \\
&&\qquad= \E f(A_{\tau^*},
W_{\tau^*}) \\
&&\qquad= \E f(A_{\tau^{**}}, W_{\tau^{**}})
- \E\bigl(f(A_{\tau^{**}}, W_{\tau^{**}}) \bone_{\{(A_{\tau^*},
W_{\tau
^*}) \in K\}} \bigr)\\
&&\qquad= \biggl(\int_0^u s\phi(s) \,ds +
\int_u^\infty u \phi(s) \,ds \biggr)
- \biggl(\int_{-u}^0
(-s)\phi(s) \,ds + \int_{-\infty}^{-u} u \phi(s) \,ds
\biggr)\\
&&\qquad= \int_{-u}^u s\phi(s) \,ds + \int_u^\infty
u \phi(s) \,ds - \int_{-\infty}^{-u} u \phi(s) \,ds\\
&&\qquad= \frac
{\sqrt{5x}} {8\sqrt{\pi}} \bigl( e^{-16(u+v)^2/(5x)} -
e^{-16(u-v)^2/(5x)}\bigr)\\
&&\qquad\quad{} - (1/2) (u-v) \bigl(2
\Phi\bigl(4(u-v)/\sqrt{5x}\bigr) -1\bigr)\\
&&\qquad\quad{} +(1/2) (u+v) \bigl(2
\Phi\bigl(4(u+v)/\sqrt{5x}\bigr)-1\bigr).
\end{eqnarray*}
\upqed\end{pf*}

\section*{Acknowledgment}
We are grateful to the referee for a very careful reading of the
original manuscript and very helpful suggestions for
improvement.

%

%
\printaddresses

\end{document}